\numberwithin{equation}{section}
\newtheorem{lemma}[equation]{Lemma}
\newtheorem{proposition}[equation]{Proposition}
\newtheorem{theorem}[equation]{Theorem}
\newtheorem{corollary}[equation]{Corollary}
\theoremstyle{definition}
\newtheorem{defprop}[equation]{Definition-Proposition}
\newtheorem{remark}[equation]{Remark}
\newtheorem{definition}[equation]{Definition}
\newtheorem{construction}[equation]{Construction}
\newtheorem{notation}[equation]{Notation}
\numberwithin{equation}{section}
\newcommand{\C}{{\mathbb C}}
\newcommand{\bD}{{\mathbb D}}
\newcommand{\bH}{{\mathbb H}}
\newcommand{\N}{{\mathbb N}}
\newcommand{\Q}{{\mathbb Q}}
\newcommand{\R}{{\mathbb R}}
\newcommand{\Z}{{\mathbb Z}}
\newcommand{\cL}{{\mathcal L}}
\newcommand{\cV}{{\mathcal V}}
\newcommand{\oL}{\ov{\cL}}
\newcommand{\cF}{{\mathcal F}}
\newcommand{\sC}{{\mathscr C}}
\newcommand{\sK}{{\mathscr K}}
\newcommand{\cK}{{\mathcal K}}
\newcommand{\fm}{{\mathfrak m}}
\newcommand{\ul}[1]{\underline{#1}}
\newcommand{\wt}[1]{\widetilde{#1}}
\newcommand{\ov}[1]{\overline{#1}}
\DeclareMathOperator{\Homm}{\mathcal{H}om}
\DeclareMathOperator{\Cone}{Cone}
\DeclareMathOperator{\Res}{Res}
\DeclareMathOperator{\res}{res}
\DeclareMathOperator{\Spec}{Spec}
\DeclareMathOperator{\coker}{coker}
\DeclareMathOperator{\Tot}{Tot}
\DeclareMathOperator{\Tors}{Tors}
\DeclareMathOperator{\Ext}{Ext}
\DeclareMathOperator{\Hom}{Hom}
\DeclareMathOperator{\Id}{Id}
\DeclareMathOperator{\Sym}{Sym}
\newcommand{\Alex}{H_i(U^f, \Q)}
\newcommand{\TorsH}{\Tors_R \Alex}
\newcommand{\pairing}{\langle\cdot{,}\cdot\rangle}
\author{Eva Elduque}
\address{Departamento de Matem\' aticas, Universidad Aut\' onoma de Madrid and ICMAT, 28049 Madrid, Spain}
\urladdr{https://matematicas.uam.es/~eva.elduque/}
\email{eva.elduque@uam.es}
\author{Mois\'es Herrad\'on Cueto}
\address{Departamento de Matem\' aticas, Universidad Aut\' onoma de Madrid, 28049 Madrid, Spain}
\urladdr{https://matematicas.uam.es/~moises.herradon/}
\email{moises.herradon@uam.es}
\keywords{infinite cyclic cover, Alexander module, mixed Hodge structure, thickened complex}
\subjclass[2020]{14C30, 14D07, 14F40, 14F45, 32S35, 32S40, 32S50, 32S55, 55N25,
 55N30}
\thanks{E. Elduque and M. Herradón Cueto %Pongo los nombres sólo para que el número del proyecto salte de línea :S:S:S:S::S
are partially supported by the Grant PID2022-138916NB-I00 funded by MCIN/AEI/ 10.13039/501100011033 and by ERDF A way of making Europe. E. Elduque is also partially supported by the Ramón y Cajal Grant RYC2021-031526-I funded by MCIN/AEI /10.13039/501100011033 and by the European Union NextGenerationEU/PRTR}
\title{Compatibility of Hodge theory on Alexander modules}
\begin{document}

\begin{abstract}
  Let \(U\) be a smooth connected complex algebraic variety, and let \(f\colon U\to \C^*\) be an algebraic map. To the pair \((U,f)\) one can associate an infinite cyclic cover \(U^f\), and (homology) Alexander modules are defined as the homology groups of this cover. In two recent works, the first of which is joint with Geske, Maxim and Wang, we developed two different ways to put a mixed Hodge structure on Alexander modules. Since they are not finite dimensional in general, each approach replaces the Alexander module by a different finite dimensional module: one of them takes the torsion submodule, the other takes finite dimensional quotients, and the constructions are not directly comparable. In this note, we show that both constructions are compatible, in the sense that the map from the torsion to the quotients is a mixed Hodge structure morphism.
\end{abstract}

\maketitle

\begin{center}\textit{Dedicated to Lauren\c{t}iu Paunescu and Alexander I. Suciu on the occasion of their 70\textsuperscript{th} birthdays.}\end{center}

	\tableofcontents

  \section{Introduction}

  The goal of this note is to establish the relation between two recent constructions of mixed Hodge structures (MHS) on Alexander modules. Let us start by defining Alexander modules. Let \(U\) be a smooth connected complex algebraic variety, and let \(f\colon U\to \C^*\) be an algebraic map. From this information, one can construct an infinite cyclic cover of \(U\), that we denote \(U^f\), by pulling back the exponential map \(\exp\colon \C\to \C^*\), i.e. as the following fiber product:
  \begin{equation}\label{eq:fiberProductIntro}
  \begin{tikzcd}
    U^f \subset U\times \C\arrow[r, "\wt f"]
    \arrow[dr, phantom, very near start,"\lrcorner"]
    \arrow[d, "\pi"]
    &
    \C\arrow[d, "\exp"]
    \\
    U\arrow[r, "f"]
    &
    \C^*.
  \end{tikzcd}
  \end{equation}
 The exponential is a regular cover with deck group \(\pi_1(\C^*)\cong\Z\). Therefore, so is its pullback \(\pi\colon U^f\to U\). The (one variable, homological, with rational coefficients) \emph{Alexander modules} of the pair \((U, f)\) are the homology groups \(H_i(U^f, \Q)\) for \(i\ge 0\). They naturally have an automorphism, given by the effect in homology of the counterclockwise generator of $\pi_1(\C^*)$, seen as a deck transformation. This gives them the structure of a \(\Q[t^{\pm 1}]\)-module.

One motivation for this construction is the following analogy: suppose that \(f\colon U\to S^1\) is a locally trivial fibration. Then, an infinite cyclic cover \(U^f\to U\) can be constructed by pulling back the universal covering map \(\R\to S^1\). In this case, the map \(\wt f: U^f\to \R\) appearing in the pullback diagram is a locally trivial fibration over a contractible base, so \(U^f\) is homotopy equivalent to the fiber of the map \(\wt f\), or equivalently, to the fiber of \(f\). Therefore, \(H_i(U^f, \Q)\) will compute the homology of the fiber. In our situation, \(f\) is an algebraic map to $\C^*\simeq S^1$, and it need not be a locally trivial fibration, but it may be thought of as a generalization of the homology of the fiber.

  A well-known example is the following situation: let \(f\colon \C^n\to \C\) be a non-constant polynomial, cutting out a hypersurface \(H\subset \C^n\). Let \(U = \C^n\setminus H\), so that \(f\) restricts to a map \(U\to \C^*\). Alexander modules have been long studied in this context, e.g. \cite{DL,DimcaNemethi,Libgober94,Liu,Max06}.

  This paper studies mixed Hodge structures on the Alexander modules. The fundamental theorem concerning mixed Hodge structures, due to Deligne \cite{DeligneII}, states that the (co)homology of complex algebraic varieties has a mixed Hodge structure. The coverings \(U^f\) that we are concerned with are not algebraic varieties in general, and the corresponding Alexander modules are finitely generated \(R\)-modules, but  not necessarily finite dimensional over \(\Q\). As a result, to develop a Hodge theory for the homology of $U^f$, one needs to extract meaningful finite dimensional $\Q$-vector spaces from the Alexander modules. We outline two main strategies to do this:

\begin{enumerate}
  \item\label{intropart:mhsalexander} Restricting our focus to the largest possible finite dimensional submodule, namely, the torsion submodule \(\Tors_R H_i(U^f, \Q)\). This is the approach taken by Christian Geske, Lauren\c{t}iu Maxim, Botong Wang and the authors in \cite{mhsalexander}. In loc. cit. a canonical and functorial MHS is constructed on \(\Tors_R H_i(U^f, \Q)\), and it is shown to satisfy many desirable properties, many of which resemble those of the fiber of \(f\).
  \item\label{intropart:abeliancovers} Restricting instead to finite dimensional quotient modules. More recently, in \cite{abeliancovers}, the authors have constructed a MHS on every quotient of \(H_i(U^f, \Q)\) of the form \(\displaystyle \frac{H_i(U^f, \Q)}{(t^N-1)^m H_i(U^f, \Q)}\), for \(m,N\ge 1\). Furthermore, they are compatible via the natural quotient maps between them (varying both $m$ and $N$), so one may even consider their inverse limit when $m\to\infty$. Note that \(H_i(U^f, \Q)\) is a finitely generated module over the principal ideal domain (PID) \(R\), so it is the direct sum of its torsion and a free part. It follows from the results of \cite{BudurLiuWang} that the torsion part is annihilated by \((t^N-1)^m\) for some \(m\) and \(N\), so these quotients contain more information than $\Tors_R H_i(U^f,\Q)$ in the following sense: the Alexander module embeds into the inverse limit of these quotients.

  In fact, the construction in \cite{abeliancovers} is much more general, and it allows one to replace \(\C^*\) with any semiabelian variety \(G\), where its universal cover takes the role of the exponential map, and the homology groups become groups over the group ring of \(\pi_1(G)\cong \Z^r\) (i.e. a Laurent polynomial ring). In place of \((t^N-1)^m\), one may take the \(m\)-th power of the ideal \(( g-1\mid g\in H)\), where \(H\) can be any finite index subgroup of \(\pi_1(G)\).
  \end{enumerate}

  Regarding the approach in \eqref{intropart:abeliancovers}, and taking a geometric point of view on the module structure, \(\Spec \Q[\pi_1(G)]\) is the character variety of \(G\), i.e. the space parametrizing rank 1 $\Q$-local systems. Let \(\fm\) be the augmentation ideal of \(G\), i.e. \((g-1\mid g\in G)\). Then, taking the inverse limit \(\varprojlim_m \Q[\pi_1(G)] / \fm^m\) corresponds to restricting to a formal neighborhood of the identity. Alexander modules are quasicoherent sheaves on the character variety, and the inverse limit \(\varprojlim_m \frac{H_i(U^f, \Q)}{\fm^m H_i(U^f, \Q)}\) is the restriction of this sheaf to the formal neighborhood of the identity. This is reminiscent of the study of resonance varieties, as in \cite{PapaSuciu,suciuSurvey}: it is a module from which one can construct the restriction of the Alexander varieties to the formal neighborhood of the identity. In \cite{suciuSurvey} (see also \cite{PapadimaSuciu10}), the relation is explained in detail.

  The two constructions outlined in \eqref{intropart:mhsalexander} and \eqref{intropart:abeliancovers} are similar, but independent. This raises the question of their relationship, which this paper aims to answer. We prove the following theorem (Corollary~\ref{cor:mainN}).

  \begin{theorem}\label{thm:intro}
    Let \(U\) be a smooth connected complex algebraic variety with an algebraic map \(f\colon U\to \C^*\), let \(U^f\) be the infinite cyclic covering map in Diagram~\eqref{eq:fiberProductIntro}. Let \(m, N\ge 1\). Let \(R = \Q[\pi_1(\C^*)]\cong \Q[t^{\pm 1}]\). Consider the MHS on \(\Tors_R H_i(U^f,\Q)\) mentioned in~\eqref{intropart:mhsalexander} (from \cite{mhsalexander}), and the MHS on\linebreak \(\frac{H_i(U^f, \Q)}{(t^N-1)^m H_i(U^f, \Q)}\) mentioned in~\eqref{intropart:abeliancovers} (from \cite{abeliancovers}). Since they are a submodule and a quotient of the Alexander module, respectively, there is a natural composition map:
  \[
    \Tors_R H_i(U^f, \Q) \hookrightarrow H_i(U^f, \Q) \twoheadrightarrow \frac{H_i(U^f, \Q)}{(t^N-1)^mH_i(U^f, \Q)}.
  \]
  This composition map is a MHS morphism for all $m, N\geq 1$. Moreover, there exists $N\in \Z_{\geq 1}$ such that this composition map is injective for $m\gg 1$.
  \end{theorem}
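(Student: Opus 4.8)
\emph{Strategy.} I would deduce the theorem from the fact that both mixed Hodge structures are induced, on subquotients of the Alexander module, by one and the same family of thickened complexes on $U$. For $N,m\ge1$ let $\mathcal{L}_{N,m}$ be the admissible, graded-polarizable variation of mixed Hodge structure on $U$ whose underlying $\Q[\pi_1(U)]$-module is $R/(t^N-1)^m$ (pulled back along $f_*\colon\pi_1(U)\to\pi_1(\C^*)$); this exists for all $N,m$ — on $\C^*$ the local system $R/(t^N-1)^m$ is the pushforward of the $m$-th unipotent thickening along the degree-$N$ cover $\C^*\to\C^*$, and the reduction to the unipotent case is routine — and, as in \cite{abeliancovers}, the natural maps $\mathcal{L}_{N',m'}\to\mathcal{L}_{N,m}$, defined whenever $N\mid N'$ and $m'\ge m$, are morphisms of admissible variations. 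As $R$ is a PID, the chain complex of $U^f$ is a complex of free $R$-modules, so the universal coefficient theorem produces, naturally in the coefficients, a short exact sequence $0\to\Alex/(t^N-1)^m\Alex\to H_i(U,\mathcal{L}_{N,m})\to\big(H_{i-1}(U^f,\Q)\big)[(t^N-1)^m]\to0$, and the map $\Alex\to H_i(U,\mathcal{L}_{N,m})$ induced by $R\twoheadrightarrow R/(t^N-1)^m$ factors as $\Alex\twoheadrightarrow\Alex/(t^N-1)^m\Alex\hookrightarrow H_i(U,\mathcal{L}_{N,m})$. The two facts I would recall are: first, the mixed Hodge structure of \cite{abeliancovers} on $\Alex/(t^N-1)^m\Alex$ makes the inclusion into $H_i(U,\mathcal{L}_{N,m})$ a morphism of mixed Hodge structures; second — after fixing, via \cite{BudurLiuWang}, integers $N_0,m_0\ge1$ with $(t^{N_0}-1)^{m_0}\TorsH=0$ — for $N'$ divisible by $N_0$ and $m'\gg1$ the composite $\TorsH\hookrightarrow\Alex\to H_i(U,\mathcal{L}_{N',m'})$ is injective and realizes the mixed Hodge structure of \cite{mhsalexander} on $\TorsH$. (Since injective morphisms of mixed Hodge structures are strict, both inclusions exhibit their sources as sub-mixed-Hodge-structures.)

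\emph{The morphism of mixed Hodge structures.} Given arbitrary $N,m\ge1$, choose $N'$ divisible by both $N$ and $N_0$ and $m'\ge\max(m,m_0)$ large enough for the above. Since $N\mid N'$ and $m'\ge m$, the structure morphism $\mathcal{L}_{N',m'}\to\mathcal{L}_{N,m}$ induces, by functoriality of the thickened-complex construction, a morphism of mixed Hodge structures $\phi\colon H_i(U,\mathcal{L}_{N',m'})\to H_i(U,\mathcal{L}_{N,m})$; by naturality of the universal coefficient sequence, $\phi$ restricts on $\Alex/(t^{N'}-1)^{m'}\Alex$ to the evident $R$-linear quotient map onto $\Alex/(t^N-1)^m\Alex$. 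A short diagram chase then gives that $\phi$ sends the sub-mixed-Hodge-structure $\TorsH\subseteq H_i(U,\mathcal{L}_{N',m'})$ into $\Alex/(t^N-1)^m\Alex\subseteq H_i(U,\mathcal{L}_{N,m})$ and that the resulting map $\TorsH\to\Alex/(t^N-1)^m\Alex$ is precisely the composition in the statement. As the restriction of a morphism of mixed Hodge structures to a sub-mixed-Hodge-structure of its source, corestricted to a sub-mixed-Hodge-structure of its target that contains the image, is again a morphism of mixed Hodge structures, this proves the first assertion.

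\emph{The main obstacle.} The difficulty is not in this bookkeeping but in the comparison of the two original constructions at the level of complexes. The mixed Hodge complexes underlying \cite{mhsalexander} and \cite{abeliancovers} are assembled from a priori different geometric input (compactifications of $U$, resolutions, bar-type resolutions of the coefficient variations), so the substance — presumably the Proposition preceding Corollary~\ref{cor:mainN} — is to check that the maps $\mathcal{L}_{N',m'}\to\mathcal{L}_{N,m}$ lift to honest morphisms between these specific models, inducing on homology exactly the $R$-linear maps used above. The canonicity of the mixed Hodge complex attached to an admissible variation on $U$ is what should make this work, but verifying that the various identifications are the natural ones is the delicate step.

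\emph{Injectivity.} This part uses no Hodge theory and holds with $N=N_0$. Write $\Alex=\TorsH\oplus F$ with $F$ a free $R$-module. For each $m\ge m_0$ one has $(t^{N_0}-1)^m\TorsH=0$, hence $(t^{N_0}-1)^m\Alex=(t^{N_0}-1)^mF\subseteq F$, so $\TorsH\cap(t^{N_0}-1)^m\Alex\subseteq\TorsH\cap F=0$. Thus the composition $\TorsH\hookrightarrow\Alex\twoheadrightarrow\Alex/(t^{N_0}-1)^m\Alex$ has trivial kernel, i.e.\ is injective, for all $m\ge m_0$.
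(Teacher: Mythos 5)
Your argument has a genuine gap at its core: the ``second recalled fact'' --- that for $N'$ divisible by $N_0$ and $m'\gg 1$ the composite $\TorsH\hookrightarrow\Alex\to H_i(U,\mathcal{L}_{N',m'})$ realizes the MHS of \cite{mhsalexander} --- is not a recollection of known results; it is, up to the easy bookkeeping you do around it, the theorem itself. In \cite{mhsalexander} the MHS on $\TorsH$ is \emph{not} defined via an embedding into homology with thickened coefficients: it is defined (Construction~\ref{con:mhsalexander}, Proposition~\ref{prop:ResUCT}) by $\Q$-dualizing the MHS on $\Tors_R H^{i+1}(U,\oL)$ through the $R$-duality $\Hom_\Q(\TorsH,\Q)\xleftarrow{\Res}\Ext^1_R(\TorsH,R)\xrightarrow{\mathrm{UCT}}\Tors_R H^{i+1}(U,\oL)$, which carries a cohomological shift, whereas the MHS of \cite{abeliancovers} on $H_i(U,R_{m}\otimes_R\cL)$ (hence on the quotient $\Alex/(t-1)^m\Alex$ sitting inside it) comes from the plain $\Q$-duality with $H^i(U,R_{-m}\otimes_R\oL)$. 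Proving that these two dualities are compatible --- so that the torsion lands in $H_i(U,R_m\otimes_R\cL)$ as a sub-MHS carrying the \cite{mhsalexander} structure --- is precisely the content of Lemma~\ref{lem:commutes} together with Lemmas~\ref{lem:quotientSideMHS}--\ref{lem:bottomInclusion}: it requires the auxiliary complexes $\cF_m^\bullet$, the mixed cone of multiplication by $s^m$ (Lemma~\ref{lem:cone}, Definition~\ref{def:mixedCone}), an explicit chain-level homotopy identifying the two induced maps, and the eigenvalue-$1$ results of \cite{EvaMoises} (Theorems~\ref{thm:tors1cohomology} and~\ref{thm:tors1homology}) that allow one to avoid the finite cover. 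Your ``main obstacle'' paragraph concedes that something must be checked but frames it as a functoriality/model-comparison issue for admissible variations and supplies no argument; with your second fact assumed, the proof of the first assertion is circular. (Note also that the paper never equips $R/(t^N-1)^m$ with an admissible variation structure; identifying the \cite{abeliancovers} MHS with one arising that way would itself be an additional unproved comparison.)

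The outer layers of your proposal are sound and do parallel the paper: the injectivity argument via $\TorsH\cap(t^{N_0}-1)^m\Alex=0$ is correct and is exactly what the paper invokes from the first sentence of Construction~\ref{con:mhsalexander}; and your reduction of arbitrary $(N,m)$ to a cofinal $(N',m')$ using that the coefficient maps induce MHS morphisms matches the spirit of Corollary~\ref{cor:mainN}, where the paper instead passes to the finite cover $U_M$, uses the isomorphism $\theta_M$, and the quotient maps of Definition-Proposition~\ref{defprop:MHSquotientsN}. But these reductions only transport the statement; the unipotent/$N=1$ comparison that they transport is the substance, and it is missing from your proposal.
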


  To make sense of the theorem, let us comment on the difference between the constructions of both MHS. The main tool in both cases are Deligne's mixed Hodge complexes of sheaves, which endow their hypercohomology spaces with MHS. Both constructions use a mixed Hodge complex of sheaves that we call ``thickening'', which is obtained as a deformation of a known mixed Hodge complex of sheaves. These were constructed in \cite{mhsalexander}, and later vastly generalized in \cite{abeliancovers}. Both constructions use the same mixed Hodge complexes to perform a thickening, although the corresponding thickenings and filtrations defined therein are slightly different.

  The first hurdle is that the theory of mixed Hodge complexes of sheaves is well-suited for cohomology, so the problem must be translated to cohomology in order to use these tools.

  The cohomology groups \(H^i(U^f, \Q)\) are very badly behaved.  They are the dual spaces to \(H_i(U^f, \Q)\), which might be infinite dimensional. Therefore, the cohomology groups can be uncountably dimensional. The more manageable approach is to dualize over \(R\), by using local systems. There is a natural local system of \(R\)-modules \(\cL\) (see Definition \ref{def:cL}) such that \(H_i(U,\cL)\cong H_i(U^f, \Q)\). If we let \(\oL\) be the \(R\)-dual local system, we are interested in the cohomology of \(H^i(U,\oL)\).

  When it comes to the question of how to relate the homology Alexander modules to the cohomology of \(\oL\), parts~\eqref{intropart:mhsalexander} and~\eqref{intropart:abeliancovers} take very different approaches. The approach in~\eqref{intropart:mhsalexander} is roughly to dualize over \(R\), and to use the Universal Coefficient Theorem over a PID to obtain isomorphisms
  \begin{equation}\label{eq:UCTResIntro}
    \Hom_\Q(\Tors_R H_i(U, \cL), \Q) \cong \Tors_R H^{i+1}(U, \oL).
  \end{equation}
  These isomorphisms are difficult to work with: they rely on an isomorphism that exists for a finite dimensional \(\Q[t^{\pm 1}]\)-module \(A\), \(\Ext^1_R(A, R)\cong \Hom_\Q(A, \Q)\). This is of course false for general finitely generated modules, and it is not clear how it can lift to the worlds of sheaves and/or triangulated categories, two of the most useful tools at our disposal.
    
  Part~\eqref{intropart:abeliancovers} takes a different approach, which is to take finite dimensional quotients of \(\cL\) and \(\oL\), and relate the (co)homology of these using the Universal Coefficient Theorem over \(\Q\). Therefore, to relate both structures, one must first understand how these two dualities are related.

  Our goal is to understand the relation between these approaches at the level of sheaves. However, notice that there is a shift in equation~\eqref{eq:UCTResIntro}, which does not appear in the duality over \(\Q\). Therefore, if there are morphisms of sheaves that realize these identities, they must have a cohomological shift, and therefore they will be morphisms in the derived category.

  The main technique used in this paper to overcome this hurdle is to find objects in the derived category (i.e. complexes of sheaves) that interpolate between these two approaches. Then we must give these complexes of sheaves the structure of a mixed Hodge complex. This would be an arduous task, as their definition is very involved. However, a combination of properties of mixed Hodge complexes in previous work and of well-known constructions, such as the mixed cone (see Definition~\ref{def:mixedCone}) make this task much simpler. In fact, we do not explicitly use the definition of a mixed Hodge complex of sheaves in this note.

  The simpler duality used in the newer construction in \cite{abeliancovers} makes some of the proofs of the key results therein much simpler than the proofs of the analogous results in our initial approach in \cite{mhsalexander}, despite the more general setting.  Theorem~\ref{thm:intro} bridges the gap between those constructions, and allows one to apply the newer techniques and constructions to the original MHS of \cite{mhsalexander}. Conversely, there are properties proved in~\cite{mhsalexander} that are not generalized in the newer construction. Notably, \cite[Theorem 1.8]{mhsalexander} shows that the MHS can be compared to the limit MHS on the nearby cycles, which is specific to \(\C^*\), so no analogous theorem is proved in \cite{abeliancovers}. Further, several structure results are proved, such as a bound on the possible nonzero weights. As a corollary of Theorem~\ref{thm:intro}, these results all apply to the MHS in \cite{abeliancovers} for the case when the semiabelian variety $G$ is $\C^*$.

There had been other prior approaches to the construction of MHS on the torsion part of Alexander modules in some particular cases \cite{DL,HainHomotopy,KK,Lib96,Liu}, see the introduction of \cite{mhsalexander} for a comparison. In \cite{mhsalexander}, it is shown that the MHS constructed therein agrees with the one in \cite{DL}, which applies to the particular case of complements of hypersurfaces transversal at infinity, and which was  recovered by different techniques in \cite{Liu}. Therefore, as a corollary to Theorem~\ref{thm:intro}, we can also conclude that the MHS of \cite{DL, Liu} are compatible with the one in \cite{abeliancovers}.

  \subsection*{Outline of the paper}

  In Section~\ref{sec:prelims} we give an overview of previous results that will be necessary for the proof, and establish the notation for the rest of the paper. In Section~\ref{sec:previousMHS}, we state some properties of mixed Hodge complexes, as well as some details of the constructions in \cite{mhsalexander} and \cite{abeliancovers}. Section~\ref{sec:aux} contains the construction of mixed Hodge complexes and mixed Hodge structures that will be used to interpolate between both constructions. Finally, in Section~\ref{sec:main} we prove the theorem.

  \subsection*{Acknowledgments} We would like to thank our collaborators  Christian Geske, Lauren\c{t}iu Maxim and Botong Wang for the many interesting and fruitful discussions we have had regarding this topic, both during the preparation of our joint work \cite{mhsalexander} and afterwards. We would also like to thank Alex Suciu for the interesting conversations during the past summer.

  \section{Preliminaries and notation}\label{sec:prelims}

\subsection{Alexander modules}

\begin{definition}\label{def:Uf}
    Throughout the whole note, \(U\) will denote a smooth connected complex algebraic variety, and \(f\colon U \to \C^*\) will be an algebraic map. The \textbf{infinite cyclic cover} associated to \(f\) is the pullback of \(\exp\colon \C\to \C^*\) by \(f\), i.e. the covering \((U^f, \pi)\) in the following diagram:
  \[
  \begin{tikzcd}[column sep = 5em, row sep = 1.5em]
      U^f \coloneqq \{(x,z)\in U\times \C \mid f(x) = e^z \}
      \arrow[r, "\wt f"]
      \arrow[d, "\pi"]
      &
      \C\arrow[d, "\exp"]
      \\
      U\arrow[r, "f"]
      &
      \C^*.
  \end{tikzcd}
  \]
  It will always be denoted as \(U^f\).
\end{definition}

Since \(\exp\) is a regular covering with deck group \(\pi_1(\C^*)\cong \Z\), so is its pullback \(\pi\). Namely, the counterclockwise generator of \(\pi_1(\C^*)\) acts as \((x,z)\mapsto (x, z+2\pi i)\).

\begin{definition}
    Let \(i\ge 0\). The \(i\)-th (univariable, homological) \textbf{Alexander module} of \((U,f)\) (with coefficients in \(\Q\)) is \(
    H_i(U^f, \Q).
    \)
\end{definition}

\begin{notation}
  We will let \(R = \Q[\pi_1(\C^*)]\). If we let \(t\) be the counterclockwise generator of \(\pi_1(\C^*)\), then we obtain an isomorphism \(R\cong \Q[t^{\pm 1}] \). We will identify $R$ and $\Q[t^{\pm 1}]$ throughout the paper in this way.
\end{notation}

\begin{remark}
    Since \(U^f\) has an action of \(\pi_1(\C^*) \cong \Z\), so do the Alexander modules. Therefore, they have the structure of an \(R\)-module. Since algebraic varieties have the homotopy type of a finite CW complex, Alexander modules are finitely generated \(R\)-modules.
\end{remark}

  \subsection{The group ring and its quotients}

We will define some notation related to \(R\) that we use throughout this paper.

\begin{notation}
  For \(m\in \Z_{\ge 1}\), we define the rings \(R_\infty\) and \(R_m\) by
  \begin{align*}
    R_\infty \coloneqq \prod_{j=0}^\infty \Sym^j H_1(\C^*,\Q);&& R_m \coloneqq \frac{R_\infty}{\prod_{j=m}^\infty \Sym^j H_1(\C^*,\Q)}.
  \end{align*}
  If we let \(s\) be the counterclockwise generator of \(H_1(\C^*, \Q)\), then we obtain isomorphisms \(R_\infty\cong \Q\llbracket s \rrbracket\) and \(R_m \cong R_\infty / s^m R_\infty\). We define \(R_{-m}\) by
  \[
    R_{-m} \coloneqq \Hom_\Q(R_m, \Q).
  \]
  Multiplication on \(R_m\) gives \(R_{-m}\) the structure of an \(R_\infty\)-module.
\end{notation}

\begin{definition}\label{def:exp}
  We consider the following ring monomorphism:
  \[
    \begin{array}{rcl}
      R & \longrightarrow & R_\infty \\
      t & \longrightarrow & e^s = \displaystyle\sum_{j = 0}^\infty  \frac{s^j}{j!}.
    \end{array}
  \]
  Throughout this paper, \(R_\infty\) and every other \(R_\infty\)-module is seen as an \(R\)-module via this homomorphism.
\end{definition}

\begin{remark}\label{rem:RmDual}
  For all \(m\geq 1\), there is a canonical \(R_\infty\)-linear isomorphism \(R_{-m} = \Hom_\Q(R_m, \Q)\cong \frac{s^{-m}R_\infty}{R_\infty} \), given by this perfect pairing:
  \[
    \begin{array}{rrcl}
      \langle\cdot, \cdot\rangle_m \colon
      &
      \displaystyle \frac{s^{-m} R_\infty}{R_\infty} \times R_m
      &
      \longrightarrow
      &
      \Q
      \\
      &
      (g(s), \alpha(s))
      &
      \longmapsto
      &
      \res_0 (g\alpha),
    \end{array}
  \]
  where $\res_0$ denotes the residue at the point $s=0$ (i.e. the coefficient of \(s^{-1}\), or equivalently, for every Laurent polynomial $h(s)$  in $s$, \(\res_0 h(s)\) is the residue of \(h(s)ds\)).
\end{remark}

\begin{remark}\label{rem:comparisonResidues}
    If we let \(K\) be the field of fractions of \(R\), one can define the residue \(\res \colon K \to \Q\) as the sum of residues over every pole (i.e. at \(b\in \C\), one considers the coefficient of \((t-b)^{-1}\)). Note that the homomorphism from Definition~\ref{def:exp} induces an isomorphism \(\frac{(t-1)^{-m}R}{R} \cong \frac{s^{-m}R_\infty}{R_\infty}\), and furthermore, that both residues agree.
    \end{remark}
    
\begin{remark}
  Under the isomorphism of Remark~\ref{rem:RmDual}, the dual of the projection \(R_{m_1}\twoheadrightarrow R_{m_2}\) for \(m_1\ge m_2 > 0\) is the inclusion \(\displaystyle \frac{s^{-m_2}R_\infty}{R_\infty}\hookrightarrow \frac{s^{-m_1}R_\infty}{R_\infty}\).
\end{remark}

\begin{remark}\label{rem:RmDualMultiplication}
Let \(\{1^\vee, s^\vee, \ldots , (s^{m-1})^\vee\}\) be the basis of \(R_{-m}\) that is dual to \(\{1,s,\ldots , s^m\}\). The isomorphism of Remark~\ref{rem:RmDual} sends \((s^i)^\vee\in R_{-m}\) to \(s^{-1-i}\in\frac{s^{-m}R_\infty}{R_\infty}\). This follows from checking that \(\langle s^{j}, s^i\rangle_m = \res_0 s^{j+i}= 1\) if \(j+i=-1\) and vanishes otherwise. In \cite[Example 2.60]{abeliancovers}, an isomorphism \(R_m\cong R_{-m}\) is described, sending \((s^i)^\vee\) to \(s^{m-1-i}\). Therefore, the composition with this isomorphism is multiplication by \(s^{m}\): \(\frac{s^{-m}R_\infty}{R_\infty} \xrightarrow{\cong} R_{-m} \xrightarrow{\cong} R_m\). \end{remark}

  \subsection{Alexander modules and local systems}

In order to make computations about Alexander modules, we will regard them as the homology of a certain local system, or the cohomology of its dual. Here, we will define it and recall some of its properties.

\begin{definition}\label{def:cL}
    Let \(U, f, U^f, \pi\) be as in Definition \ref{def:Uf}. We define \(\cL = \pi_! \ul\Q_{U^f}\), as a sheaf in \(U\).
\end{definition}

\begin{remark}
    Since \(\pi\) is a covering map, it is a locally constant sheaf, i.e. a local system. On a simply connected neighborhood of any \(x\in U\), it is isomorphic to \(\bigoplus_{x'\in f^{-1}(x)} \ul\Q\) (note that this would be a direct product if we used the pushforward \(\pi_*\ul\Q_{U^f}\) instead).

    Furthermore, on such a neighborhood, this direct sum decomposition provides a basis on which the deck group \(\pi_1(\C^*)\) acts transitively and freely. Therefore, \(\cL\) is locally isomorphic to \(R\), and globally it is a local system of rank 1 free \(R\)-modules.
\end{remark}

\begin{proposition}\label{prop:UfvscL}
    Let \(i\ge 0\). There is a canonical \(R\)-module isomorphism \(H_i(U^f, \Q)\cong H_i(U, \cL)\).
\end{proposition}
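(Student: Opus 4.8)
The plan is to establish the isomorphism $H_i(U^f,\Q)\cong H_i(U,\cL)$ by identifying both sides with a common object, the homology of $U$ with coefficients in the pushforward complex $\pi_!\ul\Q_{U^f}$, and then arguing that the $R$-module structures match. First I would recall that $\pi\colon U^f\to U$ is a covering map (in fact a regular $\Z$-cover), so $\pi_!\ul\Q_{U^f}$ is concentrated in degree $0$ and equals the local system $\cL$ of Definition~\ref{def:cL}; in particular $R^k\pi_!\ul\Q_{U^f}=0$ for $k>0$, so there is no higher-derived-functor contribution and we may work with the sheaf $\cL$ directly rather than a complex.

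The core of the argument is the chain of isomorphisms
\[
  H_i(U^f,\Q)\;\cong\; H_i(U^f,\ul\Q_{U^f})\;\cong\; H_i(U,\pi_!\ul\Q_{U^f})\;\cong\; H_i(U,\cL).
\]
For the middle isomorphism I would invoke the fact that $\pi_!$ (equivalently $R\pi_!$, since $\pi$ is a covering) preserves compactly-supported-dual / Borel--Moore homology, or more concretely use that for a covering map $\pi$ one has a canonical identification of the singular chain complex of $U^f$ with the singular chains of $U$ with coefficients in the locally constant sheaf $\pi_!\ul\Q_{U^f}$. Concretely, a singular simplex $\sigma\colon\Delta^i\to U$ together with a choice of point in the fiber over $\sigma(\text{barycenter})$ lifts uniquely to a simplex in $U^f$, and this sets up the identification of chain complexes $C_\bullet(U^f,\Q)\cong C_\bullet(U,\cL)$. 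This is the essentially the statement that homology of a cover is computed by the corresponding monodromy local system; I expect the paper to cite a standard reference for this rather than reprove it.

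The one point requiring genuine care — and what I expect to be the main obstacle — is the compatibility of the two $R$-module structures. On the left, $R=\Q[t^{\pm1}]$ acts via the deck transformation $(x,z)\mapsto(x,z+2\pi i)$ of $U^f$. On the right, $R$ acts because $\cL$ is a local system of free rank-$1$ $R$-modules, so $R$ acts by sheaf endomorphisms of $\cL$, hence on $H_i(U,\cL)$. I would check that under the chain-level identification above, the deck transformation $t$ corresponds precisely to the $R$-action on $\cL$ coming from the transitive free action of $\pi_1(\C^*)$ on the local basis $\bigoplus_{x'\in f^{-1}(x)}\ul\Q$ described in the Remark after Definition~\ref{def:cL}; this is a matter of tracing through the definitions and checking signs/orientations of the generator. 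Finally, naturality in $(U,f)$ (the word ``canonical'' in the statement) follows because every step — the covering-space lifting, the adjunction/base-change identities, and the monodromy description — is functorial. I would close by remarking that this is the classical comparison between the homology of an infinite cyclic cover and the homology of the base with coefficients in the Alexander local system, recorded here in the form needed for the rest of the paper.
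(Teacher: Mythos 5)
Your proposal is correct and follows essentially the same route as the paper, whose proof simply writes down the chain complexes computing $H_i(U^f,\Q)$ and $H_i(U,\cL)$ (via Proposition~\ref{prop:chainHomology}) and observes they coincide, with the deck action matching the $R$-structure of $\cL$. Your simplex-lifting description of the identification $C_\bullet(U^f,\Q)\cong C_\bullet(U,\cL)$ is just a concrete form of the same comparison (the paper phrases it through $C_\bullet(\wt U)\otimes_{\Q[\pi_1]}\cL_x$), and the Borel--Moore/derived-functor remarks are unnecessary but harmless.
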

\begin{proof}
    This follows from writing the complexes that compute both homology groups, and noticing that they are the same. For the definition of the chain complex computing local system homology, see \cite[Section 2.5]{dimca2004sheaves} or Proposition \ref{prop:chainHomology} below.
\end{proof}

\begin{definition}[{\cite[Remark 2.25]{abeliancovers}}]\label{def:oL}
    Let \(U,f,U^f\) be as in Definition \ref{def:Uf}, and \(\cL\) as in Definition \ref{def:cL}. Throughout, we will denote by \(\oL = \Homm_R(\cL, \ul R)\) its \(R\)-dual local system.
\end{definition}

\begin{remark}
    In fact, there is a canonical isomorphism of \(\Q\)-local systems \(\cL\cong \oL\). It does not preserve the \(R\)-module structure, but rather the generator \(t\in R\) acts on \(\oL\) as \(t^{-1}\) acts on \(\cL\). We will not use this fact here. For a proof, see \cite[Remark 2.16]{abeliancovers}.
\end{remark}

 \begin{remark}\label{rem:RmcLDual}
     For every \(m\in \Z\setminus\{0\}\), the perfect pairing
      \begin{equation}\label{eq:pairingTruncated}
      \begin{tikzcd}[row sep = 0.2em, column sep =0]
        \left(R_{-m}\otimes_R \oL\right)
        &
        \times
      &
      \left(R_m\otimes_R \cL\right)
      \arrow[rr]
      &&
      \ul\Q
      \\
      \quad\quad(\phi \otimes a
      &
      {,}
      &
      \alpha\otimes b)\quad
      \arrow[rr, mapsto]
      &\phantom{a}&
      \phi(\alpha\cdot a(b)).
    \end{tikzcd} 
    \end{equation}
    induces an isomorphism between one of the local systems in the pairing and the $\Q$-dual of the other.
 \end{remark}

\begin{remark}\label{rem:torsionRinfty}
Note that, since \(R_\infty\) is a flat \(R\)-module (see Definition~\ref{def:exp}), there are natural isomorphisms for every \(i\ge 0\):
\[
    R_\infty \otimes_R H^i(U, \oL) \cong 
  H^i(U,  R_\infty \otimes_R  \oL)
  \text{ and }
  R_\infty \otimes_R H_i(U, \cL) \cong
 H_i(U,  R_\infty \otimes_R \cL). 
\]
For any \(R\)-module \(A\), let \(A_1\) denote the submodule of elements that are annihilated by a power of \((t-1)\). Using the flatness again, \(\Tors_{R_\infty} (R_\infty \otimes_R A)\cong R_\infty \otimes_R \Tors_R A \cong A_1\). If we take the torsion part above, we have:
\[
    \Tors_R H^i(U, \oL)_1 \cong 
  \Tors_{R_\infty} H^i(U,  R_\infty \otimes_R  \oL)
  \text{ and }
  \Tors_R H_i(U, \cL)_1 \cong
 \Tors_{R_\infty} H_i(U,  R_\infty \otimes_R \cL). 
\]
\end{remark}

\begin{remark}\label{rem:quotientVStensor}
    For any \(A\)-module \(R\) and any \(m\ge 0\), the identity of \(A\) induces an isomorphism
    \[
    \frac{A}{(t-1)^mA} \cong R_m\otimes_R A.
    \]
    In particular, this can be applied to \(H^i(U, \oL)\) and  \(H_i(U, \cL)\) for any \(i\ge 0\), and to the sheaves \(\cL\) and \(\oL\) themselves. In \cite{EvaMoises}, the notation \(\oL_m\) is used to denote the quotient, and in \cite{abeliancovers}, the tensor products are used. Note that due to this isomorphism, they coincide canonically.
\end{remark}

 \subsection{Relation between homology and cohomology}

  We will need a way to relate homology and cohomology of local systems, while keeping track of the precise map, so we will state here the precise relation. This is well-known, and similar results can be found in \cite[Section 2.5]{dimca2004sheaves}.

\begin{proposition}\label{prop:chainHomology}
     Let \(S\) be a commutative ring, and let \(L\) be a local system of \(S\)-modules on a connected locally contractible space \(U\). Let \(x\in U\), let \(\pi_1 = \pi_1(U,x)\), and let \(\wt U\) be the universal cover of \(U\). Let \(C_\bullet(\wt U)\) be the singular chain complex of \(\wt U\) with coefficients in $S$ (placed such that $C_0(\wt U)$ is at degree $0$), which is a complex of free \(S[\pi_1]\)-modules. Let \(L_{x}\) denote the stalk of \(L\) at \(x\), seen as a complex of \(S[\pi_1]\)-modules. Consider the following complexes:
      \begin{equation}
      \begin{split}
      C_\bullet(U, L) &\coloneqq C_\bullet(\wt U) \otimes_{S[\pi_1]} L_x;\\
        C^\bullet(U, L) &\coloneqq \Hom_{S[\pi_1]}^\bullet(C_\bullet(\wt U) ,L_x).
      \end{split}   
      \end{equation}
Then, following \cite[IV.3]{whitehead} (see also \cite[Definition 2.10]{mhsalexander}), the \(i\)-th homology (i.e. cohomology in degree \(-i\)) of \(C_\bullet(U,L)\) is \(H_i(U,L)\), and \(H^i(C^\bullet(U,L) )\cong H^i(U,L)\).
\end{proposition}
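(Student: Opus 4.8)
The plan is to identify $C_\bullet(U,L)$ and $C^\bullet(U,L)$ with the classical complexes of singular chains and cochains of $U$ with local (twisted) coefficients in $L$, and then to compare the latter with sheaf cohomology using local contractibility. First I would unwind the universal-cover bookkeeping. Fix a lift $\wt x\in\wt U$ of $x$ and let $p\colon\wt U\to U$ be the covering; this identifies $L_x$ with the stalk at $\wt x$ of $p^{*}L$, and since $\wt U$ is simply connected and locally contractible, $p^{*}L$ is the constant sheaf with stalk $L_x$. The deck action of $\pi_1$ on $\wt U$ makes $C_\bullet(\wt U)$ a complex of free $S[\pi_1]$-modules, with a basis in each degree given by one lift of each singular simplex of $U$. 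With these identifications, an element of $C_\bullet(\wt U)\otimes_{S[\pi_1]}L_x$ is precisely a twisted singular chain of $U$, and an element of $\Hom_{S[\pi_1]}(C_\bullet(\wt U),L_x)$ a twisted singular cochain, in the sense of \cite[IV.3]{whitehead}; this is the identification already spelled out in \cite[Definition 2.10]{mhsalexander}. In particular $C_\bullet(U,L)$ is, by construction, the complex computing the homology of $U$ with local coefficients in $L$, which gives the homology assertion; it remains to show that $C^\bullet(U,L)$ computes the sheaf cohomology $H^{i}(U,L)$.

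For the cohomology statement I would pass to sheaves. Sheafify the presheaf on $U$ that sends an open $V$ to the twisted singular cochain complex of $V$ with coefficients in $L|_V$; this produces a complex of sheaves $\mathcal C^{\bullet}$ together with an augmentation $L\to\mathcal C^{0}$ (a local section $s$ of $L$ gives the $0$-cochain $y\mapsto s_y$). The stalk of $\mathcal C^{\bullet}$ at a point $y$ is the filtered colimit over neighborhoods $V\ni y$ of the twisted cochain complexes of $V$; restricting to contractible $V$, which are cofinal by local contractibility, and using that cohomology commutes with filtered colimits together with the fact that on a contractible $V$ twisted singular cohomology is $L_y$ in degree $0$ and vanishes in positive degrees, one sees that $L\to\mathcal C^{\bullet}$ is a quasi-isomorphism, i.e.\ a resolution of $L$. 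The sheaves $\mathcal C^{i}$ are acyclic for $\Gamma(U,-)$ by the standard properties of sheaves of singular cochains, so $H^{i}(U,L)\cong H^{i}\bigl(\Gamma(U,\mathcal C^{\bullet})\bigr)$. Finally, the natural comparison map from $C^{\bullet}(U,L)$ to $\Gamma(U,\mathcal C^{\bullet})$ is a quasi-isomorphism: its kernel is the subcomplex of twisted cochains that vanish on some open cover of $U$, which is acyclic, and surjectivity on cohomology is handled by barycentric subdivision---this is the step where local contractibility of $U$ genuinely enters. Composing these isomorphisms yields $H^{i}(C^{\bullet}(U,L))\cong H^{i}(U,L)$.

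I expect the only delicate part to be this last comparison, which is nothing but the local-coefficients version of the classical theorem that singular and sheaf cohomology agree on (paracompact) locally contractible spaces; since the argument is entirely local it is word-for-word the constant-coefficient case, and in practice I would just cite it (the statement in the form needed is recorded around \cite[\S2.5]{dimca2004sheaves}). I note, finally, that in every application of this proposition in the present paper the variety $U$ has the homotopy type of a finite CW complex, so one could instead use throughout the cellular chain complex of $\wt U$, a complex of finitely generated free $S[\pi_1]$-modules, in which case both the identifications of the first paragraph and the comparison with sheaf cohomology become completely elementary.
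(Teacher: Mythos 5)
Your proposal is correct, and it is essentially an expansion of what the paper does: the paper offers no argument of its own for this proposition, simply citing Whitehead [IV.3] and \cite[Definition 2.10]{mhsalexander} (with \cite[Section 2.5]{dimca2004sheaves} invoked nearby), and your two steps---identifying $C_\bullet(\wt U)\otimes_{S[\pi_1]}L_x$ and $\Hom_{S[\pi_1]}(C_\bullet(\wt U),L_x)$ with the twisted singular chain and cochain complexes, then comparing twisted singular cohomology with sheaf cohomology via the sheafified singular cochain resolution---are exactly the classical arguments those references record. The only fine print worth flagging is in your last comparison step: the theorem that (twisted) singular cohomology agrees with sheaf cohomology is classically proved for paracompact locally contractible spaces (the acyclicity of the locally-zero subcomplex and the flabbiness/softness of the sheafified cochains are where paracompactness is usually used), whereas the proposition is stated for arbitrary connected locally contractible spaces; this is still true in that generality but needs a slightly more refined argument, and in any case every space to which the paper applies the proposition is a complex algebraic variety, hence paracompact, so your remark that one could even work with a finite CW structure is apt. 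Also note that for contractible-in-a-larger-neighborhood open sets (the weaker reading of ``locally contractible'') the stalk computation still goes through in the colimit, so your cofinality phrasing is harmless.
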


\begin{definition}
    For a complex of local systems of \(S\)-modules \(L^\bullet\), we can make the analogous definition to the one above:
\[      \begin{split}      C_\bullet(U, L^\bullet) &\coloneqq \Tot^\bullet (C_\bullet(\wt U) \otimes_{S[\pi_1]} L_x^\bullet);\\
        C^\bullet(U, L^\bullet) &\coloneqq \Tot^\bullet(\Hom_{S[\pi_1]}^{\bullet,\bullet}(C_\bullet(\wt U) ,L_x^\bullet)).
      \end{split}   
\]
\end{definition}
\begin{theorem}\label{thm:dualHomology}
Let \(S\) be a commutative ring, let \(M\) be an \(S\) module, and let \(L_1, L_2\) be two \(S\)-local systems on a connected locally contractible space \(U\). Consider a locally defined bilinear pairing \(\langle \cdot , \cdot \rangle \colon L_1 \otimes_S L_2 \to \ul M\), or equivalently, a morphism \(L_1 \to \Homm_S(L_2, \ul M)\) Then,

\begin{enumerate}
    \item\label{part:pairing}The pairing \(\pairing\) induces a map of complexes
\[
C^\bullet(U,L_1)
\to
\Hom_S^\bullet(C_\bullet(U,L_2), M)
\]
\item\label{part:functorialPairing}Suppose we have two more local systems \(\wt L_i\), maps \(\psi_i\colon \wt L_i \to L_i \) for \(i=1,2\), an \(S\)-module \(\wt M\) and a map \(\psi_M\colon M\to \wt M\). Then, functoriality of the (co)chain complex and composition with \(\psi_M\) yields a map:
\[
C^\bullet(U, \wt L_1) \to
C^\bullet(U, L_1) \to \Hom_S^\bullet (C_\bullet(U,L_2 ), M)
\to
\Hom_S^\bullet (C_\bullet(U, \wt L_2), \wt M).
\]
This composition is induced by the pairing \(\psi_M\circ \langle \psi_1(\cdot), \psi_2(\cdot)\rangle\).
\item\label{part:naivePairingInHomology}
Taking cohomology yields composition maps \[
 H^i(U, L_1) \to H^i(\Hom_S^\bullet(C_\bullet(U,L_2), M) ) \to \Hom_S(H_i(U, L_2), M).
 \]
% \item\label{part:pairingInjectiveModule}
% Suppose that \(M\) is an injective \(S\)-module. Then, we obtain a map
% \[
% C^\bullet(U, L_1)
% \to 
% \Hom_S^\bullet(C_\bullet(U,L_2), M) \cong 
% R\Hom_S^\bullet(C_\bullet(U,L_2), M) .
%  \]
% Taking cohomology yields maps \[
%  H^i(U, L_1) \to \Hom_S(H_i(U, L_2), M).
%  \]
 \item\label{part:perfectPairing} If \(\pairing\) is perfect, i.e. it is an isomorphism \(L_1\cong \Homm_S(L_2, \ul M)\), then the map in \eqref{part:pairing} is an isomorphism of complexes. Furthermore, if \(M\) is an injective \(S\)-module, the maps in \eqref{part:naivePairingInHomology} are isomorphisms
 \(
  H^i(U, L_1) \cong \Hom_S(H_i(U, L_2), M)
 \)
 for all \(i\).
% \item 
\end{enumerate}\end{theorem}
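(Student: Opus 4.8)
The statement to prove is Theorem~\ref{thm:dualHomology}, a fairly concrete homological-algebra fact about (co)chain complexes of local systems. The plan is to work entirely at the level of the explicit chain complexes $C_\bullet(U, L) = C_\bullet(\wt U)\otimes_{S[\pi_1]} L_x$ and $C^\bullet(U, L) = \Hom_{S[\pi_1]}^\bullet(C_\bullet(\wt U), L_x)$ introduced in Proposition~\ref{prop:chainHomology}, so that every map in the statement is an honest map of complexes and the only thing to check is that certain diagrams commute on the nose.

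First I would set up part~\eqref{part:pairing}. A locally defined pairing $\langle\cdot,\cdot\rangle\colon L_1\otimes_S L_2\to\ul M$ is the same as a morphism of local systems $L_1\to\Homm_S(L_2,\ul M)$, and passing to stalks at $x$ this is an $S[\pi_1]$-linear map $(L_1)_x\to\Hom_S((L_2)_x, M)$ (with $\pi_1$ acting on the target by conjugation, i.e. diagonally using the action on $(L_2)_x$ and trivially on $M$). Now $C^\bullet(U,L_1)=\Hom_{S[\pi_1]}^\bullet(C_\bullet(\wt U),(L_1)_x)$ maps, by postcomposition with this stalk map, to $\Hom_{S[\pi_1]}^\bullet(C_\bullet(\wt U),\Hom_S((L_2)_x,M))$. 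The standard adjunction (tensor-hom over $S[\pi_1]$ versus $\Hom_S$) identifies the latter with $\Hom_S^\bullet\bigl(C_\bullet(\wt U)\otimes_{S[\pi_1]}(L_2)_x,\,M\bigr)=\Hom_S^\bullet(C_\bullet(U,L_2),M)$; here one uses that $C_\bullet(\wt U)$ is a complex of free, hence finitely generated in each degree modulo the fact that we only need flatness for the adjunction, $S[\pi_1]$-modules, and that the action on $M$ is trivial. One has to be mildly careful with the Koszul sign in the totalization/$\Hom$-complex conventions, but this is routine. Part~\eqref{part:functorialPairing} is then pure functoriality: the maps $\psi_i$ induce $C^\bullet(U,\wt L_1)\to C^\bullet(U,L_1)$ and $C_\bullet(U,\wt L_2)\to C_\bullet(U,L_2)$ by functoriality of $C_\bullet(\wt U)\otimes_{S[\pi_1]}(-)$ and $\Hom_{S[\pi_1]}^\bullet(C_\bullet(\wt U),-)$, and $\psi_M$ induces $\Hom_S^\bullet(-,M)\to\Hom_S^\bullet(-,\wt M)$; chasing the definition of the pairing map in~\eqref{part:pairing} shows the composite is the one induced by $\psi_M\circ\langle\psi_1(\cdot),\psi_2(\cdot)\rangle$. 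Part~\eqref{part:naivePairingInHomology} is obtained by applying $H^i$ to~\eqref{part:pairing} and then using the canonical map $H^i\Hom_S^\bullet(C_\bullet,M)\to\Hom_S(H_{-i}^{\mathrm{hom}}\ldots)$ — more precisely, the universal map from the cohomology of a $\Hom$-complex into $\Hom$ of the homology, which exists for any complex over any ring (it is the edge map, or simply the map sending a cocycle to the induced map on homology classes).

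The substantive point is part~\eqref{part:perfectPairing}. If $\langle\cdot,\cdot\rangle$ is perfect, i.e. $L_1\xrightarrow{\sim}\Homm_S(L_2,\ul M)$ is an isomorphism of local systems, then the stalk map $(L_1)_x\xrightarrow{\sim}\Hom_S((L_2)_x,M)$ is an isomorphism of $S[\pi_1]$-modules, so the postcomposition step above is an isomorphism, and the adjunction step is always an isomorphism; hence the map of complexes in~\eqref{part:pairing} is an isomorphism. For the final claim one needs that, when $M$ is an injective $S$-module, the natural map $H^i\Hom_S^\bullet(C_\bullet(U,L_2),M)\to\Hom_S(H_i(U,L_2),M)$ is an isomorphism. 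This is exactly the statement that $\Hom_S(-,M)$ is exact on the complex $C_\bullet(U,L_2)$: since $M$ is injective, $\Hom_S(-,M)$ is an exact functor, so it commutes with taking homology, giving $H^i\Hom_S^\bullet(C_\bullet(U,L_2),M)\cong\Hom_S(H_{-i}(C_\bullet(U,L_2)),M)=\Hom_S(H_i(U,L_2),M)$, using the degree convention of Proposition~\ref{prop:chainHomology} that identifies $H_i(U,L_2)$ with cohomology in degree $-i$. Combining with the isomorphism from~\eqref{part:pairing} (and $H^i(C^\bullet(U,L_1))\cong H^i(U,L_1)$ from Proposition~\ref{prop:chainHomology}) gives $H^i(U,L_1)\cong\Hom_S(H_i(U,L_2),M)$.

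I expect the only real obstacle to be bookkeeping rather than mathematics: getting the sign conventions in the totalized $\Hom$/tensor bicomplexes consistent with those in Proposition~\ref{prop:chainHomology} and with the ``$H_i=H^{-i}$'' placement, and making sure the tensor–hom adjunction is applied with $C_\bullet(\wt U)$ a complex of \emph{free} $S[\pi_1]$-modules (so no finiteness hypothesis on $(L_2)_x$ is needed for the adjunction, only for possible later identifications). One should also state clearly that ``locally defined pairing'' means a global morphism of sheaves $L_1\otimes_S L_2\to\ul M$, equivalently a global section of $\Homm_S(L_1\otimes_S L_2,\ul M)=\Homm_S(L_1,\Homm_S(L_2,\ul M))$, so that passing to the stalk at the chosen basepoint $x$ is legitimate and $\pi_1$-equivariant. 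With those conventions fixed, each of the four parts is a short diagram chase.
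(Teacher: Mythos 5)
Your proposal is correct and follows essentially the same route as the paper: the pairing map is postcomposition with the stalk map $(L_1)_x\to\Hom_S((L_2)_x,M)$ combined with tensor--hom adjunction (the paper just writes the resulting formula explicitly), functoriality gives part (2), the edge map into $\Hom_S(H_i,M)$ gives part (3), and the perfect/injective case is handled exactly as in the paper via the adjunction isomorphism and exactness of $\Hom_S(-,M)$. One small remark: the tensor--hom adjunction $\Hom_{S[\pi_1]}(C,\Hom_S(N,M))\cong\Hom_S(C\otimes_{S[\pi_1]}N,M)$ needs no freeness or finiteness hypotheses at all, so your hedging about $C_\bullet(\wt U)$ being free is unnecessary for that step (freeness only matters elsewhere, e.g.\ in the Universal Coefficient arguments).
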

\begin{proof}

    Let \(x\), \(\pi_1\) and \(C_\bullet(\wt U)\) be as in Proposition~\ref{prop:chainHomology}. Let \(L_{i,x}\) denote the stalk of \(L_i\) at \(x\). 
    \begin{enumerate}
        \item 
    The induced map is as follows:
\[
      \begin{tikzcd}[row sep = 0.0em]
\Hom_{S[\pi_1]}(C_i(\wt U), L_{1,x})
\arrow[r]
& 
\Hom_S\left(C_i(\wt U) \otimes_{S[\pi_1]} L_{2,x},M\right)
\\
\phi
\arrow[r, mapsto]
&
\left(\gamma\otimes b \mapsto \langle \phi(\gamma),b\rangle\right).
    \end{tikzcd}
    \]
   \item  Part \eqref{part:functorialPairing} is straightforward from the definition.
    
\item For part \eqref{part:naivePairingInHomology}, one can simply write the definition of \(H^i(\Hom_S^\bullet(C_\bullet(U,L_2), M) )  \) and notice that there is a naively defined map from \(
 H^i(\Hom_S^\bullet(C_\bullet(U,L_2), M) )
\), which is a subquotient of \(\Hom_S(C_i(U, L_2), M)\), to \(\Hom_S(H^i(C_\bullet(U,L_2)), M)\), given by restricting a homomorphism from \(C_i(U, L_2)\) to \(H_1(C_\bullet(U, L_2))\), its subquotient.
    
   \item Let us denote \(L_2=L\) and \(L_1 = \Homm(L, M)\). For part \eqref{part:perfectPairing}, the map above is the tensor-hom adjunction isomorphism (here we are using that for a local system, \(\Homm\) commutes with taking stalks):
   \begin{multline*}
   C^\bullet(U, \Homm_S(L, \ul M)) = 
   \Hom_{S[\pi_1]}^\bullet(C_\bullet(\wt U), \Homm_S(L_x, M))\cong 
   \\
   \Hom_S^\bullet(C_\bullet(\wt U) \otimes_{S[\pi_1]} L_x, M) = 
   \Hom_S^\bullet(C_\bullet(U ,L), M).
   \end{multline*}
   Taking cohomology on the left hand side, we obtain \(H^i(U, \Homm_S(L, \ul M))\). Taking cohomology on the right hand side of the isomorphism, and using the hypothesis that \(M\) is injective, we obtain:
   \[
      H^i(\Hom_S^\bullet(C_\bullet(U ,L), M)) \cong
      \Hom_S(H^{-i}(C_\bullet(U,L)), M)
      \cong 
      \Hom_S(H_{i}(U,L), M).
   \]
\end{enumerate}

\end{proof}

We will need the following version of the Universal Coefficient Theorem, which can be found in \cite[Lemma 2.12]{mhsalexander} with a very similar formulation. We include its proof because the precise definition of the morphism $\mathrm{UCT}$ will be important in the proof of Lemma~\ref{lem:commutes}.

\begin{theorem}[Universal Coefficient Theorem]\label{thm:UCT}
    Let \(U\) be a connected locally contractible space, and let \(L\) be a local system of free \(R\)-modules on \(U\). For every \(i\), there is an injective map that is an isomorphism onto the torsion:
    \begin{equation}\label{eq:mapUCT}
    \mathrm{UCT}:\Ext^1_R(\Tors_R(H_i(U, L)), R)
    \hookrightarrow
    H^{i+1}(U, \Homm_R(L, R)).
    \end{equation}
    This map is defined as follows: Let \(K\) be the field of fractions of \(R\). Then, \(I^\bullet \coloneqq ( K\to K/R)\) is an injective resolution of \(R\) (with the inclusion \(R\to K\)), and there is a map in the derived category \(K/R\to I^\bullet[1] \xrightarrow{\cong} R[1]\). Then, the map \(\mathrm{UCT}\) in~\eqref{eq:mapUCT} is the composition of the dashed arrows below. Note that the dashed arrows are uniquely determined by this diagram.
    \[
    \begin{tikzcd}[row sep = 1.1em]
    \Ext^1_R(H_i(U, L), R) \arrow[d, "\cong"] 
    &
    \Hom_R(H_i(U, L), K/R)
    \arrow[d, twoheadrightarrow]
    \arrow[l, "K/R \to R{[1]}"', twoheadrightarrow]
    \arrow[r, "K/R \to R{[1]}"]
    &
    H^i(\Hom_R^\bullet(C_\bullet(U, L), R[1])) 
    \arrow[d, "\text{Theorem \ref{thm:dualHomology}\eqref{part:perfectPairing}}"', "\cong"]
    \\
    \Ext^1_R(\Tors_R(H_i(U, L)), R)
    \arrow[r, dashrightarrow, "\cong"]
    &
    \Hom_R(\Tors_R H_i(U, L), K/R)
    \arrow[r, dashrightarrow]
    &
    H^{i+1}(U,\Homm_R(L, R)).
    \end{tikzcd}
    \]
\end{theorem}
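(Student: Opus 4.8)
The plan is to deduce the whole statement — the existence of the two dashed arrows, the fact that the left one is an isomorphism, the injectivity of $\mathrm{UCT}$, and the identification of its image — from a single short exact sequence of coefficient systems, together with the identifications already supplied by Theorem~\ref{thm:dualHomology}. Write $K$ for the field of fractions of $R$; since $R$ is a PID, $K$ and $K/R$ are divisible, hence injective, $R$-modules, so $0\to R\to K\to K/R\to 0$ is an injective resolution of $R$ (this is the resolution $I^\bullet$ of the statement, with boundary map $K/R\to R[1]$ in the derived category). First I would observe that $C_\bullet(U,L)=C_\bullet(\wt U)\otimes_{R[\pi_1]}L_x$ is a complex of \emph{free} $R$-modules: $C_\bullet(\wt U)$ is free over the group ring $R[\pi_1]$, which is free over $R$, and the stalk $L_x$ is free over $R$, so each $C_j(U,L)$ is a direct sum of copies of $L_x$. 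Hence $\Hom_R(C_j(U,L),-)$ is exact, and applying $\Hom_R^\bullet(C_\bullet(U,L),-)$ to the resolution yields a short exact sequence of complexes
\[
0\to \Hom_R^\bullet(C_\bullet(U,L),R)\to \Hom_R^\bullet(C_\bullet(U,L),K)\to \Hom_R^\bullet(C_\bullet(U,L),K/R)\to 0 .
\]

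Next I would identify the cohomology of its three terms. For the $R$-term, the isomorphism-of-complexes part of Theorem~\ref{thm:dualHomology}\eqref{part:perfectPairing} applied to the evaluation pairing $\Homm_R(L,\ul R)\otimes_R L\to\ul R$ (together with Proposition~\ref{prop:chainHomology}) identifies $H^{j}\bigl(\Hom_R^\bullet(C_\bullet(U,L),R)\bigr)$ with $H^{j}(U,\Homm_R(L,R))$; no injectivity is used here. For the $K$- and $K/R$-terms, injectivity of those modules lets us invoke the second half of Theorem~\ref{thm:dualHomology}\eqref{part:perfectPairing} (equivalently tensor--hom adjunction, as $K$ is flat) to get $H^{j}\bigl(\Hom_R^\bullet(C_\bullet(U,L),M)\bigr)\cong\Hom_R(H_j(U,L),M)$ for $M\in\{K,K/R\}$, naturally in $M$. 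Feeding this into the cohomology long exact sequence of the displayed short exact sequence produces, for each $i$, an exact sequence
\[
\Hom_R(H_i(U,L),K)\xrightarrow{\,q\,}\Hom_R(H_i(U,L),K/R)\xrightarrow{\,\partial\,}H^{i+1}(U,\Homm_R(L,R))\xrightarrow{\,\rho\,}\Hom_R(H_{i+1}(U,L),K),
\]
in which $q$ is post-composition with $K\twoheadrightarrow K/R$ and $\partial$ is the connecting map; by construction $\partial$ is exactly the effect on cohomology of the derived map $K/R\to R[1]$, composed with the right vertical isomorphism of the diagram. Applying $\Hom_R(H_i(U,L),-)$ to the resolution gives the algebraic exact sequence $0\to\Hom_R(H_i(U,L),R)\to\Hom_R(H_i(U,L),K)\xrightarrow{q}\Hom_R(H_i(U,L),K/R)\to\Ext^1_R(H_i(U,L),R)\to 0$ (last term since $\Ext^1_R(-,K)=0$), so $\coker(q)\cong\Ext^1_R(H_i(U,L),R)$; since $\ker\partial$ is the image of $q$ by exactness, $\partial$ factors through an injection $\bar\partial\colon\Ext^1_R(H_i(U,L),R)\hookrightarrow H^{i+1}(U,\Homm_R(L,R))$.

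Now I would match this with the diagram. Since $H_i(U,L)$ is finitely generated over the PID $R$ (as it is in the intended applications, $U$ being a variety), it splits as $\Tors_R H_i(U,L)$ plus a free module; the free summand being projective yields the left vertical isomorphism $\Ext^1_R(H_i(U,L),R)\cong\Ext^1_R(\Tors_R H_i(U,L),R)$, and applying $\Hom_R(\Tors_R H_i(U,L),-)$ to the resolution — noting that a torsion module has no nonzero maps into the torsion-free modules $R$ and $K$ — yields the left dashed isomorphism $\Ext^1_R(\Tors_R H_i(U,L),R)\cong\Hom_R(\Tors_R H_i(U,L),K/R)$. Commutativity of the left square of the diagram is naturality of connecting homomorphisms for the inclusion $\Tors_R H_i(U,L)\hookrightarrow H_i(U,L)$ and the quotient $H_i(U,L)\twoheadrightarrow H_i(U,L)/\Tors_R H_i(U,L)$, while the right square commutes by the definition of $\partial$; thus the dashed composition is well defined, its left arrow is an isomorphism, and $\mathrm{UCT}=\bar\partial$ is injective. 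It remains to compute the image: by exactness the image of $\partial$ equals $\ker\rho$, and the target of $\rho$ is $\Hom_R(H_{i+1}(U,L),K)\cong\Hom_K\bigl(H_{i+1}(U,L)\otimes_R K,K\bigr)$, a $K$-vector space and so a torsion-free $R$-module, whence $\ker\rho\supseteq\Tors_R H^{i+1}(U,\Homm_R(L,R))$; conversely the image of $\partial$ is a quotient of $\Hom_R(\Tors_R H_i(U,L),K/R)\cong\Ext^1_R(\Tors_R H_i(U,L),R)$, which is a torsion $R$-module since $\Tors_R H_i(U,L)$ is finitely generated torsion over a PID, so the image is contained in $\Tors_R H^{i+1}(U,\Homm_R(L,R))$. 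Combining the two inclusions, $\mathrm{UCT}$ is an isomorphism onto the torsion.

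I expect the only real work to be bookkeeping rather than a new idea: over a PID this is the classical universal coefficient theorem, and the effort goes into checking that the connecting map $\partial$ of the long exact sequence really coincides with the map the statement builds out of the derived boundary $K/R\to R[1]$ — that is, commutativity of the displayed diagram and well-definedness of the two dashed arrows — and into the homological algebra over $R$, namely the identifications $\Ext^1_R(H_i(U,L),R)\cong\Ext^1_R(\Tors_R H_i(U,L),R)\cong\Hom_R(\Tors_R H_i(U,L),K/R)$ and the torsion-ness of $\Ext^1_R(\Tors_R H_i(U,L),R)$, which is where finite generation of the Alexander module enters.
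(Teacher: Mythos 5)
Your argument is correct and is essentially the paper's own proof: both apply \(\Hom_R^\bullet(C_\bullet(U,L),-)\) to the triangle \(R\to K\to K/R\) (the paper uses its rotation \(0\to K/R\to I^\bullet[1]\to K[1]\to 0\)), identify the three cohomology groups via Theorem~\ref{thm:dualHomology}\eqref{part:perfectPairing} and Proposition~\ref{prop:chainHomology}, recognize the connecting map as induced by \(K/R\to R[1]\), and then read off injectivity and the identification of the image from the resulting long exact sequence together with \(\coker\bigl(\Hom_R(H_i,K)\to\Hom_R(H_i,K/R)\bigr)\cong\Ext^1_R(H_i,R)\). The only minor differences are cosmetic: the paper identifies the image with the torsion by noting that the cokernel of \(\mathrm{UCT}\) sits inside the torsion-free module \(\Hom_R(H_{i+1}(U,L),R)\), whereas you verify both inclusions directly, invoking finite generation of \(H_i(U,L)\) (a hypothesis the paper's reduction to \(\Ext^1_R(\Tors_R H_i,R)\) also implicitly relies on).
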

\begin{proof}
    Considering \(C_\bullet(U, L)\), as in Proposition \ref{prop:chainHomology}, we consider the short exact sequence \(0\to K/R \to I^\bullet[1] \to K[1]\to 0\), and we apply \(\Hom^\bullet_R(C_\bullet(U, L), \cdot)\) (recall that \(\Hom^\bullet \colon \Tot \circ \Hom^{\bullet,\bullet}\)), to obtain a short exact (since \(C_\bullet(U, L)\) is a complex of free \(R\)-modules) sequence:
\[
0
\to \Hom^\bullet_R(C_\bullet(U, L), K/R )
\to \Hom^\bullet_R(C_\bullet(U, L), I^\bullet[1]) 
\to \Hom^\bullet_R(C_\bullet(U, L), K[1])
\to 0.
\]
If we let \(I = K \) or \(I=K/R\), then \(I\) is injective, and taking the cohomology of these complexes we obtain:
\begin{align*}
H^i(
\Hom^\bullet_{R}(C_\bullet(U,L), I)
)
&\cong
\Hom_{R}(H^{-i}(C_\bullet(U,L)), I)
&
(\text{\(I\) is injective})
\\
&=
\Hom_{R}(H_{i}(U,L), I).
&
\text{(Theorem \eqref{thm:dualHomology}~\eqref{part:perfectPairing})}
\end{align*}
Also, we have a quasi-isomorphism \(I^\bullet \cong R\) given by \(R\to K\), so up to this map, we have,
\begin{align*}
   H^i(\Hom^\bullet_{R}(C_\bullet(U,L),I^\bullet[1])) &\cong
    H^i(R\Hom^\bullet_{R}(C_\bullet(U,L),R[1])) 
    &\text{(\(I^\bullet \cong R\))}
    \\
    &\cong
    H^i(\Hom^\bullet_{R}(C_\bullet(U,L),R[1])) 
    & \text{(\(C_\bullet(U,L)\) is free)}\\
    &\cong
    H^{i+1}(C^\bullet(U,\Homm_R(L, R))) 
    & \text{(Theorem \ref{thm:dualHomology}{,} \eqref{part:perfectPairing})}\\
    &\cong
    H^{i+1}(U,\Homm_R(L, R)) .
    & \text{(Proposition \ref{prop:chainHomology})}
\end{align*}
Therefore, the short exact sequence above induces the long exact sequence
\begin{equation}\label{eq:UCTles}
\cdots \to\Hom_R(H_i(U,L), K)
\to
\Hom_R(H_i(U,L), K/R)
\to
H^{i+1}(U, \Homm_R(L, R))
\to \cdots
\end{equation}
Note that, by the description of the maps above, the second map is indeed induced by \(K/R\to R[1]\), since it was constructed as the composition of the inclusion of \(K/R\) into \(I^\bullet[1]\) with the inverse of the quasi-isomorphism \(R[1]\to I^\bullet[1] \) and the isomorphism from Theorem \ref{thm:dualHomology}.

Next, since \(K\to K/R\) is an injective resolution of \(R\), for any \(R\)-module \(A\),
\(
\coker(
\Hom_R(A, K)
\to
\Hom_R(A, K/R))\cong \Ext^1_R(A, R)
\), and furthermore this map is induced by \(I^\bullet\cong R\), so we have
\[
0\to \Ext^1_R(H_i(U,L), R)
\to
H^{i+1}(U, \Homm_R(L, R))\to \cdots 
\]
In particular, the composition
\[
\Hom_R(H_i(U,L), K/R) \to 
\coker(
\Hom_R(H_i(U,L), K)
\to
\Hom_R(H_i(U,L), K/R))
\to
\Ext^1_R(H_i(U,L), R)
\]
is indeed induced by \(K/R\to R[1]\) as well. Note that only the torsion part contributes to the \(\Ext\) group, so the above map factors through the quotient \(\Ext^1_R(\Tors_R H_i(U,L), R)\):
\[
0\to \Ext^1_R(\Tors_R H_i(U,L), R)
\to
H^{i+1}(U, \Homm_R(L, R))\to \cdots 
\]
It only remains to prove that the map is an isomorphism onto the torsion: it is injective due to the exactness of \eqref{eq:UCTles}. By the exactness, its cokernel is isomorphic to the degree zero cohomology of \(\Hom_R(H_{i+1}(U, L), I^\bullet)\). Since \(I^\bullet\) is an injective resolution of \(R\), the cokernel is isomorphic to \(\Hom_R(H_{i+1}(U, L), R)\),
which is a free module. This shows that indeed the map is surjective onto the torsion.
\end{proof}

\begin{corollary}\label{cor:UCTinfty}
    Let \(K_\infty\) be the field of fractions of \(R_\infty\). Since \(R_\infty\) is flat over \(R\), everything in Theorem~\ref{thm:UCT} can be tensored with \(R_\infty\) to obtain an analogous result. In particular, we obtain the following commutative diagram:
     \[
    \begin{tikzcd}[row sep = 1.1em, column sep  =4em]
    \Hom_{R_\infty}(H_i(U, R_\infty \otimes_R L), {K_\infty}/{R_\infty})
    \arrow[ddr, twoheadrightarrow]
    \arrow[d, twoheadrightarrow, , "K_\infty / R_\infty \to R_\infty {[1]}"']
    \arrow[r, "K_\infty / R_\infty \to R_\infty {[1]}"]
    &
    H^i(\Hom_{R_\infty}^\bullet(C_\bullet(U, R_\infty \otimes_R L), {R_\infty}[1])) 
    \arrow[d, "\text{Theorem \ref{thm:dualHomology}\eqref{part:perfectPairing}}"', "\cong"]
    \\
    \Ext^1_{R_\infty}(H_i(U,  R_\infty \otimes_R L), {R_\infty}) \arrow[d, "\cong"] 
    &
    H^{i+1}(U,\Homm_{R_\infty}(R_\infty \otimes_R L, {R_\infty}))
    \\
    \Ext^1_{R_\infty}(\Tors_{R_\infty}(H_i(U, R_\infty \otimes_R L)), {R_\infty})
    \arrow[r, dashrightarrow, "\cong"]
    &
    \Hom_{R_\infty}(\Tors_{R_\infty} H_i(U, R_\infty \otimes_R L), {K_\infty}/{R_\infty}).
    \arrow[u, dashrightarrow]
    \end{tikzcd}
    \]
\end{corollary}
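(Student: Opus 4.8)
The plan is to treat Corollary~\ref{cor:UCTinfty} as an instance of flat base change applied to Theorem~\ref{thm:UCT}. First I would record the two ring-theoretic facts that make this work: $R_\infty\cong\Q\llbracket s\rrbracket$ is a discrete valuation ring, hence in particular a PID, and the homomorphism $R\to R_\infty$ of Definition~\ref{def:exp} is flat. Consequently $R_\infty\otimes_R L$ is again a local system of free modules of finite rank, now over $R_\infty$, so Theorem~\ref{thm:UCT} applies verbatim with the substitutions $R\rightsquigarrow R_\infty$, $L\rightsquigarrow R_\infty\otimes_R L$, $K\rightsquigarrow K_\infty$, and injective resolution $I^\bullet\rightsquigarrow(K_\infty\to K_\infty/R_\infty)$. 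This already produces a commutative diagram of exactly the shape in the statement of Theorem~\ref{thm:UCT}, with all of its arrows, together with the conclusion that the resulting $\mathrm{UCT}$ map is injective with image the torsion.

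The second step is to identify the entries of that diagram with base changes of the entries appearing in Theorem~\ref{thm:UCT}. Flatness gives $H_i(U,R_\infty\otimes_R L)\cong R_\infty\otimes_R H_i(U,L)$ and, compatibly, $\Tors_{R_\infty}H_i(U,R_\infty\otimes_R L)\cong R_\infty\otimes_R\Tors_R H_i(U,L)$, as in Remark~\ref{rem:torsionRinfty}. For the cohomological side one combines the base-change isomorphism $R_\infty\otimes_R H^{i+1}(U,\Homm_R(L,R))\cong H^{i+1}(U,R_\infty\otimes_R\Homm_R(L,R))$ of Remark~\ref{rem:torsionRinfty} with the fact that $\Homm_R(L,R)\otimes_R R_\infty\cong\Homm_{R_\infty}(R_\infty\otimes_R L,R_\infty)$, which holds because $L$ is a local system of finite free $R$-modules, so that $\Homm$ commutes with flat base change. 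One also checks that $R_\infty\otimes_R K\cong K_\infty$ and $R_\infty\otimes_R(K/R)\cong K_\infty/R_\infty$: since $t-1$ maps to $e^s-1=s\cdot u$ with $u$ a unit of $R_\infty$, localizing $R_\infty$ at the image of $R\setminus\{0\}$ already inverts $s$, so that localization is all of $K_\infty$ (cf. Remark~\ref{rem:comparisonResidues}). Under these identifications the structural maps ``$K/R\to R[1]$'' and the comparison isomorphism of Theorem~\ref{thm:dualHomology}\eqref{part:perfectPairing} base-change precisely to the maps ``$K_\infty/R_\infty\to R_\infty[1]$'' and the corresponding isomorphism appearing in the Corollary, because all of these maps are induced by the diagram $R\to K\to K/R$, which tensors up to $R_\infty\to K_\infty\to K_\infty/R_\infty$.

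Finally I would observe that the diagram displayed in Corollary~\ref{cor:UCTinfty} is simply a rearrangement of the Theorem~\ref{thm:UCT} diagram for the pair $(R_\infty,R_\infty\otimes_R L)$: its commutativity, the existence of the two dashed arrows, the statement that the lower dashed arrow is an isomorphism, and the surjectivity of the slanted arrow onto $\Hom_{R_\infty}(\Tors_{R_\infty}H_i(U,R_\infty\otimes_R L),K_\infty/R_\infty)$ are all inherited directly from that theorem. There is no substantive obstacle here; the whole content is the exactness of $R_\infty\otimes_R-$. The one point that deserves an explicit word — and the only place a genuine argument is needed — is the cohomological flat base change together with the identification $R_\infty\otimes_R K\cong K_\infty$: to make the former rigorous one replaces the singular chain complex of $\wt U$ by a bounded complex of finitely generated free $R[\pi_1]$-modules, available since $U$ has the homotopy type of a finite CW complex, so that $\Hom_{R[\pi_1]}^\bullet(-,-)$ — and not merely its cohomology — commutes with $R_\infty\otimes_R-$.
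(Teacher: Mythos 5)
Your proposal is correct and follows essentially the same route as the paper, whose entire argument for this corollary is the one-line observation that $R\to R_\infty$ is flat, so the construction of Theorem~\ref{thm:UCT} can be tensored with $R_\infty$. Your two refinements — running Theorem~\ref{thm:UCT} verbatim over the discrete valuation ring $R_\infty$ with the free local system $R_\infty\otimes_R L$ (so the diagram exists intrinsically over $R_\infty$), and replacing singular chains by a bounded complex of finitely generated free modules together with the identifications $R_\infty\otimes_R K\cong K_\infty$ and $R_\infty\otimes_R (K/R)\cong K_\infty/R_\infty$ to justify commuting $\Hom$ with the flat base change — are precisely the points the paper leaves implicit, and you handle them correctly.
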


Finally, let us recall a result that can be proved using Proposition~\ref{prop:chainHomology}, and will be of use later.

\begin{proposition}\label{prop:TorsIntoHomology}
Let \(L\) be a local system of finitely generated free \(R\)-modules (we will apply this statement for \(\cL\) and \(\oL\)). The projection $L\twoheadrightarrow R_m\otimes_RL$ induces monomorphisms
\begin{equation}\label{eq:commutetensorhomology}
\frac{H_i(U,L)}{(t-1)^mH_i(U,L)} \hookrightarrow H_i(U, R_{m}\otimes_R L)\text{ and }
\frac{H^i(U,L)}{(t-1)^mH^i(U,L)} \hookrightarrow H^i(U, R_{m}\otimes_R L).
\end{equation}
Furthermore, if \(m\gg 1\), there are also monomorphisms induced by the identity of \(H_i(U, L)\) and \(H^i(U, L)\), respectively:
\begin{equation}\label{eq:commutetensorhomologyTors}
\Tors_R H_i(U, L)_1 \hookrightarrow 
\frac{H_i(U,L)}{(t-1)^mH_i(U,L)}\text{ and }
\Tors_R H^i(U, L)_1 \hookrightarrow \frac{H^i(U,L)}{(t-1)^mH^i(U,L)} .
\end{equation}
The subindex $1$ denotes the submodule annihilated by a power of \(t-1\), as in Remark~\ref{rem:torsionRinfty}.
\end{proposition}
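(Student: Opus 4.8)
The plan is to carry out both statements at the level of the chain and cochain complexes computing $H_*(U,L)$ and $H^*(U,L)$, and to reduce everything to elementary module theory over the PID $R$. The single input that does all the work is that, because $U$ has the homotopy type of a finite CW complex, we may compute $H_*(U,L)$ and $H^*(U,L)$ by a bounded complex $C_\bullet$, resp. $C^\bullet$, of finitely generated free $R$-modules (e.g. cellular chains/cochains of a finite CW model of $U$ with coefficients in $L$; compare Proposition~\ref{prop:chainHomology} and the proof of Theorem~\ref{thm:UCT}, where this freeness is already used). In particular $H_i(U,L)$ and $H^i(U,L)$ are finitely generated over the Noetherian ring $R$. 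Moreover, since tensoring over $R$ with $R_m$ is the same as reducing modulo $(t-1)^m$ (Remark~\ref{rem:quotientVStensor}) and the terms of $C_\bullet$ and $C^\bullet$ are finitely generated, the complexes computing $H_*(U,R_m\otimes_R L)$ and $H^*(U,R_m\otimes_R L)$ are $C_\bullet/(t-1)^mC_\bullet$ and $C^\bullet/(t-1)^mC^\bullet$, and under this identification the maps induced by the projection $L\twoheadrightarrow R_m\otimes_R L$ become the canonical quotient maps $C_\bullet\twoheadrightarrow C_\bullet/(t-1)^mC_\bullet$ and $C^\bullet\twoheadrightarrow C^\bullet/(t-1)^mC^\bullet$.

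For~\eqref{eq:commutetensorhomology}: since $(t-1)^m$ is a non-zero-divisor in $R$ and $C_\bullet$ consists of free (hence torsion-free) $R$-modules, multiplication by $(t-1)^m$ is injective on $C_\bullet$, so we obtain a short exact sequence of complexes
\[
0\longrightarrow C_\bullet\xrightarrow{\ (t-1)^m\ }C_\bullet\longrightarrow C_\bullet/(t-1)^mC_\bullet\longrightarrow 0 .
\]
Its long exact sequence contains $H_i(U,L)\xrightarrow{(t-1)^m}H_i(U,L)\xrightarrow{\,p\,}H_i(U,R_m\otimes_R L)$, where $p$ is the map induced by the projection (by the first paragraph). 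Exactness at the middle term gives $\ker p=(t-1)^mH_i(U,L)$, so $p$ factors through the asserted monomorphism $H_i(U,L)/(t-1)^mH_i(U,L)\hookrightarrow H_i(U,R_m\otimes_R L)$. The cohomology statement is identical with $C^\bullet$ in place of $C_\bullet$.

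For~\eqref{eq:commutetensorhomologyTors}: put $M=H_i(U,L)$, a finitely generated module over the PID $R$. Write $M=F\oplus T$ with $F$ free and $T=\Tors_R M$, and further $T=T_1\oplus T'$, where $T_1=(\Tors_R M)_1$ is the $(t-1)$-primary summand (notation as in Remark~\ref{rem:torsionRinfty}) and $T'$ is the sum of the remaining primary components. Choose $m$ large enough that $(t-1)^mT_1=0$. Then $(t-1)^mM=(t-1)^mF\oplus(t-1)^mT'$, so the projection $M\twoheadrightarrow T_1$ coming from the direct sum decomposition kills $(t-1)^mM$ and therefore factors through $M/(t-1)^mM$; as its composite with the inclusion $T_1\hookrightarrow M$ is $\mathrm{id}_{T_1}$, the natural map $T_1\hookrightarrow M\twoheadrightarrow M/(t-1)^mM$ is split injective. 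This is the first monomorphism in~\eqref{eq:commutetensorhomologyTors}; the second follows verbatim with $H^i(U,L)$ (finitely generated by the first paragraph) in place of $M$.

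I do not expect a genuine obstacle here: granted the finite CW hypothesis, everything reduces to the long exact sequence of the multiplication-by-$(t-1)^m$ short exact sequence together with the structure theorem for finitely generated modules over $R$. The only point that calls for care is the bookkeeping in the first paragraph — correctly matching the sheaf-level projection $L\to R_m\otimes_R L$ with literal reduction modulo $(t-1)^m$ on complexes of finitely generated free $R$-modules — and this is essentially already contained in the proof of Theorem~\ref{thm:UCT}.
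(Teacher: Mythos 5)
Your proof is correct and takes essentially the same route as the paper: the paper likewise reduces to a complex of free $R$-modules (the singular chains of the universal cover tensored over $R[\pi_1]$, rather than a finite CW model), identifies the projection-induced maps with reduction mod $(t-1)^m$ on that complex, and applies the long exact sequence of $0\to C\xrightarrow{(t-1)^m}C\to C/(t-1)^mC\to 0$, then uses finite generation to choose $m$ with $(t-1)^m\Tors_R H_i(U,L)_1=0$. Your structure-theorem splitting merely spells out the final embedding that the paper asserts in one line.
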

\begin{proof}
    We apply Proposition~\ref{prop:chainHomology}, and we have that the projection \(L\twoheadrightarrow R_m\otimes_R L\) induces the following morphisms:
    % \begin{align*}
    %     C_\bullet(U, L)=C_\bullet(\wt U)\otimes_{R[\pi_1]} L_x  
    %     &
    %     \longrightarrow
    %     C_\bullet(\wt U)\otimes_{R[\pi_1]} (R_m\otimes_R L_x)=
    %     C_\bullet(U,R_m\otimes_R  L)  ;
    %     \\
    %      C^\bullet(U, L) = 
    %      \Hom^\bullet_{S[\pi_1]}(C_\bullet(\wt U), L_x)
    %               &\longrightarrow C^\bullet(U, R_m\otimes_R L) 
    %      =        \Hom^\bullet_{S[\pi_1]}(C_\bullet(\wt U), R_m\otimes_R L_x).
    % \end{align*}
    % Applying the adjunction between tensor and the forgetful functor, we obtain the isomorphisms
    \begin{align*}
            R_m\otimes_R C_\bullet(U, L)
        =
        R_m\otimes_R (C_\bullet(\wt U)\otimes_{R[\pi_1]} L_x)
        &\cong
        C_\bullet(\wt U)\otimes_{R[\pi_1]} (R_m\otimes_R L_x)
        =
        C_\bullet(U,R_m\otimes_R  L)  
        ;\\
        R_m\otimes_R C^\bullet(U, L)
        =
        R_m\otimes_R \Hom^\bullet_{S[\pi_1]}(C_\bullet(\wt U), L_x)
        &\cong
        \Hom^\bullet_{S[\pi_1]}(C_\bullet(\wt U), R_m\otimes_R L_x)
        =
        C^\bullet(U, R_m\otimes_R L) 
        .
    \end{align*}
    Note that indeed both morphisms are induced by the map on stalks \(1\otimes_R \Id_{L_x} \colon L_x \to R_m\otimes_R L_x\).
    
    If we let \(C\) denote either \(C_\bullet(U, L)\) or \(C^\bullet(U,L)\), it is a complex of finitely generated free \(R\)-modules. The question is whether \(\frac{H^i(C)}{(t-1)^mH^i(C)}\to H^i(R_m\otimes_R C)\) is an injection. This follows from the long exact sequence in cohomology applied to the short exact (because \(C\) is a complex of free modules) sequence
    \[
    0\to C\xrightarrow{(t-1)^m} C \to R_m\otimes_R C\to 0.
    \]
    It induces the short exact sequence
    \[
    0\to \frac{H^i(C)}{(t-1)^mH^i(C)} \to H^i(R_m\otimes_R C),
    \]
    so indeed the maps~\eqref{eq:commutetensorhomology} are injective.
    When \(m\) is large enough that \((t-1)^m\cdot \Tors_R H^i(C)_1 = 0\) (using the fact that \(L\) is finitely generated), then \(\Tors_R H^i(C)_1\) embeds into \(\frac{H^i(C)}{(t-1)^mH^i(C)} \), so indeed the maps~\eqref{eq:commutetensorhomologyTors} are injective.
\end{proof}

  \subsection{Mixed Hodge complexes}

Our constructions of MHS rely on Deligne's machinery of mixed Hodge complexes of sheaves from \cite{DeligneII}. However, we will not need to work with them explicitly here, but instead rely on previous work (see Section~\ref{sec:previousMHS}). Therefore, we will not give the definition, which can be found in \cite[Definition 3.13]{peters2008mixed}. The mixed Hodge complexes of sheaves we use are over $\Q$.

  \begin{definition}[{\cite[Lemma-Definition 2.35]{peters2008mixed}}]\label{def:shift}
  Let \(\cF^\bullet\) be a mixed Hodge complex of sheaves, and let \(m\in \Z\). There is a notion of a Tate twist, \(\cF^\bullet(m)\), extending the Tate twist of pure and mixed Hodge structures, which does not alter the underlying complex (although we choose to not multiply by $(2\pi i)^m$ as in \cite{peters2008mixed} and instead follow the convention explained in \cite[Section 2.8]{mhsalexander}). There is also a notion of shift \(\cF^\bullet[m]\), that extends the cohomological shift of complexes.
\end{definition}

\begin{remark}
    The book \cite{peters2008mixed} contains two different definitions of shift of a mixed Hodge complex of sheaves: one is found in Lemma-Definition 2.35, and the second one is used implicitly in Theorem 3.22. We use the former. For the precise definitions and the difference between these, see \cite[Remark 2.33]{mhsalexander}. In this paper, we will just need the properties of the shift and Tate twist stated in Theorem~\ref{thm:MHSDeligne} below.
\end{remark}

\begin{defprop}[The cone of a morphism]\label{def:mappingCone}
      Let \(\phi\colon \cK^\bullet_1\to \cK_2^\bullet\) be a morphism of complexes in an abelian category (e.g. a category of sheaves of modules over a commutative ring). We can define a new complex \(\Cone^\bullet(\phi)\). We consider a double complex \(\cK^{\bullet, \bullet}\), where,  \(\cK^{-1, \bullet} = \cK_{1}^\bullet\), \(\cK^{0, \bullet} = \cK_{2}^\bullet\) and the remaining rows vanish. The horizontal differentials are those in the original complexes, and the vertical differential is \(\phi\). Then, we define the cone as \(
       \Cone^\bullet(\phi) \coloneqq \Tot^\bullet (\cK^{\bullet,\bullet})
      \). 
      In particular, as a vector space, \(\Cone^\bullet(\phi) \cong \cK^\bullet_2 \oplus \cK^\bullet_1[1]\). The stupid filtration on the rows of \(\cK^{\bullet,\bullet}\) yields a short exact sequence:
  \[
    0\to \cK_2^\bullet \to \Cone^\bullet (\phi) \to \cK_1^\bullet[1] \to 0.
  \]
\end{defprop}

\begin{theorem}[{\cite[Theorem 3.22]{peters2008mixed}}]\label{thm:mixedCone}
  Let \(\phi\colon \cK^\bullet_1\to \cK_2^\bullet\) be a morphism of mixed Hodge complexes of sheaves. Then \(\Cone^\bullet(\phi)\) has a canonical structure of a mixed Hodge complex of sheaves. The mapping cone short exact sequence becomes a short exact sequence of mixed Hodge complexes of sheaves:
  \[
    0\to \cK_2^\bullet \to \Cone^\bullet (\phi) \to \cK_1^\bullet[1](-1) \to 0.
  \]
  Note that the presence of the Tate twist does not seem to agree with \cite{peters2008mixed}, but the definition of shift implicitly used therein only matches our Definition~\ref{def:shift} up to a Tate twist.
\end{theorem}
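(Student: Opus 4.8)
The plan is to read off the definition of a mixed Hodge complex of sheaves from \cite[Definition 3.13]{peters2008mixed} and to apply the mapping cone construction of Definition~\ref{def:mappingCone} separately to its rational and complex strata. Recall that such an object $\cK^\bullet$ consists of a filtered complex $(\cK^\bullet_\Q, W)$ of sheaves of $\Q$-vector spaces, a bifiltered complex $(\cK^\bullet_\C, W, F)$, and a chain of filtered quasi-isomorphisms connecting $(\cK^\bullet_\Q, W)\otimes\C$ to $(\cK^\bullet_\C, W)$, subject to the purity axiom that for each $n$ the associated graded $\Gr^W_n$ (with the induced $F$) is a Hodge complex of sheaves of weight $n$. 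A morphism $\phi\colon\cK^\bullet_1\to\cK^\bullet_2$ of mixed Hodge complexes of sheaves supplies morphisms $\phi_\Q$ and $\phi_\C$ that are strictly compatible with $W$ (and, for $\phi_\C$, with $F$), together with a morphism between the two connecting chains.

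First I would set $\Cone^\bullet(\phi)_\Q\coloneqq\Cone^\bullet(\phi_\Q)$ and $\Cone^\bullet(\phi)_\C\coloneqq\Cone^\bullet(\phi_\C)$. On $\Cone^\bullet(\phi)_\C$ the Hodge filtration is induced from those of $\cK^\bullet_2$ and $\cK^\bullet_1[1]$, and on both cones the weight filtration is the unique increasing filtration by subcomplexes --- assembled from $W$ on $\cK^\bullet_2$ together with an index-shifted copy of $W$ on $\cK^\bullet_1[1]$ --- for which the mapping cone short exact sequence of Definition~\ref{def:mappingCone} is $W$-strict and realises the sub- and quotient-objects as $\cK^\bullet_2$ and $\cK^\bullet_1[1](-1)$. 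The Tate twist $(-1)$ appears here precisely because the shift of Definition~\ref{def:shift} differs from the shift implicit in \cite[Theorem 3.22]{peters2008mixed} by a Tate twist. The connecting chain for $\Cone^\bullet(\phi)$ is produced from the morphism of connecting chains by functoriality of $\Cone^\bullet$; that each of its arrows remains a filtered quasi-isomorphism follows from the long exact cohomology sequence of the mapping cone and the five lemma, given the corresponding property for $\cK^\bullet_1$ and $\cK^\bullet_2$ (and likewise in the bifiltered setting over $\C$).

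The key step is to verify the purity axiom for $\Cone^\bullet(\phi)$, i.e. that $\Gr^W_n\Cone^\bullet(\phi)$ is a Hodge complex of sheaves of weight $n$. Because $\phi$ is \emph{strictly} compatible with $W$ and because of the index shift built into the weight filtration of the cone, the $\phi$-component of the differential vanishes after passing to $\Gr^W_n$; hence $\Gr^W_n\Cone^\bullet(\phi)$ splits, as a complex with its filtrations, into the direct sum of $\Gr^W_n\cK^\bullet_2$ and a shifted, Tate-twisted copy $(\Gr^W_{n'}\cK^\bullet_1)[1](-1)$ of the matching graded piece of $\cK^\bullet_1$. Both summands are Hodge complexes of weight exactly $n$: the first by hypothesis, and the second because the conventions of Definition~\ref{def:shift} are arranged so that $[1]$ together with $(-1)$ raises the weight of a pure Hodge complex by exactly $n-n'$ (the compatibility of Definition~\ref{def:shift} with purity). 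Since a finite direct sum of Hodge complexes of the same weight is again a Hodge complex of that weight, the axiom holds. Therefore $\Cone^\bullet(\phi)$ with $W$ and $F$ is a mixed Hodge complex of sheaves; by the very choice of $W$, the sequence $0\to\cK^\bullet_2\to\Cone^\bullet(\phi)\to\cK^\bullet_1[1](-1)\to 0$ is exact as a sequence of mixed Hodge complexes of sheaves, and the construction is canonical because no choices were made.

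I expect the main obstacle to be bookkeeping rather than a substantive difficulty: one has to pin down the index shift in the weight filtration of the cone, together with the compensating Tate twist, so that simultaneously $W$ is an increasing filtration by subcomplexes, the mapping cone sequence is $W$-strict, and the summand $(\Gr^W_{n'}\cK^\bullet_1)[1](-1)$ of $\Gr^W_n\Cone^\bullet(\phi)$ acquires weight exactly $n$; getting all three conditions compatible is exactly where the clash between the two shift conventions --- and hence the Tate twist in the statement --- is resolved. A subsidiary technical point is checking that the cone of a filtered (or bifiltered) quasi-isomorphism is again one, which legitimises the connecting chain of $\Cone^\bullet(\phi)$; this rests on exactness of $\Gr^W$ on the relevant short exact sequences of complexes together with the long exact cohomology sequence.
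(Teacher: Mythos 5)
The paper offers no proof of this statement: it is imported verbatim from \cite[Theorem 3.22]{peters2008mixed}, with only the remark that the Tate twist in the quotient term compensates for the difference between the shift of Definition~\ref{def:shift} and the shift used implicitly in loc.\ cit. Your sketch reconstructs the standard argument behind that citation, and your bookkeeping is consistent with the paper's conventions: since $\bH^i(X,K^\bullet[1])\cong \bH^{i+1}(X,K^\bullet)(1)$ (Theorem~\ref{thm:MHSDeligne}), the ``naive'' quotient $\cK_1^\bullet[1]$ of the cone of Definition~\ref{def:mappingCone} is $\cK_1^\bullet[1](-1)$ in the paper's normalization, and with the weight filtration $W_m\Cone^n=W_{m-1}\cK_1^{n+1}\oplus W_m\cK_2^n$ and the \emph{unshifted} direct-sum Hodge filtration, your purity check (the $\phi$-component of the differential dies on $\Gr^W$, so $\Gr^W_m\Cone\cong \Gr^W_m\cK_2^\bullet\oplus(\Gr^W_{m-1}\cK_1^\bullet)[1](-1)$, of weight exactly $m$) is the right one. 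Two points deserve correction or caution. First, strict compatibility of $\phi$ with $W$ is neither part of the definition of a morphism of mixed Hodge complexes of sheaves nor needed: plain compatibility $\phi(W_{m-1}\cK_1^\bullet)\subseteq W_{m-1}\cK_2^\bullet$ already kills the $\phi$-component in $\Gr^W_m$, precisely because of the index shift.

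Second, and more substantively, the step ``the connecting chain for $\Cone^\bullet(\phi)$ is produced by functoriality'' silently assumes that $\phi$ intertwines the comparison pseudo-morphisms of $\cK_1^\bullet$ and $\cK_2^\bullet$ by squares that commute on the nose. In the formalism of \cite{peters2008mixed}, where the rational and complex parts are linked only by chains of filtered quasi-isomorphisms, these squares may commute only up to homotopy; the mixed cone is then built only after choosing such homotopies (they enter the comparison data of the cone), and it is canonical only once those choices are fixed --- this is exactly the caveat attached to \cite[Theorem 3.22]{peters2008mixed}. For the morphisms to which the paper applies the construction (e.g.\ multiplication by $s^m$ in Lemma~\ref{lem:cone}, coming from genuine sheaf-level maps compatible with all the data), the commutation is strict and your argument goes through as written; but a self-contained proof should either state strict commutation as a hypothesis or carry the homotopies through the construction. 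With that proviso, and keeping in mind that the Hodge filtration on the cone is the plain direct sum (which is what $\cK_1^\bullet[1](-1)$ yields under Definition~\ref{def:shift}), your proposal is a faithful account of the proof the paper delegates to the reference.
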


The main property that mixed Hodge complexes satisfy is that the hypercohomology of their rational part carries a canonical MHS, which behaves as follows after taking Tate twists or shifts.

\begin{theorem}[{\cite[Theorem 3.18]{peters2008mixed}, \cite[Remark 2.33]{mhsalexander}}]\label{thm:MHSDeligne}
    If \(K^\bullet\) is the rational part of a \(\Q\)-mixed Hodge complex of sheaves on a topological space \(X\), then for every \(i\ge 0\), \(\bH^i(X, K^\bullet)\) carries a MHS. Furthermore, \(\bH^i(X, K^\bullet(1)) \cong \bH^i(X, K^\bullet)(1)\), and \(\bH^i(X, K^\bullet[1]) \cong \bH^{i+1}(X, K^\bullet)(1)\).
\end{theorem}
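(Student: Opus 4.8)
The plan is to reduce everything to standard facts: the existence of the MHS is Deligne's theorem on mixed Hodge complexes, applied after resolving, while the behaviour under Tate twist and shift is a definition-chase.

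For the first assertion I would pass from the mixed Hodge complex of sheaves $\cK^\bullet$ (whose rational part is $K^\bullet$) to a mixed Hodge complex of $\Q$-vector spaces by applying a functorial flasque resolution (for instance the Godement resolution) and taking global sections; exactness of the resolution functor and its compatibility with the filtrations guarantee that the comparison quasi-isomorphisms and the purity of the weight-graded pieces survive, so $R\Gamma(X,\cK^\bullet)$ is again a mixed Hodge complex. Then I would invoke Deligne's theorem on mixed Hodge complexes of vector spaces, \cite[Theorem 3.18]{peters2008mixed} (originally in \cite{DeligneII}): the weight spectral sequence degenerates at $E_2$, the Hodge spectral sequence at $E_1$, and Deligne's two-filtrations lemma shows that $\mathrm{Dec}(W)$ and $F$ induce a MHS on $H^i(R\Gamma(X, \cK^\bullet)) = \bH^i(X, K^\bullet)$. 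This is the substantive input, and I would cite it rather than reprove it.

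For the Tate twist and shift assertions I would use that, by Definition~\ref{def:shift}, $\cK^\bullet(1)$ has the same underlying complex and comparison maps as $\cK^\bullet$, with the weight and Hodge filtrations reindexed exactly as in the Tate twist of a MHS (in the convention of \cite[Section 2.8]{mhsalexander}), and that $\cK^\bullet[1]$ is the degree shift of the underlying complex, with filtrations arranged --- following the convention of \cite[Lemma-Definition 2.35]{peters2008mixed} spelled out in \cite[Remark 2.33]{mhsalexander} --- so as to absorb a Tate twist by $1$ on hypercohomology. Since the resolution, the global sections functor, and the spectral-sequence construction above are all compatible with these reindexings of the filtrations and with the degree shift, reading off the effect on $\bH^i$ yields $\bH^i(X, K^\bullet(1)) \cong \bH^i(X, K^\bullet)(1)$ and $\bH^i(X, K^\bullet[1]) \cong \bH^{i+1}(X, K^\bullet)(1)$.

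The only point requiring real care --- and the one place an error could creep in --- is keeping the shift and twist conventions consistent: \cite{peters2008mixed} uses two incompatible notions of shift of a mixed Hodge complex and a $(2\pi i)$-normalised Tate twist, so one must adhere throughout to the conventions fixed in Definition~\ref{def:shift} and \cite[Section 2.8]{mhsalexander}. With that bookkeeping in place there is nothing further to prove.
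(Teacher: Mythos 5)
Your proposal is correct and matches what the paper does: the paper states this result without proof, deferring exactly to Deligne's theorem as presented in \cite[Theorem 3.18]{peters2008mixed} for the existence of the MHS on hypercohomology, and to the convention bookkeeping of \cite[Lemma-Definition 2.35]{peters2008mixed} and \cite[Remark 2.33]{mhsalexander} for the Tate twist and shift formulas, including the extra twist $(1)$ accompanying the shift. Your sketch (Godement resolution, global sections, spectral sequence degeneration, then filtration reindexing) is precisely the standard argument behind those citations, so there is nothing to correct.
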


\section{Summary of previous work}\label{sec:previousMHS}

\subsection{Thickened Mixed Hodge complexes}

  We will use the thickened mixed Hodge complex of sheaves from \cite{mhsalexander,abeliancovers}. We will not make use of its definition, which can be found in \cite[Definition 10.5]{abeliancovers}, so we will not recall it here. We will simply summarize the properties we are concerned with.

  \begin{theorem}\label{thm:MHC}
    Let \(U,f\) be as in Definition \ref{def:Uf}, let \(\cL,\oL\) be as in Definitions \ref{def:cL} and \ref{def:oL}, respectively. Let \(j\colon U\to X\) be a smooth compactification such that \(X\setminus U\) is a divisor with simple normal crossings. For every \(m\in \Z\setminus\{0\}\), there is a \(\Q\)-mixed Hodge complex of sheaves, whose rational part we will denote \(\sK^\bullet_m\), with the following properties.
  \begin{enumerate}
    \item\label{part:toX} The adjunction map induces a quasi-isomorphism \(\sK_m^\bullet \to R j_* j^{-1} \sK_m^\bullet\). 
    \item\label{part:nu} \(\sK_m^\bullet\) is a complex of sheaves of modules over $R_\infty$, and there is an \(R_\infty\)-linear quasi-isomorphism \(\nu_m\colon R_m\otimes_R \oL \to {j^{-1}}\sK^\bullet_m\). 
    \item\label{part:KmfromK1} As vector spaces (ignoring the differential), \(\sK^\bullet_m \cong R_m \otimes_\Q \sK^\bullet_1\).
    \item\label{part:inclusionProjection} For \(m_1\ge m_2>0\), the projection \(R_{m_1}\twoheadrightarrow R_{m_2}\) and its dual \(R_{-m_2}\hookrightarrow R_{-m_1}\) induce morphisms \(\sK^\bullet_{m_1} \twoheadrightarrow \sK^\bullet_{m_2}\) and \(
\sK^\bullet_{-m_2}\hookrightarrow \sK^\bullet_{-m_1}\), respectively. These come from morphisms of  mixed Hodge complexes of sheaves.

These agree, via \(\nu_{m_1},\nu_{m_2}\) and after applying \(j^{-1}\), with the corresponding morphisms\linebreak \(R_{m_1}\otimes_R \oL \twoheadrightarrow R_{m_2}\otimes_R \oL\) and \(R_{-m_2}\otimes_R \oL\hookrightarrow R_{-m_1}\otimes_R \oL\).
    \item\label{part:multiplication} Multiplication by \(s\), as a map \(R_m\to R_m\), induces a morphism \(\sK^\bullet_m(1) \to \sK^\bullet_m\), which is the rational part of a morphism of mixed Hodge complexes of sheaves. Here, \((1)\) denotes a Tate twist, i.e. \(\sK^\bullet(1)\) is the rational part of the Tate twisted mixed Hodge complex of sheaves. Via \(\nu_m\) this map corresponds to multiplication on \(R_m\otimes_R \oL\).
    \item\label{part:dual}If \(m> 0\), there are isomorphisms \(\bD_m\colon \sK^\bullet_m(1-m) \cong \sK^\bullet_{-m}\), induced by the isomorphism \(R_m\cong R_{-m}\) in Remark~\ref{rem:RmDualMultiplication}. These come from morphisms of mixed Hodge complexes of sheaves (again, the Tate twist \((1-m)\) is applied to the mixed Hodge complex). They agree with the corresponding isomorphisms \(R_m\otimes_R \oL \cong R_{-m} \otimes_R \oL\) via \(\nu_{\pm m}\).
  \end{enumerate}
\end{theorem}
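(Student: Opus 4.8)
The complex $\sK^\bullet_m$ is not new: it is the rational part of the thickened logarithmic mixed Hodge complex of sheaves built in \cite[Definition~10.5]{abeliancovers}, which generalizes the construction of \cite{mhsalexander}. So the plan is not to produce it from scratch but to recall its shape and then match the six items to what is proved in loc.\ cit. The construction starts from a Hodge--Deligne mixed Hodge complex of sheaves attached to $j\colon U\hookrightarrow X$ with boundary the simple normal crossings divisor $D=X\setminus U$ --- on the de Rham side one may take $\Omega^\bullet_X(\log D)$. The map $f$ has a logarithmic differential $d\log f=df/f\in\Gamma(X,\Omega^1_X(\log D))$ (with a corresponding Betti cocycle), a closed degree-one class representing $f$; tensoring the underlying complex with $R_m$ over $\Q$ and adding $s\cdot(d\log f)\wedge(-)$ to the differential produces $\sK^\bullet_m$. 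This deformation is rigged so that the cohomology of the result is $H^*(U,R_m\otimes_R\oL)$: the point is that $R_m\otimes_R\oL$ is a unipotent local system (as $s$ is nilpotent modulo $s^m$), that its twisted de Rham/Betti complex is precisely the thickened complex, and that the resulting $R_\infty$-linear comparison is the quasi-isomorphism $\nu_m$ of~\eqref{part:nu}. Item~\eqref{part:KmfromK1} is then immediate, as only the differential has been modified.

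For the remaining items I would argue by d\'evissage in $m$. The $R_\infty$-module $R_m$ has the finite filtration $0\subset s^{m-1}R_m\subset\cdots\subset sR_m\subset R_m$ with graded pieces $\cong R_1=\Q$, and this induces a finite filtration of $\sK^\bullet_m$ by subcomplexes, compatible with the weight and Hodge filtrations, with graded pieces $\cong\sK^\bullet_1$; the base case $m=1$ is the classical situation, where $s=0$ in $R_1$, the deformation term disappears, and $\sK^\bullet_1$ is essentially the Hodge--Deligne complex of $j\colon U\hookrightarrow X$ (representing $Rj_*\ul\Q_U$, since $R_1\otimes_R\oL\cong\ul\Q_U$). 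Thus~\eqref{part:toX} for $m=1$ is the standard fact that the logarithmic complex on $X$ computes $Rj_*$ of its restriction, and~\eqref{part:toX} for general $m$ follows by the five lemma along the above filtration (for negative index one runs the same argument with the dual filtration of $R_{-m}$, or invokes~\eqref{part:dual} once that is proved). Items~\eqref{part:inclusionProjection}, \eqref{part:multiplication} and~\eqref{part:dual} are all instances of functoriality of the thickening in the coefficient module: the surjection $R_{m_1}\twoheadrightarrow R_{m_2}$, the injection $R_{-m_2}\hookrightarrow R_{-m_1}$, the endomorphism $s\colon R_m\to R_m$, and the isomorphism $R_m\cong R_{-m}$ of Remark~\ref{rem:RmDualMultiplication} each induce a morphism of the corresponding thickened complexes, and one checks on stalks --- via $\nu_{\pm m}$ and Remark~\ref{rem:quotientVStensor} --- that these restrict to the named maps on $R_{\pm m}\otimes_R\oL$.

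The one genuinely substantive point, and the expected main obstacle, is to verify that all of these are morphisms of mixed Hodge \emph{complexes of sheaves}, i.e.\ that they respect the weight and Hodge filtrations; this requires unwinding the filtrations in \cite[Definition~10.5]{abeliancovers} and checking that the deforming term $s\cdot(d\log f)\wedge(-)$ moves weights and Hodge degrees in the controlled way that makes the filtered statements hold. The Tate twists in~\eqref{part:multiplication} and~\eqref{part:dual} are dictated by exactly this computation: since $H_1(\C^*,\Q)=\Q(1)$, the generator $s$ has weight $-2$ and Hodge type $(-1,-1)$, so $\Sym^j H_1(\C^*,\Q)=\Q(j)$ and $R_m\cong\bigoplus_{j=0}^{m-1}\Q(j)$ as a mixed Hodge structure; multiplication by $s$ lowers weight by $2$, which the twist $(1)$ corrects, while the isomorphism $R_m\cong R_{-m}$ of Remark~\ref{rem:RmDualMultiplication} raises weight by $2(m-1)$, which the twist $(1-m)$ corrects. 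Granting the filtered statements in the base case $m=1$ together with the compatibility of the filtrations with the module maps above, everything else is pure d\'evissage; the full verification is carried out in Section~10 of \cite{abeliancovers}, and in the range it covers in \cite{mhsalexander}.
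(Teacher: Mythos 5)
Your proposal is correct and follows essentially the same route as the paper: the theorem is a summary of properties of the thickened complex of \cite[Definition 10.5]{abeliancovers}, and like the paper you delegate the actual filtered verifications to that construction (the paper cites the specific statements: Remark 10.10/Construction 10.8 for \eqref{part:toX}--\eqref{part:nu}, Propositions 3.12--3.13 for \eqref{part:inclusionProjection}--\eqref{part:multiplication}, and Example 4.13 for \eqref{part:dual}). Your additional sketches (the d\'evissage in $m$ for \eqref{part:toX}, functoriality of the thickening in the coefficient module, and the Tate-twist weight bookkeeping via $R_m\cong\bigoplus_{j=0}^{m-1}\Q(j)$) are accurate and consistent with how those cited results are proved.
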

\begin{proof}
  The complex is defined in~\cite[Definition 10.5]{abeliancovers}. As for its properties:
  \begin{enumerate}
    \item This can be found in \cite[Remark 10.10]{abeliancovers}, which repeats the construction of \cite[Definition 6.1]{abeliancovers}.
    \item The quasi-isomorphism is defined in~\cite[Construction 10.8]{abeliancovers} and proved to be a quasi-\linebreak isomorphism in~\cite[Remark 10.10]{abeliancovers}.
    \item This is part of the definition.
    \item This is \cite[Proposition 3.13]{abeliancovers}. The fact that it agrees with \(\nu\) is a direct consequence of its definition.
    \item By~\cite[Proposition 3.12]{abeliancovers}, multiplication induces a morphism \(H_1(\C^*, \Q) \otimes_\Q \sK^\bullet_m\to \sK^\bullet_m\). Note that \(H_1(\C^*, \Q)\) is one-dimensional and pure of type \((-1,-1)\), so its tensor is a Tate twist by \((1)\). It agrees with \(\nu_m\) because \(\nu_m\) is \(R_\infty\) linear.
    \item This is~\cite[Example 4.13]{abeliancovers}.

  \end{enumerate}
\end{proof}

\subsection{The MHS on the quotients of Alexander modules}

\begin{remark}
In \cite[Remark 10.12]{abeliancovers}, it is shown that the definitions of \(\sK_m^\bullet\) and \(\nu_m\) agree with the analogous notions corresponding to the mixed Hodge complex of sheaves defined in \cite[Theorem 5.24]{mhsalexander}.
\end{remark}

The purpose of this paper is to show that the constructions in \cite{abeliancovers} and \cite{mhsalexander} produce the same MHS. In Definitions \ref{def:endowedMHS}, \ref{def:endowedMHSHomology} and \ref{def:proMHS} we will outline the construction of \cite{abeliancovers}.

\begin{definition}\label{def:endowedMHS}
    With the notations above, for every \(i\ge 0\) and \(m \in \Z\setminus\{0\}\), \(H^i(U, R_{m}\otimes_R \oL)\) carries a mixed Hodge structure defined in \cite[Definition 6.1]{abeliancovers}.
    \begin{itemize}
        \item If $m<0$, it is defined by giving \(\bH^i(X, \sK^\bullet_{m})\) a MHS via Theorem \ref{thm:MHSDeligne}, and translating it via the following isomorphisms:
    \begin{align*}
        H^i(U, R_{m}\otimes_R \oL)
        \overset{\nu_{m}}\cong
        \bH^i(U, j^{-1}\sK^\bullet_{m})
        \cong
        \bH^i(X, Rj_*j^{-1}\sK^\bullet_{m})
        \overset{\text{Theorem \ref{thm:MHC}} \eqref{part:toX}}\cong
        \bH^i(X, \sK^\bullet_{m}).
    \end{align*}
    \item If $m>0$, it is defined similarly but with a shift, namely through the following isomorphisms:
    \begin{align*}
        H^i(U, R_{m}\otimes_R \oL)
        \overset{\nu_{m}}\cong
        \bH^{i-1}(U, j^{-1}\sK^\bullet_{m}[1])
        \cong
        \bH^{i-1}(X, Rj_*j^{-1}\sK^\bullet_{m}[1])
        \overset{\text{Theorem \ref{thm:MHC}} \eqref{part:toX}}\cong
        \bH^i(X, \sK^\bullet_{m})(1).
    \end{align*}
      \end{itemize}
\end{definition}

\begin{remark}\label{rem:MHSmaps}
    Applying Theorem \ref{thm:MHC}, the following maps are MHS morphisms, for any \(m'\ge m \geq 1 \):
    \begin{itemize}
        \item The map \(H^i(U, R_{m'}\otimes_R \cL)\to H^i(U, R_{m}\otimes_R \cL)\) induced by the projection \(R_{m'} \to R_{m}\).
        \item The map \(H^i(U, R_{-m}\otimes_R \cL)\to H^i(U, R_{-m'}\otimes_R \cL)\) induced by the inclusion \(R_{-m} \to R_{-m'}\).
        \item Multiplication by \(s\), as a map \(H^i(U, R_{m}\otimes_R \cL)(1)\to H^i(U, R_{m}\otimes_R \cL)\). Note that by Theorem \ref{thm:MHSDeligne}, Tate twists on a mixed Hodge complex induce Tate twists on its hypercohomology.
    \end{itemize}
\end{remark}

 \begin{definition}[MHS on \(H_i(U, R_{m}\otimes_R \cL)\)]\label{def:endowedMHSHomology}
    With the notations above, for every \(i\ge 0\) and \(m\in \Z\setminus\{0\}\), \(H_i(U, R_{m}\otimes_R \cL)\) carries a mixed Hodge structure defined in \cite[Definition 6.6]{abeliancovers}. It is defined as follows. Combining Theorem \ref{thm:dualHomology}\eqref{part:perfectPairing} and Remark \ref{rem:RmcLDual}, we have an isomorphism
    \[
    H_i(U, R_m\otimes_R \cL) \xrightarrow[\cong]{\Theta} \Hom_\Q(H^i(U, R_{-m}\otimes_R \oL) , \Q).
    \]
    The codomain of $\Theta$ is endowed with the dual MHS of the one in Definition \ref{def:endowedMHS}. We define the MHS on the domain of $\Theta$ as the one induced by the MHS on $\Hom_\Q(H^i(U, R_{-m}\otimes_R \oL) , \Q)$ through the isomorphism $\Theta$. 

    Endowing the $\Q$-dual spaces with the corresponding $\Q$-dual MHS, the dual map $\Theta^\vee$ is a MHS morphism.
 \end{definition}

 \begin{remark}
     The dual maps of those in Remark \ref{rem:MHSmaps} are MHS morphisms. For every \(i\ge 0\) and \(m'\ge m\geq 1\):
     \begin{itemize}
         \item The map \(H_i(U,R_{m'} \otimes_R \cL)\to H_i(U,R_{m} \otimes_R \cL)\) induced by the projection \(R_{m'} \twoheadrightarrow R_m\) (the dual of the map in cohomology induced by the inclusion \(R_{-m}\hookrightarrow R_{-m'}\)).
         \item The map \(H_i(U,R_{-m} \otimes_R \cL)\to H_i(U,R_{-m'} \otimes_R \cL)\) induced by the inclusion \(R_{-m} \hookrightarrow R_{-m'}\) (the dual of the map in cohomology induced by the projection \(R_{m'}\twoheadrightarrow R_{m}\)).
         \item Multiplication by \(s\), as a map \(H_i(U, R_m\otimes_R \cL)(1) \to H_i(U, R_m\otimes_R \cL)\).
     \end{itemize}
 \end{remark}

 \begin{definition}\label{def:proMHS}
With the notations above, for every \(i\ge 0\), both \(H^i(U, R_\infty \otimes_R , \oL)\cong R_\infty \otimes_R H^i(U, \oL) \) and \(H_i(U, R_\infty\otimes_R \cL)\cong R_\infty \otimes_R H_i(U,\cL)\) carry a pro-MHS, i.e. each is an inverse limit of mixed Hodge structures. They are defined in \cite[Remark 6.5 and Definition 6.6]{abeliancovers}, in the following way: For every \(i\ge 0\) and \(m'\ge m>0\), the projection maps induce MHS morphisms \(H^i(U, R_{m'} \otimes_R \oL) \to H^i(U, R_m\otimes_R \oL)\) and \(H_i(U,R_{m'} \otimes_R \cL)\to H_i(U,R_{m} \otimes_R \cL)\). Therefore, the limits \( \varprojlim_m H^i(U,R_{m} \otimes_R \oL)\) and \(\varprojlim_m H_i(U,R_{m} \otimes_R \cL)\) each have pro-MHS. In \cite[Corollary 2.29]{abeliancovers}, it is shown that the natural maps
\begin{align*}
R_\infty \otimes_R H^i(U, \oL) &\to \varprojlim_m H^i(U, R_m\otimes_R \oL)&&\text{ and}
&R_\infty \otimes_R H_i(U, \cL) &\to \varprojlim_m H_i(U, R_m\otimes_R \cL)
\end{align*}
are isomorphisms, so the domain of each map  acquires a MHS as well.
\end{definition}

\begin{defprop}\label{defprop:MHSquotients}
  From Proposition~\ref{prop:UfvscL}, \(H_i(U, \cL)\cong H_i(U^f, \Q)\), so Definition~\ref{def:proMHS} also defines a MHS on \(R_\infty \otimes_R H_i(U^f, \Q)\) for every \(i\). From \cite[Corollary 6.16]{abeliancovers}, every vector space quotient of the form \(R_m\otimes_R  H_i(U^f, \Q)\) for \(m\ge 1\) has a (necessarily unique) MHS that makes the quotient map a MHS morphism. In particular, the quotient maps \(R_{m'}\otimes_R  H_i(U^f, \Q)\twoheadrightarrow R_m\otimes_R  H_i(U^f, \Q)\) for \(m'\ge m\geq 1\) are also MHS morphisms.
\end{defprop}

\begin{defprop}[{\cite[Propositions 8.3, 8.4]{abeliancovers}}]\label{defprop:MHSquotientsN}
    For any \(m, N\ge 1\), there is a canonical MHS on \(\frac{H_i(U^f, \Q)}{(t^N-1)^m H_i(U^f, \Q)}\) that agrees with the one on Definition-Proposition~\ref{defprop:MHSquotients} for \(N=1\). For any \(N'\) which is a multiple of \(N\) and any \(m'\ge m\), the quotients maps are MHS morphisms:
    \[
    \frac{H_i(U^f, \Q)}{(t^{N'}-1)^m H_i(U^f, \Q)}
    \twoheadrightarrow
    \frac{H_i(U^f, \Q)}{(t^N-1)^m H_i(U^f, \Q)}
    ;\quad
    \frac{H_i(U^f, \Q)}{(t^N-1)^{m'} H_i(U^f, \Q)}    \twoheadrightarrow
    \frac{H_i(U^f, \Q)}{(t^N-1)^m H_i(U^f, \Q)}.
    \]
\end{defprop}

\subsection{The MHS on the torsion part of Alexander modules}

In the remainder of this section we will outline the construction of the MHS in \cite{mhsalexander}. For a survey of the construction and main properties, see \cite{mhsSurvey}.

\begin{definition}\label{def:mhsalexander}
  For any \(i\ge 0\), there is a canonical MHS on \(\Tors_R H_i(U^f, \Q)\), that can be found in \cite[Corollary 5.26, Remark 5.27]{mhsalexander}. An outline of the construction follows.
\end{definition}

\begin{construction}[Construction of the MHS on the torsion part]\label{con:mhsalexander}

  By \cite[Proposition 2.24]{mhsalexander} (based on \cite[Proposition 4.1]{BudurLiuWang}), there exists some \(N\in \N\) such that \((t^N-1)^m\) annihilates the \(\Tors_R H_i(U^f, \Q)\) for all $i$ and for sufficiently large $m$. The index \(N\) subgroup \(\langle t^N\rangle \subset \pi_1(\C^*)\) induces a degree \(N\) cover of \(U\) that we denote \(U_N\). It has a map \(f_N\colon U_N\to \C^*\) (an \(N\)-th root of \(f\)), which induces an infinite cyclic cover \((U_N)^{f_N}\to U_N\). The generator of its deck transformation group \(\pi_1(\C^*)\) acts on \(H_i\left((U_N)^{f_N}, \Q\right)\) as \(t^N\), giving it the structure of a module over $\Q[t^{\pm N}]$. In \cite[Lemma 2.26]{mhsalexander}, it is proven that \(H_i(U^f, \Q) \cong H_i((U_N)^{f_N}, \Q) \) as \(\Q[t^{\pm N}]\)-modules, and this isomorphism is induced by an isomorphism $\theta_N:(U_N)^{f_N}\to U^f$ of complex analytic varieties. Therefore, the action of \(\pi_1(\C^*)\) by deck transformations on the Alexander module of \((U_N, f_N)\) is  unipotent. Moreover, $\theta_N$ induces an isomorphism
  $$
  \theta_N\colon\Tors_{\Q[t^{\pm N}]} H_i\left((U_N)^{f_N},\Q\right)_1\xrightarrow{\cong} \Tors_R H_i(U^f,\Q),
  $$
  where the subindex $1$ denotes the generalized eigenspace of eigenvalue $1$ for a generator of the deck group of the cover $(U_N)^{f_N}\to U_N$. Hence, it suffices to endow the domain of this isomorphism with a MHS, and use it to endow the codomain with a MHS. In other words, it suffices to consider the case where the $t$-action on $H_i(U^f,\Q)$ is unipotent, and endow  $\Tors_R H_i(U^f,\Q)_1=\Tors_R H_i(U^f,\Q)$ with a canonical MHS.

  The construction begins with the MHS on \(H^i(U, R_m\otimes_R \oL)\) from Definition~\ref{def:endowedMHS} in the case when $m\geq 1$ (see \cite[Remark 10.0]{abeliancovers}, which explains that the constructions of the MHS on \(H^i(U, R_m\otimes_R \oL)\) are the same in both \cite{mhsalexander} and \cite{abeliancovers}). For \(m \gg 1\), it is shown in \cite[Corollary 3.9]{mhsalexander} that the natural map  \(\Tors_R H^i(U, \oL)\to H^i(U, R_m\otimes_R \oL)\) is an embedding (provided the torsion is unipotent). Furthermore, in \cite[Corollary 5.19]{mhsalexander}, it is shown that its image is a sub-MHS, so \(\Tors_R H^i(U,\oL)\) acquires a MHS.

  Finally, one shows that
  \begin{equation}\label{eq:simpleTorsionDuality}
    \Tors_R H_i(U, \cL) \cong \Hom_\Q(\Tors_R H^{i+1} (U, \oL),\Q),
  \end{equation}
  so the left hand side has a MHS, dual to the one on \(\Tors_R (U, \oL)\).
\end{construction}

We will need to understand the last isomorphism, so we will write the definition in more detail.

\begin{proposition}[Duality of the torsion Alexander modules]\label{prop:ResUCT}
  The map \eqref{eq:simpleTorsionDuality} is defined from the following diagram, taking duals:
\begin{equation}\label{eq:torsionSide-general}
    \Hom_\Q(\TorsH, \Q) \xleftarrow[\cong]{\Res}
\Ext^1_R(\TorsH, R)
\xrightarrow[\cong]{\mathrm{UCT}}
    \Tors_R H^{i+1}(U, \oL).
  \end{equation}
  Here, ``UCT'' denotes the morphism from Theorem \ref{thm:UCT}. To define Res, we use the same injective resolution \(R \to K\to K/R\) as in said theorem, and we take the following composition:
  \[
    \Hom_\Q(\TorsH, \Q) \xleftarrow{\res_*}
    \Hom_R(\TorsH, K/R) \xrightarrow{K/R \to R[1]}
\Ext^1_R(\TorsH, R).
  \]
  Here, \(\res_*\) is the composition with \(\res\colon K\to \Q\) from Remark~\ref{rem:comparisonResidues} (or rather, its induced map $\res:K/R\to\Q$), and the second arrow is given by postcomposition with the map in the derived category (as in Theorem \ref{thm:UCT}).
\end{proposition}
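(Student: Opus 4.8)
The plan is to treat this statement for what it is: an explicit bookkeeping reformulation of the isomorphism \eqref{eq:simpleTorsionDuality} as it is constructed in \cite{mhsalexander} (see Construction~\ref{con:mhsalexander}). There, \eqref{eq:simpleTorsionDuality} is by construction the $\Q$-dual of the composite of the Universal Coefficient isomorphism $\Tors_R H^{i+1}(U,\oL)\cong\Ext^1_R(\TorsH,R)$ with the residue identification $\Ext^1_R(\TorsH,R)\cong\Hom_\Q(\TorsH,\Q)$. So the proof has two parts: first, to check that the maps $\Res$ and $\mathrm{UCT}$ in \eqref{eq:torsionSide-general} are isomorphisms, so that the zig-zag there determines an isomorphism $\Hom_\Q(\TorsH,\Q)\cong\Tors_R H^{i+1}(U,\oL)$ whose $\Q$-dual (using that $\TorsH$ is finite dimensional over $\Q$, being finitely generated torsion over the PID $R$) is the asserted map $\Tors_R H_i(U,\cL)\to\Hom_\Q(\Tors_R H^{i+1}(U,\oL),\Q)$; and second, to identify this $\Q$-dual with \eqref{eq:simpleTorsionDuality} of \cite{mhsalexander}.

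For the first part, $\mathrm{UCT}$ is handled directly by Theorem~\ref{thm:UCT}: applying it with $L=\cL$, so that $\Homm_R(\cL,\ul R)=\oL$ by Definition~\ref{def:oL} and $H_i(U,\cL)\cong H_i(U^f,\Q)$ by Proposition~\ref{prop:UfvscL}, the map $\mathrm{UCT}$ is injective with image $\Tors_R H^{i+1}(U,\oL)$; since its source $\Ext^1_R(\TorsH,R)$ is already all torsion, it is an isomorphism onto the target in \eqref{eq:torsionSide-general}. For $\Res$ I would decompose it as $\Res=\res_*\circ(K/R\to R[1])^{-1}$ and treat the two factors. Applying $\Hom_R(\TorsH,-)$ to the injective resolution $0\to R\to K\to K/R\to 0$ used in Theorem~\ref{thm:UCT}, and using that $\TorsH$ is torsion while $K$ is torsion-free and divisible (hence injective over $R$), one gets $\Hom_R(\TorsH,K)=0=\Ext^1_R(\TorsH,K)$, so the connecting map — which is exactly the derived-category morphism $K/R\to I^\bullet[1]\xrightarrow{\cong}R[1]$ of Theorem~\ref{thm:UCT} — is an isomorphism $\Hom_R(\TorsH,K/R)\xrightarrow{\cong}\Ext^1_R(\TorsH,R)$. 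That $\res_*$ is an isomorphism $\Hom_R(\TorsH,K/R)\xrightarrow{\cong}\Hom_\Q(\TorsH,\Q)$ is the statement that $K/R$, equipped with the residue $\res$ of Remark~\ref{rem:comparisonResidues}, represents the $\Q$-linear dual on finitely generated torsion $R$-modules; by the structure theorem this reduces to cyclic modules $R/(p(t)^k)$, where it follows from the partial-fraction description of $K/R$ and the nondegeneracy of the residue pairing, essentially the computation already recorded in Remarks~\ref{rem:RmDual} and~\ref{rem:comparisonResidues}. Hence $\Res$ is an isomorphism and \eqref{eq:torsionSide-general} yields the asserted isomorphism.

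For the second part, one identifies the $\Q$-dual of this isomorphism with \eqref{eq:simpleTorsionDuality} of \cite{mhsalexander}: the Universal Coefficient isomorphism there is the inverse of the map $\mathrm{UCT}$ recalled in Theorem~\ref{thm:UCT} (whose defining diagram is, by that theorem's statement, the same one appearing in \cite[Lemma 2.12]{mhsalexander}), and the residue identification there is $\Res$. The step I expect to be the main obstacle is organizational rather than mathematical: keeping the cohomological shift consistent throughout — the $[1]$ in $K/R\to R[1]$, the passage from $H_i$ to $H^{i+1}$, and the conventions of Proposition~\ref{prop:chainHomology} and Theorem~\ref{thm:dualHomology} — and making sure the precise derived morphism $K/R\to R[1]$ and the choice of residue agree with \cite{mhsalexander} rather than differing by a sign or by the twist of Remark~\ref{rem:RmDualMultiplication}. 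This is exactly why the definition is being spelled out here: once $K/R\to R[1]$ is fixed as in Theorem~\ref{thm:UCT} and $\res$ as in Remark~\ref{rem:comparisonResidues}, the remaining verification is a formal unwinding of definitions with no further choices, and it is this pinned-down description of $\Res$ and $\mathrm{UCT}$ that will be used in the proof of Lemma~\ref{lem:commutes}.
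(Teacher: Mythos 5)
Your proposal is correct and takes essentially the same route as the paper: Proposition~\ref{prop:ResUCT} is presented there without a separate proof, as a spelled-out recollection of how the isomorphism \eqref{eq:simpleTorsionDuality} is defined in \cite{mhsalexander}, and the verifications you supply (that $\mathrm{UCT}$ is an isomorphism onto the torsion via Theorem~\ref{thm:UCT}, that $K/R\to R[1]$ induces an isomorphism because $\Hom_R(\TorsH,K)=\Ext^1_R(\TorsH,K)=0$ with $K$ injective, and that $\res_*$ is an isomorphism by nondegeneracy of the residue pairing on finitely generated torsion $R$-modules) are exactly the standard facts the paper implicitly relies on from \cite{mhsalexander}. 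Your care about the shift and the choice of $\res$ is also consonant with the paper, whose stated purpose for this proposition is to pin down those conventions for later use in Lemma~\ref{lem:commutes}.
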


Construction \ref{con:mhsalexander} is fairly involved. However, using the work in \cite{EvaMoises}, we can skip the first step and compute part of the MHS without passing to a degree \(N\) cover.

\begin{theorem}[{\cite[Corollary 1.2]{EvaMoises}}]\label{thm:eigenspaces}
  Let \(\lambda\in \ov\Q\), and let \(g_\lambda(x)\in \Q[x]\) be its minimal polynomial, i.e. the monic irreducible polynomial that satisfies \(g_\lambda(\lambda)=0\). For any \(\R\)-module \(A\), we can consider its generalized eigenspace with eigenvalue \(\lambda\) or, more precisely, the sum of generalized eigenspaces with eigenvalues that are Galois conjugate to \(\lambda\):
  \[
    A_\lambda \coloneqq \{v\in A\mid \exists m\ge 0,\ g_\lambda(t)^m\cdot v = 0 \}.
  \]
The classification of modules over a PID shows that \(A\) splits as a direct sum of submodules of the form \(A_\lambda\). When this decomposition is applied to the MHS on \(\Tors_R H_i(U^f, \Q)\), one obtains a decomposition as a direct sum of MHS.
\end{theorem}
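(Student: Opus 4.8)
The plan is to reduce everything to a statement about idempotents. Write $A=\Tors_RH_i(U^f,\Q)$, with its MHS from \cite{mhsalexander}. Being a finitely generated torsion module over the PID $R$, $A$ is annihilated by some $h(t)=\prod_\lambda g_\lambda(t)^{m_\lambda}$ with the $g_\lambda$ pairwise coprime, and the Chinese Remainder Theorem produces idempotents $e_\lambda\in R$ (in particular, polynomials in $t^{\pm1}$) with $e_\lambda A=A_\lambda$ and $\sum_\lambda e_\lambda=\Id$ on $A$. Thus the statement is equivalent to the claim that \emph{each $e_\lambda\colon A\to A$ is a morphism of MHS}: once this is known, each $A_\lambda=\mathrm{im}(e_\lambda)$ is a subobject of $A$ in the abelian category of MHS, and strictness of MHS morphisms gives $W_kA=\bigoplus_\lambda(W_kA\cap A_\lambda)$ and $F^pA=\bigoplus_\lambda(F^pA\cap A_\lambda)$, i.e. $A=\bigoplus_\lambda A_\lambda$ in the category of MHS. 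The subtlety is that one cannot argue with the $R$-action on $A$ directly, since multiplication by $t$ is \emph{not} a morphism of MHS in general — by Theorem~\ref{thm:MHC}\eqref{part:multiplication} only multiplication by $s=\log t$ is, and only up to the Tate twist $(1)$, while a nonconstant polynomial in $t$ is a morphism for no single twist — so the decomposition must be seen through the geometry behind the construction.

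First I would pass to finite-dimensional quotients. Fix $N\ge1$ with $(t^N-1)^mA=0$ for $m\gg1$ (Construction~\ref{con:mhsalexander}) and, for such an $m$, put $Q=\frac{H_i(U^f,\Q)}{(t^N-1)^mH_i(U^f,\Q)}$. By Theorem~\ref{thm:intro}, $A\hookrightarrow H_i(U^f,\Q)\twoheadrightarrow Q$ is an injective morphism of MHS; being $R$-linear it respects primary decompositions. Since $(t^N-1)^m=\prod_{d\mid N}\Phi_d(t)^m$ has pairwise coprime factors, $Q=\bigoplus_{d\mid N}Q_{\zeta_d}$, and it suffices to prove that the semisimple part $t_s$ of the automorphism $t$ of $Q$ (a polynomial in $t$, of finite order dividing $N$ since $(t^N-1)^mQ=0$) is a morphism of MHS — then the primary idempotents of $Q$, being polynomials in $t_s$, are too. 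The unipotent part is harmless: $t^N$ is unipotent on $Q$ with $\log t^N=s_N$ acting as a morphism $Q(1)\to Q$ (Theorem~\ref{thm:MHC}\eqref{part:multiplication}, applied over the degree-$N$ cover), so $t_u=\exp(s_N/N)$, whence $t_u$ preserves $W$ and is the identity on $\Gr^W$.

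The case $m=1$ is geometric and I would do it first as a warm-up: $Q=\frac{H_i(U^f,\Q)}{(t^N-1)H_i(U^f,\Q)}$ embeds, as a sub-MHS, into $H_i(U_N,\Q)$ with its classical mixed Hodge structure (via $\cL\twoheadrightarrow R_1\otimes_R\cL\cong\underline\Q_{U_N}$, using the injectivity in Proposition~\ref{prop:TorsIntoHomology}), and this embedding is $t$-equivariant for the action of $t$ on $H_i(U_N,\Q)$ as the deck transformation $\bar\sigma$ of the degree-$N$ cyclic cover $U_N\to U$. Since $\bar\sigma$ is an algebraic automorphism of finite order, functoriality of Deligne's MHS makes $\bar\sigma_*$ a finite-order, hence semisimple, automorphism of MHS, so its (Galois-orbit) eigenspaces on $H_i(U_N,\Q)$ are sub-MHS; restricting to the $\bar\sigma$-stable sub-MHS $Q$ settles $m=1$ (and likewise with any divisor $d\mid N$ in place of $N$).

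The hard part is $m\gg1$: showing $t_s$ is a morphism of MHS on $Q=\frac{H_i(U^f,\Q)}{(t^N-1)^mH_i(U^f,\Q)}$, where one genuinely has to work with the thickened mixed Hodge complex $\sK^\bullet_m$ of Theorem~\ref{thm:MHC} for the pair $(U_N,f_N)$ and a $\bar\sigma$-equivariant normal-crossings compactification $X_N$. The deck transformation $\bar\sigma$ acts on $\sK^\bullet_m$, but only ``projectively'': pulling back by $\bar\sigma$ multiplies $f_N$ by a root of unity, which preserves the divisor data on $X_N$, yet iterating the induced isomorphism of the dual local system $N$ times returns multiplication by $t^N=\exp(s_N)$ rather than the identity. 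Hence $\bar\sigma$ does not act by a morphism of mixed Hodge complexes — consistently with $t$ failing to be a morphism of MHS — but I expect that its entire non-Hodge-theoretic deviation is carried by the unipotent factor $\exp(s_N/N)$ in the thickening direction, so that $t_s=t\cdot\exp(-s_N/N)$ \emph{is} induced by a morphism of mixed Hodge complexes $\sK^\bullet_m\to\sK^\bullet_m$; equivalently, the construction of $\sK^\bullet_m$ can be made $\Z/N$-equivariant after correcting by this unipotent factor. Granting this, $t_s$ is a morphism of MHS on $Q$, the primary idempotents $e_{\zeta_d}$ of $Q$ are morphisms of MHS, and restricting along the sub-MHS $A\hookrightarrow Q$ gives the decomposition of $A$. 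Making this equivariance precise — carefully comparing the $W$- and $F$-filtrations of $\sK^\bullet_m$ under $\bar\sigma$ and isolating the $s_N$-contribution — is where the real work lies.
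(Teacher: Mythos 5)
There is no proof to compare against in this paper: Theorem~\ref{thm:eigenspaces} is imported wholesale from \cite[Corollary 1.2]{EvaMoises}, so any blind proof is necessarily a different route. Your route, however, has two serious problems. First, it is circular relative to this paper's logic: you invoke Theorem~\ref{thm:intro} (equivalently Corollary~\ref{cor:mainN}) to know that \(\Tors_R H_i(U^f,\Q)\hookrightarrow \frac{H_i(U^f,\Q)}{(t^N-1)^mH_i(U^f,\Q)}\) is a morphism of MHS, but the proof of Theorem~\ref{thm:main} (hence of Theorem~\ref{thm:intro}) begins precisely by applying Theorem~\ref{thm:eigenspaces} to split off the eigenvalue-\(1\) part of the torsion. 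So your argument cannot serve as a proof of the statement as it sits in this paper; at best it would show that the decomposition for the quotients \(Q\) implies the decomposition for \(\Tors_R H_i(U^f,\Q)\) \emph{given} the main theorem, which is the reverse of the paper's logical order.

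Second, the heart of your argument in the case \(m\gg 1\) is the claim that the semisimple part \(t_s=t\cdot\exp(-s_N/N)\) is induced by an endomorphism of the thickened mixed Hodge complex \(\sK^\bullet_m\) (equivalently, that the construction can be made \(\Z/N\)-equivariant after correcting by the unipotent factor). You state this only as an expectation ("Granting this\ldots"), and it is exactly the nontrivial content: Theorem~\ref{thm:MHC} gives you only multiplication by \(s\) (with a Tate twist) and the transition maps in \(m\), not any action of the deck transformation \(\bar\sigma\) on \(\sK^\bullet_m\) compatible with \(W\) and \(F\); checking such an equivariance requires going into the definition of the thickening, which neither this paper nor your sketch does. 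A smaller gap of the same kind occurs already in your \(m=1\) warm-up, where you assert without argument that the MHS of Definition-Proposition~\ref{defprop:MHSquotientsN} on \(\frac{H_i(U^f,\Q)}{(t^N-1)H_i(U^f,\Q)}\subset H_i(U_N,\Q)\) is compatible with Deligne's MHS on \(H_i(U_N,\Q)\) together with its \(\bar\sigma\)-action. Until the equivariance statement for \(\sK^\bullet_m\) is actually proved (and the argument is rearranged so as not to rely on Theorem~\ref{thm:intro}), the proposal is not a proof; the strategy of reducing to "the primary idempotents are MHS morphisms" is reasonable, but the paper's source \cite{EvaMoises} is where that kind of work is carried out.
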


\begin{theorem}[{\cite[Corollary 4.2]{EvaMoises}}]\label{thm:tors1cohomology}
Consider the eigenspace with eigenvalue 1 of \(\Tors_R(H^i(U, \oL))\), with the MHS induced by the inclusion
\[
\Tors_R (H^i(U, \oL))_1 \hookrightarrow \Tors_R (H^i(U, \oL)),
\]
where \(\Tors_R (H^i(U, \oL))\) has the MHS from Definition \ref{def:mhsalexander} (whose construction involves a finite cover). Then, the inclusion (for \(m\gg 1\))
\[
\Tors_R(H^i(U, \oL))_1 \hookrightarrow 
H^i(U, R_{m}\otimes \oL)
\]
is a MHS morphism, where \(H^i(U, R_{m}\otimes \oL)\) has the MHS from Definition \ref{def:endowedMHS}.
\end{theorem}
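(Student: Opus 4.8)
The approach I would take is to compare two mixed Hodge structures carried by the eigenspace $\Tors_R(H^i(U,\oL))_1$: structure (A), the one it carries as a sub-MHS of $\Tors_R H^i(U,\oL)$ with the MHS of Definition~\ref{def:mhsalexander} — this is well-defined because, by Theorem~\ref{thm:eigenspaces} (together with the duality \eqref{eq:simpleTorsionDuality}), the generalized eigenspace decomposition of the torsion is a decomposition of MHS, so $\Tors_R(H^i(U,\oL))_1$ is simply a direct MHS summand — and structure (B), the one pulled back from the MHS of Definition~\ref{def:endowedMHS} on $H^i(U,R_m\otimes_R\oL)$ (for $m\gg 1$) along the embedding of Proposition~\ref{prop:TorsIntoHomology}. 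Since structure (A) is defined by passing to the degree-$N$ cover $U_N$ of Construction~\ref{con:mhsalexander}, on which the relevant deck generator acts unipotently on the torsion, the plan is to relate the direct construction on $U$ and the construction on $U_N$ through this cover.

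Write $p\colon U_N\to U$ for the degree-$N$ covering, $\cL_N,\oL_N$ for the corresponding local systems, and $R_N\coloneqq\Q[t^{\pm N}]$, $R_{N,m}\coloneqq R_N/(t^N-1)^m$. Since $\theta_N$ identifies $U^f$ with $(U_N)^{f_N}$ compatibly with the tower $(U_N)^{f_N}\to U_N\xrightarrow{p}U$, one gets $\cL=p_*\cL_N$ and, dually, $\oL=p_*\oL_N$ (both being the $R$-dual local system of $\cL$). Using the projection formula, the base change $R_{N,m}\otimes_{R_N}R=R/(t^N-1)^mR$, and the factorization $(t^N-1)^m=\prod_{[\lambda]}g_\lambda(t)^m$ over the Galois orbits $[\lambda]$ of $N$-th roots of unity (with $g_\lambda$ as in Theorem~\ref{thm:eigenspaces}), I obtain a canonical splitting of local systems on $U$, $p_*(R_{N,m}\otimes_{R_N}\oL_N)\cong\bigoplus_{[\lambda]}(R/g_\lambda(t)^mR)\otimes_R\oL$, whose $\lambda=1$ summand is exactly $R_m\otimes_R\oL$; hence $H^i(U_N,R_{N,m}\otimes_{R_N}\oL_N)\cong\bigoplus_{[\lambda]}H^i(U,(R/g_\lambda(t)^mR)\otimes_R\oL)$ as vector spaces, with $\lambda=1$ summand $H^i(U,R_m\otimes_R\oL)$. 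The heart of the matter is to promote this to a decomposition of MHS: I would show that the thickened mixed Hodge complex of sheaves for $(U_N,f_N)$, pushed forward along an induced map of good compactifications $X_N\to X$, splits compatibly with the above eigenspace decomposition as a direct sum of mixed Hodge complexes of sheaves, the $\lambda=1$ summand being quasi-isomorphic to $\sK_m^\bullet$ on $X$. This would identify $H^i(U,R_m\otimes_R\oL)$, with the MHS of Definition~\ref{def:endowedMHS}, with the $\lambda=1$ summand of $H^i(U_N,R_{N,m}\otimes_{R_N}\oL_N)$ as MHS.

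It then remains to run Construction~\ref{con:mhsalexander} on the cover while tracking the $\lambda=1$ part. By \cite[Corollary 5.19]{mhsalexander} applied to $(U_N,f_N)$, for $m\gg 1$ the map $\Tors_{R_N}H^i(U_N,\oL_N)\hookrightarrow H^i(U_N,R_{N,m}\otimes_{R_N}\oL_N)$ induced by $\oL_N\twoheadrightarrow R_{N,m}\otimes_{R_N}\oL_N$ is an embedding onto a sub-MHS, and, transported along $\theta_N$, this is precisely how structure (A) on $\Tors_R H^i(U,\oL)$ is defined. Pushing the surjection $\oL_N\twoheadrightarrow R_{N,m}\otimes_{R_N}\oL_N$ forward to $U$ and composing with the projection onto the $\lambda=1$ summand recovers the map $\oL\twoheadrightarrow R_m\otimes_R\oL$ that induces the embedding of Proposition~\ref{prop:TorsIntoHomology}; since for $m\gg 1$ the eigenvalue-$1$ generalized eigenspace of the torsion maps into the $\lambda=1$ summand (the component with minimal polynomial $t-1$), naturality identifies the $\lambda=1$ part of the $U_N$-embedding with the embedding $\Tors_R(H^i(U,\oL))_1\hookrightarrow H^i(U,R_m\otimes_R\oL)$. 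Combined with the MHS splitting of the previous step, this shows that the latter embedding is an isomorphism onto a sub-MHS, i.e. structures (A) and (B) agree on $\Tors_R(H^i(U,\oL))_1$, which is the assertion of the theorem.

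The step I expect to be the main obstacle is the covering comparison of thickened mixed Hodge complexes of sheaves: one must enter the explicit construction of these complexes in \cite{abeliancovers} (equivalently \cite{mhsalexander}) and verify that pushing forward along a finite covering and extracting the eigenvalue-$1$ summand produces, up to filtered quasi-isomorphism, the thickened complex downstairs, compatibly with the quasi-isomorphisms $\nu_m$ of Theorem~\ref{thm:MHC}. This is delicate bookkeeping, but it parallels the classical good behavior of Deligne's mixed Hodge complexes under finite étale morphisms; once it is available, the remainder is naturality together with the results quoted above.
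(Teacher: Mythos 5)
You should first note that the paper does not prove this statement at all: it is imported verbatim from \cite[Corollary 4.2]{EvaMoises}, so there is no internal proof to measure your argument against. Your covering-comparison strategy (relate the construction on \(U\) to the one on the finite cover \(U_N\), split \(p_*(R_{N,m}\otimes_{R_N}\oL_N)\cong \oL/(t^N-1)^m\oL\) by CRT into generalized eigenspaces, and identify the \(\lambda=1\) summand with \(R_m\otimes_R\oL\)) is indeed the natural route and is in the spirit of the cited work, and your bookkeeping at the level of local systems is essentially right.

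However, there is a genuine gap: the step you defer --- that the thickened mixed Hodge complex of \((U_N,f_N)\), pushed forward along a map of good compactifications \(X_N\to X\), splits compatibly with the CRT decomposition into direct summands of mixed Hodge complexes whose \(\lambda=1\) summand is filtered quasi-isomorphic to \(\sK^\bullet_m\) compatibly with \(\nu_m\) --- is not a routine verification but is essentially the entire content of the theorem, so as written your argument is a reduction rather than a proof. Two concrete obstacles are hidden there. First, the CRT projectors are polynomials in \(t\), and the only module structure the construction provides on the thickened complex (Theorem~\ref{thm:MHC}) is multiplication by \(s\), which is a morphism of mixed Hodge complexes only after a Tate twist; the full \(t\)-action needed for the splitting is the deck-transformation action, and for \(m\ge 2\) this action on \(\oL_N/(t^N-1)^m\oL_N\) is not of finite order, so one cannot simply invoke isotypic decomposition for a finite group. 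One must instead choose \(X_N\) equivariantly, show the deck action extends to a filtered automorphism of the pushed-forward complex intertwining the comparison maps \(\nu_{N,m}\), and check that the resulting summands are themselves mixed Hodge complexes. Second, the analogy with ``finite \'etale morphisms'' is imprecise: \(X_N\to X\) is finite but ramified along the boundary divisor, so the compatibility of \(p_*\) with the log-de Rham/weight filtration data, and the identification of the invariant-type summand with \(\sK^\bullet_m\) downstairs, require entering the explicit construction of \cite{abeliancovers}; nothing quoted in this paper (Theorem~\ref{thm:MHC} in particular) supplies that compatibility. Until that comparison is carried out, the asserted agreement of your structures (A) and (B) on \(\Tors_R(H^i(U,\oL))_1\) remains unproved.
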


The theorem above allows for the construction of the MHS on \(
\Tors_R(H^i(U, \oL))_1\) without the use of a finite cover. The theorem below allows translating this conclusion to homology as well.

\begin{theorem}[{\cite[Corollary 3.6]{EvaMoises}}]\label{thm:tors1homology}
The isomorphism \eqref{eq:torsionSide-general} in Proposition \ref{prop:ResUCT} restricts to the eigenspace with eigenvalue 1 of each component, to form the following chain of isomorphisms:
      \begin{equation}\label{eq:torsionSide}
    \Hom_\Q(\TorsH_1, \Q) \xleftarrow[\cong]{\Res}
\Ext^1_R(\TorsH_1, R)
\xrightarrow[\cong]{\mathrm{UCT}}
    \Tors_R H^{i+1}(U, \oL)_1.
  \end{equation}
  Furthermore, \(\TorsH_1\) is a sub-MHS of \(\TorsH\), and the map above is a MHS isomorphism between its dual MHS and the MHS from Theorem \ref{thm:tors1cohomology}.
\end{theorem}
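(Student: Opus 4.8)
The plan is to obtain both assertions from two facts: that the maps $\Res$ and $\mathrm{UCT}$ in \eqref{eq:torsionSide-general} are $R$-linear, and that the duality isomorphism \eqref{eq:simpleTorsionDuality} was built in Construction~\ref{con:mhsalexander} precisely as an isomorphism of mixed Hodge structures.

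First I would check $R$-linearity. For $\mathrm{UCT}$ this is immediate from the description in Theorem~\ref{thm:UCT}: every arrow appearing there is functorial in the first variable over $R$, or induced by a morphism in the derived category of $R$-modules, or the tensor--hom adjunction of Theorem~\ref{thm:dualHomology}\eqref{part:perfectPairing}, all of which are $R$-linear. For $\Res$, by Proposition~\ref{prop:ResUCT} it is the inverse of the canonical isomorphism $\Hom_R(\TorsH, K/R)\cong\Ext^1_R(\TorsH,R)$ (a connecting homomorphism of an $\Ext$ long exact sequence, hence $R$-linear) composed with $\res_*$, and $\res_*$ is $R$-linear by a one-line check: for $r\in R$, $\phi\in\Hom_R(\TorsH,K/R)$ and $v\in\TorsH$ one has $\res_*(r\phi)(v)=\res(\phi(rv))=(r\cdot\res_*\phi)(v)$, using only $R$-linearity of $\phi$. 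Since $\TorsH$ is a torsion $\Q[t^{\pm1}]$-module, the classification over the PID $R$ gives $\TorsH=\bigoplus_\lambda\TorsH_\lambda$ with all $\lambda\neq 0$; applying the additive functors $\Hom_\Q(-,\Q)$ and $\Ext^1_R(-,R)$, which carry $\TorsH_\lambda$ to a module again annihilated by a power of $g_\lambda(t)$, yields $\Hom_\Q(\TorsH,\Q)_1=\Hom_\Q(\TorsH_1,\Q)$ and $\Ext^1_R(\TorsH,R)_1=\Ext^1_R(\TorsH_1,R)$. An $R$-linear isomorphism carries a generalized eigenspace isomorphically onto the one with the same eigenvalue, so the isomorphisms $\mathrm{UCT}$ and $\Res$ of Proposition~\ref{prop:ResUCT} restrict to the chain \eqref{eq:torsionSide}, which is the first assertion.

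For the mixed Hodge structure statement, $\TorsH_1$ is a sub-MHS of $\TorsH$ by Theorem~\ref{thm:eigenspaces}, which says the eigenspace decomposition of $\TorsH$ is a decomposition of mixed Hodge structures. Since \eqref{eq:simpleTorsionDuality} is an $R$-linear isomorphism of mixed Hodge structures, and since the $\Q$-dual of a finite eigenspace decomposition of mixed Hodge structures is again the eigenspace decomposition of the $\Q$-dual, it follows that the eigenspace decomposition of $\Tors_R H^{i+1}(U,\oL)$ is likewise a decomposition of mixed Hodge structures, whose eigenvalue-$1$ summand carries exactly the MHS of Theorem~\ref{thm:tors1cohomology} (defined there as the sub-MHS induced by the inclusion $\Tors_R H^{i+1}(U,\oL)_1\hookrightarrow\Tors_R H^{i+1}(U,\oL)$). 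Being $R$-linear, \eqref{eq:simpleTorsionDuality} is the direct sum over $\lambda$ of mixed Hodge structure isomorphisms $\TorsH_\lambda\cong\Hom_\Q(\Tors_R H^{i+1}(U,\oL)_\lambda,\Q)$; extracting the $\lambda=1$ summand and taking $\Q$-duals then gives that the composite $\mathrm{UCT}\circ\Res^{-1}$ of \eqref{eq:torsionSide} is an isomorphism of mixed Hodge structures from $\Hom_\Q(\TorsH_1,\Q)$, with the $\Q$-dual of the MHS on $\TorsH_1$, to $\Tors_R H^{i+1}(U,\oL)_1$ with the MHS of Theorem~\ref{thm:tors1cohomology} — the remaining assertion.

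The points I expect to need care are the bookkeeping of the $R$-module structures on the various $\Hom$ and $\Ext$ groups — in particular confirming that the eigenvalue-$1$ part corresponds to the eigenvalue-$1$ part under $\Q$-duality (the transpose action, not a contragredient one) — and the well-definedness of $\res$ on $K/R$ entering the definition of $\Res$. On the eigenvalue-$1$ component the latter causes no trouble: there $\TorsH_1$ is annihilated by a power of $t-1$, so by Remark~\ref{rem:comparisonResidues} the relevant residue is the residue $\res_0$ at $s=0$ on $s^{-m}R_\infty/R_\infty$ of Remark~\ref{rem:RmDual}, which is unambiguous since $R_\infty$ contains no negative powers of $s$. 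Once $R$-linearity and the mixed-Hodge-structure compatibility of \eqref{eq:simpleTorsionDuality} are in hand, everything else is formal.
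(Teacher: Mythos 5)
Your first half—checking that $\Res$ and $\mathrm{UCT}$ are $R$-linear and then cutting out the generalized eigenvalue-one summands—is fine and essentially formal, granting Proposition~\ref{prop:ResUCT}. (For calibration: the paper itself gives no proof of this statement; it is imported wholesale from \cite[Corollary 3.6]{EvaMoises}, so the question is whether your derivation from the other quoted results actually closes the gap.)

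It does not, because of the Hodge-theoretic half. Your key premise is that \eqref{eq:simpleTorsionDuality} ``was built in Construction~\ref{con:mhsalexander} precisely as an isomorphism of mixed Hodge structures.'' That is true only in the unipotent case. For a general pair $(U,f)$, the MHS of Definition~\ref{def:mhsalexander} on $\TorsH$ (and likewise the MHS on $\Tors_R H^{i+1}(U,\oL)$ underlying Theorem~\ref{thm:tors1cohomology}) is \emph{defined} by passing to the finite cover $U_N$, applying the duality \eqref{eq:simpleTorsionDuality} for $(U_N,f_N)$ over $\Q[t^{\pm N}]$, and transporting the result by $\theta_N$; the duality for $(U,f)$ itself over $R=\Q[t^{\pm 1}]$ enters the definition nowhere. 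So the statement you take as ``by construction''--- that the $(U,f)$-level map $\mathrm{UCT}\circ\Res^{-1}$ respects these cover-defined MHS --- is, up to the formal eigenvalue-one restriction you do carry out correctly, exactly the content of the theorem. Proving it requires comparing the two Universal Coefficient/residue dualities, over $\Q[t^{\pm 1}]$ for $(U,f)$ and over $\Q[t^{\pm N}]$ for $(U_N,f_N)$, under $\theta_N$ and restriction of scalars (including how the residue changes with the variable $t^N$ versus $t$), or else comparing both directly with the thickened complexes as in Theorem~\ref{thm:tors1cohomology}; none of this is supplied, so the argument is circular at that point. (The well-definedness of $\res$ on $K/R$ that you flag is a lesser issue, inherited from the paper's own formulation of Proposition~\ref{prop:ResUCT}, but note your fix only treats the eigenvalue-one part, whereas your appeal to \eqref{eq:simpleTorsionDuality} as a MHS isomorphism concerns the full torsion module.)
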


\section{Auxiliary mixed Hodge complexes and structures}\label{sec:aux}

This section is devoted to endowing the cohomology of certain complexes of sheaves with canonical MHS using mixed Hodge complexes of sheaves. These new MHS will be used in Section~\ref{sec:main} to prove the main theorem (Theorem~\ref{thm:main}).

\begin{definition}\label{def:quotientsFm}
Let $m\geq 1$, and let $\cF^\bullet_m$ be the two-term complex $\left(R_\infty\otimes_R\oL\to s^{-m}R_\infty\otimes_R\oL\right)$ in degrees $-1$ and $0$, where the map between the two terms is the natural inclusion. For all $m'\geq 0$, consider its subcomplex $s^{m'}R_\infty\otimes_R\oL\xrightarrow{\Id} s^{m'}R_\infty\otimes_R\oL$. We define $\wt\cF^{\bullet}_{m,m'}$ as the quotient of $\cF^\bullet_m$ by this subcomplex, namely
$$
\wt\cF^\bullet_{m,m'}\coloneqq\left\{\begin{array}{ccr} \left(R_{m'}\otimes_R\oL\hookrightarrow \frac{s^{-m}R_\infty}{s^{m'}R_\infty}\otimes_R\oL\right)&\text{(in degrees $-1$ and $0$)} & \text{if $m'\geq 1$,}\\
\frac{s^{-m}R_\infty}{R_\infty}\otimes_R\oL &\text{(in degree $0$)}  & \text{if $m'= 0$,}
\end{array}\right.
$$
\end{definition}

\begin{remark}\label{rem:quotientsFm}
Note that the quotient maps are an infinite sequence of quasi-isomorphisms:  \[
       \cF^\bullet_m \to \cdots\to \wt\cF^\bullet_{m, m' + 1} \to \wt\cF^\bullet_{m, m'} \to \cdots \to \wt\cF^\bullet_{m, 0}.
    \] 
Indeed, the cokernel of $R_{m'}\otimes_R\oL\hookrightarrow \frac{s^{-m}R_\infty}{s^{m'}R_\infty}\otimes_R\oL$ is $\frac{s^{-m}R_\infty}{R_\infty}\otimes_R\oL$ for all $m'\geq 1$, the same as the cokernel of $R_{\infty}\otimes_R\oL\hookrightarrow s^{-m}R_\infty\otimes_R\oL$.
\end{remark}

Our next goal is to endow the hypercohomology groups of $\wt\cF^{\bullet}_{m,m'}$ with canonical MHS for all $m'\geq 0$ that are preserved by the quotient maps from Remark~\ref{rem:quotientsFm}, that is, such that those quotient maps induce MHS isomorphisms. For this, we will have to find a realization of $\wt\cF^{\bullet}_{m,m'}$ by a mixed Hodge complex of sheaves.

\begin{lemma}\label{lem:cone}
    Let $m',m\geq 1$, and let \(\sK^\bullet_{m''}\) be as in Theorem~\ref{thm:MHC} for all $m''\in\Z\setminus\{0\}$. The morphism $ \sK^\bullet_{m'}(1)
    \xrightarrow{s^m}
    \sK^\bullet_{m+m'}(1-m)$ induced by multiplication by $s^m$ from $R_{m'}\to R_{m+m'}$ is a morphism of mixed Hodge complexes of sheaves, where $(1)$ and $(1-m)$ denote Tate twists.
\end{lemma}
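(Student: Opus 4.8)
The plan is to factor the map $\sK^\bullet_{m'}(1)\xrightarrow{s^m}\sK^\bullet_{m+m'}(1-m)$ through maps that are already known to be morphisms of mixed Hodge complexes of sheaves by Theorem~\ref{thm:MHC}. The point is that all three building blocks are available: multiplication by $s$ (part~\eqref{part:multiplication}), the inclusion--projection maps between the $\sK^\bullet_{m''}$ for varying $m''$ (part~\eqref{part:inclusionProjection}), and the duality isomorphisms $\bD_{m''}\colon\sK^\bullet_{m''}(1-m'')\cong\sK^\bullet_{-m''}$ (part~\eqref{part:dual}). So the first thing I would do is recall how multiplication by $s^m$ from $R_{m'}\to R_{m+m'}$ decomposes algebraically: it is the composite $R_{m'}\twoheadrightarrow$ (nothing) — rather, the cleanest description is that $R_{m'}\xrightarrow{s^m}R_{m+m'}$ is the map sending $\alpha(s)\mapsto s^m\alpha(s)$, which factors as $R_{m'}\hookrightarrow R_{m+m'}$ (well, there is no natural such inclusion) — so instead I would factor through $R_{m+m'}\xrightarrow{s^m}R_{m+m'}$ precomposed with a lift. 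Concretely, $R_{m'}$ is a quotient of $R_{m+m'}$ via $R_{m+m'}\twoheadrightarrow R_{m'}$, and multiplication by $s^m$ on $R_{m+m'}$ kills $s^{m'}R_{m+m'}/s^{m+m'}R_{m+m'}$, hence descends; dually $s^m\colon R_{m+m'}\to R_{m+m'}$ has image inside $s^mR_{m+m'}\cong R_{m'}$ after shifting. The algebra to pin down is precisely that the square
\[
\begin{tikzcd}
R_{m+m'}\arrow[r,"s^m"]\arrow[d,twoheadrightarrow] & R_{m+m'}\\
R_{m'}\arrow[r,"s^m"] & R_{m+m'}\arrow[u,equal]
\end{tikzcd}
\]
does not quite commute as stated, so I would rather use $s^m\colon R_{m+m'}(m)\to R_{m+m'}$ and observe it factors as $R_{m+m'}\twoheadrightarrow R_{m'}\xrightarrow{s^m}R_{m+m'}$, which is exactly the statement that multiplication by $s^m$ on $R_{m+m'}$ factors through its quotient $R_{m'}$ (since $s^m\cdot s^{m'}R_{m+m'}=0$ in $R_{m+m'}$ requires $m+m'\le m+m'$, true).

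Granting that factorization, the second step is to lift it to sheaves: the projection $R_{m+m'}\twoheadrightarrow R_{m'}$ induces a morphism of mixed Hodge complexes $\sK^\bullet_{m+m'}\twoheadrightarrow\sK^\bullet_{m'}$ by Theorem~\ref{thm:MHC}\eqref{part:inclusionProjection}, and multiplication by $s$ iterated $m$ times, i.e. $s^m\colon\sK^\bullet_{m+m'}(m)\to\sK^\bullet_{m+m'}$, is a morphism of mixed Hodge complexes by Theorem~\ref{thm:MHC}\eqref{part:multiplication} (composed $m$ times, keeping track that each factor is a Tate twist by $(1)$). Therefore the composite $\sK^\bullet_{m+m'}(m)\to\sK^\bullet_{m'}(m)\to\sK^\bullet_{m+m'}$ — wait, I need the target to be $\sK^\bullet_{m+m'}(1-m)$ and the source $\sK^\bullet_{m'}(1)$. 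So I would instead route through the duals: apply $\bD_{m+m'}$ from part~\eqref{part:dual}, which gives $\sK^\bullet_{m+m'}(1-(m+m'))\cong\sK^\bullet_{-(m+m')}$, and use the inclusion $\sK^\bullet_{-m'}\hookrightarrow\sK^\bullet_{-(m+m')}$ from part~\eqref{part:inclusionProjection} together with $\bD_{m'}$: $\sK^\bullet_{m'}(1-m')\cong\sK^\bullet_{-m'}\hookrightarrow\sK^\bullet_{-(m+m')}\cong\sK^\bullet_{m+m'}(1-(m+m'))$. Twisting everything by $(m')$ turns this into a morphism of mixed Hodge complexes $\sK^\bullet_{m'}(1)\to\sK^\bullet_{m+m'}(1-m)$, since $1-(m+m')+m'=1-m$. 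By Remark~\ref{rem:RmDualMultiplication} and the compatibility of $\bD_{\bullet}$ with $\nu_{\bullet}$ asserted in Theorem~\ref{thm:MHC}\eqref{part:dual}, the underlying map on $\oL$-modules is multiplication by $s^m$ composed with the dual inclusion, which on $R_{m'}\to R_{m+m'}$ is exactly multiplication by $s^m$ — this is the bookkeeping point to check.

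So the key steps, in order, are: (1) identify multiplication by $s^m\colon R_{m'}\to R_{m+m'}$ with the composite of the $\bD$-duality isomorphisms and the natural inclusion $R_{-m'}\hookrightarrow R_{-(m+m')}$, up to the correct Tate twist, using Remark~\ref{rem:RmDualMultiplication}; (2) lift each piece to a morphism of mixed Hodge complexes of sheaves via Theorem~\ref{thm:MHC}\eqref{part:inclusionProjection} and~\eqref{part:dual}; (3) compose and apply a global Tate twist by $(m')$ to land in the desired bidegrees; (4) verify via part~\eqref{part:dual}'s compatibility with $\nu_{\pm m''}$ that the resulting morphism is the claimed multiplication-by-$s^m$ map on the $\oL$-level, which is what the lemma's "induced by" clause requires. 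The main obstacle I anticipate is purely bookkeeping: matching the Tate twists exactly — each $\bD_{m''}$ introduces a twist $(1-m'')$, the inclusion introduces none, and one must confirm $(1-m')-(1-(m+m'))=m$ so that after twisting by $(m')$ the source becomes $(1)$ and the target becomes $(1-m)$ — and making sure that the direction of the duality identification (Remark~\ref{rem:RmDualMultiplication} says the composite $\frac{s^{-m}R_\infty}{R_\infty}\xrightarrow{\cong}R_{-m}\xrightarrow{\cong}R_m$ is multiplication by $s^m$) is applied consistently so that the underlying algebraic map really is multiplication by $s^m$ and not, say, by $s^{m}$ twisted by some automorphism. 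There is no deep content beyond Theorem~\ref{thm:MHC}; the lemma is essentially an assembly of its parts.
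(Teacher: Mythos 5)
Your final route—conjugating the inclusion $\sK^\bullet_{-m'}\hookrightarrow\sK^\bullet_{-(m+m')}$ from Theorem~\ref{thm:MHC}\eqref{part:inclusionProjection} by the duality isomorphisms $\bD_{m'}$, $\bD_{m+m'}$ of part~\eqref{part:dual}, twisting globally by $(m')$, and then checking via Remark~\ref{rem:RmDualMultiplication} that the induced map is multiplication by $s^m$—is exactly the paper's proof, including the Tate-twist arithmetic $1-(m+m')+m'=1-m$ and the basis verification $(s^i)^\vee\mapsto(s^i)^\vee$ corresponding to $s^{m'-1-i}\mapsto s^{m+m'-1-i}$. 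The earlier attempts to factor through $R_{m+m'}\xrightarrow{s^m}R_{m+m'}$ are correctly discarded and do not affect the argument, so this is essentially the same proof.
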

\begin{proof}
Let us use the isomorphisms (resp. morphism) of mixed Hodge complexes of sheaves from Theorem~\ref{thm:MHC}~\eqref{part:dual} (resp. Part~\eqref{part:inclusionProjection}, with a Tate twist by \((-m)\)). The rational part of these isomorphisms (resp. morphism) are the vertical (resp. top horizontal) arrows in Diagram~\eqref{eq:defOfsC} below. Note that the vertical arrows agree with multiplication by \(s^m\), by Remark~\ref{rem:RmDualMultiplication}. The dashed arrow below is the only morphism $\sK_m^\bullet(1)\to \sK^\bullet_{m+m'}(1-m)$ that makes the diagram commute, and, since the vertical arrows are the rational part of isomorphisms of mixed Hodge complexes of sheaves, the dashed arrow is the rational part of a morphism of mixed Hodge complexes of sheaves. We want to see that the multiplication by $s^m$ morphism in the statement of this lemma coincides with the dashed arrow below. This can be checked on the following bases: If we consider the bases \(\{1,s,s^2,\ldots \}\) of \(R_{m'}\) and \(R_{m+m'}\), and their corresponding dual bases \(\{1^\vee, s^\vee, \ldots ,\}\), the diagram above maps the elements as pictured in the right side of the diagram:
    \begin{equation}\label{eq:defOfsC}
      \begin{tikzcd}[row sep = 1.4em]
        \sK^\bullet_{-m'}(m') 
        \arrow[r, hookrightarrow]
        \arrow[d, "\cong"', "\bD_{m'}^{-1}(m')"]
        &
        \sK^\bullet_{-m-m'}(m') 
        \arrow[d, "\cong"', "\bD_{m+m'}^{-1}(m')"]
        &
        (s^i)^\vee
        \arrow[r, mapsto]
        \arrow[d, mapsto]
        &
        (s^i)^\vee
        \arrow[d, mapsto]
        \\
        \sK^\bullet_{m'}(1)
\arrow[r, dashrightarrow]
 &
    \sK^\bullet_{m+m'}(1-m)
 &
        s^{m' - 1 - i}
\arrow[r, dashrightarrow]
 &
 s^{m+m'-1-i}.
      \end{tikzcd}
    \end{equation}
\end{proof}

\begin{definition}[The mixed cone]\label{def:mixedCone}
Let $m\geq 1$ and let $m'\geq 0$. If $m'\geq 1$, let \(\sC^\bullet_{m,m'}\) be the cone of \(
    \sK^\bullet_{m'}(1)
    \xrightarrow{s^m}
    \sK^\bullet_{m+m'}(1-m)
    \), as in Definition~\ref{def:mappingCone}. In particular, as a sheaf, and not taking into account filtrations or the differential,
    $$
    \sC^p_{m,m'}=\sK^{p+1}_{m'}\oplus \sK^{p}_{m+m'} \quad\text{for all $p$.}
    $$
Since, by Lemma~\ref{lem:cone}, \(
    \sK^\bullet_{m'}(1)
    \xrightarrow{s^m}
    \sK^\bullet_{m+m'}(1-m)
    \) is the rational part of a morphism of mixed Hodge complexes of sheaves given by multiplication by $s^m$, \(\sC^\bullet_{m,m'}\) is also the rational part of a mixed Hodge complex of sheaves, namely of the mixed cone of multiplication by $s^m$, by Theorem \ref{thm:mixedCone}.

    If $m'=0$, we let $\sC^\bullet_{m,0}\coloneqq \sK_m^\bullet(1-m)$, which is the rational part of a mixed Hodge complex of sheaves (and can also be interpreted as the cone of $0\to \sK_m^\bullet(1-m)$ following the definition of $\sC^\bullet_{m,m'}$ for $m'\geq 1$).
\end{definition}

\begin{remark}\label{rem:lesMixedCone}
    By Theorem~\ref{thm:mixedCone}, the short exact sequence of complexes of sheaves
    $$
    0\to \sK^\bullet_{m+m'}(1-m)\to \sC^\bullet_{m,m'}\xrightarrow{c} \sK^\bullet_{m'}[1]\to 0
    $$
    (where the connecting morphism $c$ is given by the projection onto the first summand, as in Definition \ref{def:mappingCone}) induces a long exact sequence of morphisms of MHS
    $$
    \cdots\to \mathbb{H}^i(U,\sK^\bullet_{m+m'})(1-m)\to \mathbb{H}^i(U,\sC^\bullet_{m,m'})\to \mathbb{H}^{i+1}(U,\sK^\bullet_{m'})(1)\to\cdots
    $$
\end{remark}

\begin{defprop}[The MHS on $\mathbb{H}^i(U,\wt\cF^{\bullet}_{m,m'})$ and $\mathbb{H}^i(U,\cF^{\bullet}_{m})$]\label{defprop:aux}
Let $m\geq 1$ and $m'\geq 0$. Let $\wt\cF^{\bullet}_{m,m'}$ be the complex of sheaves from Definition~\ref{def:quotientsFm}. Let \(\sC^\bullet_{m,m'}\) be the complex of sheaves from Definition~\ref{def:mixedCone}, which is the rational part of a mixed Hodge complex of sheaves. Let $\nu_{m''}$ be the morphism from Theorem~\ref{thm:MHC}~\eqref{part:nu} for all $m''\in\Z\setminus\{0\}$, and let $\pairing_m$ be the morphism from Remark~\ref{rem:RmDual}. Then, the following hold:
\begin{enumerate}
    \item\label{part:diagramTwoRows} Suppose that $m'\geq 1$. Then, the following diagram commutes.
    \begin{equation}\label{eq:Nu}
      \begin{tikzcd}[row sep =1.4em, column sep = 5em]
        R_{m'} \otimes_R \oL 
      \arrow[d, hookrightarrow]
      \arrow[r, equals]
      &
      R_{m'} \otimes_R \oL 
      \arrow[d, "s^m"]
      \arrow[r, "\nu_{m'}"]
      &
      j^{-1} \sK^\bullet_{m'}(1)
      \arrow[d, "s^m"]
      \\
      \displaystyle\frac{s^{-m}R_\infty}{s^{m'} R_\infty} \otimes_R \oL
      \arrow[r, "\cong"', "s^{m}"]
      & 
      R_{m+m'} \otimes_R \oL 
      \arrow[r, "\nu_{m+m'}"']
      &
      j^{-1} \sK^\bullet_{m+m'}(1-m).
    \end{tikzcd}
    \end{equation}
    \item\label{part:quisos} The horizontal arrows in diagram~\eqref{eq:Nu} are all quasi-isomorphisms. The same holds for the second row of~\eqref{eq:Nu} if $m'=0$. 
    \item\label{part:coneColumn} Suppose that $m'\geq 1$. If we interpret the first column of diagram~\eqref{eq:Nu} as a morphism of complexes of sheaves concentrated at degree $0$, the cone of this morphism coincides with $\wt\cF_{m,m'}^\bullet$.  The cone of the last column of diagram~\eqref{eq:Nu} is $j^{-1}\sC_{m,m'}^\bullet$.
    \item\label{part:defNu}

    Suppose that $m'\geq 1$. Diagram \eqref{eq:Nu} induces a quasi-isomorphism between the cones of the first and last columns, which we denote by
    $$
    \mathcal{V}_{m,m'}\colon \wt\cF_{m,m'}^\bullet\to j^{-1}\sC_{m,m'}^\bullet.
    $$
    In degree $-1$ it coincides with the map  $\nu_{m'}$, and in degree $0$ it corresponds to $0$ in the first summand of $j^{-1}\sC_{m,m'}^0=j^{-1}\sK^1_m\oplus j^{-1}\sK^0_{m+m'}$ and \(\nu_{m+m'}\circ s^m\) in the second summand.
    \item\label{part:defNu0} If $m'=0$, the quasi-isomorphism $\mathcal{V}_{m,0}\colon \wt\cF_{m,0}^\bullet\to j^{-1}\sC_{m,0}^\bullet$, is defined as \(\nu_{m}\circ s^m\). This agrees with the interpretation for $m'\geq 1$ if we switch diagram~\eqref{eq:Nu} for a diagram in which the first row is equal to $0$ and the second row remains the same (making $m'=0$).
    \item\label{part:defMHSquotient} The sequence of quasi-isomorphisms
    $$Rj_*\wt\cF^\bullet_{m,m'}\xrightarrow[\cong]{Rj_*\mathcal{V}_{m,0}}j^{-1}Rj_*j^{-1}\sC_{m,m'}^\bullet\xleftarrow[\cong]{\text{adjunction }\Id\to Rj_*j^{-1}}\sC_{m,m'}^\bullet$$ endows $\mathbb{H}^i(U,\wt\cF^\bullet_{m,m'})$ with canonical MHS for all $i$.
    \item\label{part:defMHSF} The quotient maps from~\eqref{rem:quotientsFm} induce isomorphisms of MHS in cohomology
    $$
    \cdots \xrightarrow{\cong}\mathbb{H}^i(U,\wt\cF^\bullet_{m,3})\xrightarrow{\cong}\mathbb{H}^i(U,\wt\cF^\bullet_{m,2})\xrightarrow{\cong}\mathbb{H}^i(U,\wt\cF^\bullet_{m,1})\xrightarrow{\cong}\mathbb{H}^i(U,\wt\cF^\bullet_{m,0}).
    $$
    This sequence endows $\mathbb{H}^i(U,\cF^\bullet_{m})$ with a canonical MHS.
\end{enumerate}
\end{defprop}

\begin{proof}
\begin{enumerate}
    \item The left hand square clearly commutes. The second square commutes because it fits into the following diagram, by Theorem \ref{thm:MHC}\eqref{part:KmfromK1}:
    \[
    \begin{tikzcd}[row sep = 1.5em]
        R_{m+m'}\otimes_R \oL
        \arrow[r,twoheadrightarrow]
        \arrow[d, "\Id\otimes \nu_{m+m'}"]
        &
        R_{m'}\otimes_R \oL
        \arrow[r, hookrightarrow, "s^m"]
        \arrow[d, "\Id\otimes \nu_{m+m'}"]
        &
        R_{m+m'}\otimes_R \oL
        \arrow[d, "\Id\otimes \nu_{m+m'}"]
        \\
        R_{m+m'}\otimes_R \sK^\bullet_{1}
        \arrow[r,twoheadrightarrow]
        &
        R_{m'}\otimes_R \sK^\bullet_{1}
        \arrow[r, hookrightarrow, "s^m"]
        &
        R_{m+m'}\otimes_R \sK^\bullet_{1}.
        \end{tikzcd}
    \]
    Note that both horizontal compositions are multiplication by \(s^m\), so the outer rectangle commutes by Theorem \ref{thm:MHC}  \eqref{part:multiplication}. The left hand square commutes by Theorem \ref{thm:MHC} \eqref{part:inclusionProjection}, so the right hand square must commute, taking into account that the left horizontal arrows are surjective and the right horizontal arrows are injective.
    \item The arrow \(s^m\) is induced by an $R_\infty$-module isomorphism on the first factors of the tensor products, so it is an isomorphism of sheaves. For the remaining arrows, this is the content of Theorem~\ref{thm:MHC}~\eqref{part:nu}.
    \item This is immediate from the definitions.
    \item The precise description of $\mathcal{V}_{m,m'}$ follows from the definition. Since both rows are quasi-isomorphisms, the five lemma implies that the map induced between the cones of both rows is also a quasi-isomorphism.
    \item $\mathcal{V}_{m,0}$ is a quasi-isomorphism by part~\eqref{part:quisos}.
    \item We need to show that the adjunction $\Id\to Rj_*j^{-1}$ applied to the complex of sheaves $\sC^\bullet_{m,m'}$ yields a quasi-isomorphism. By Theorem~\ref{thm:MHC}~\eqref{part:toX}, the adjunction induces quasi-isomorphisms \(\sK_{m'}^\bullet\cong R j_*j^{-1}\sK_{m'}^\bullet \) and \(\sK_{m+m'}^\bullet\cong R j_*j^{-1}\sK_{m+m'}^\bullet\), so the five lemma implies that \(\sC^\bullet_{m,m'}\cong Rj_*j^{-1} \sC^\bullet_{m,m'}\). Therefore, \(\sC^\bullet_{m,m'}\) endows the hypercohomology of \(\wt \cF^\bullet_{m,m'}\) with a MHS, via the isomorphisms:
    \[
    \bH^i(U, \wt \cF^\bullet_{m,m'})
    \overset{\cV_{m,m'}}\cong 
        \bH^i(U, j^{-1}\sC^\bullet_{m,m'})
    \cong 
        \bH^i(X, Rj_*j^{-1}\sC^\bullet_{m,m'})
    \cong 
        \bH^i(X,\sC^\bullet_{m,m'}).
    \]
    \item Let us show that for different \(m'\) the quasi-isomorphisms \(\cV_{m,m'}\) are compatible, in the sense that if \(m''\ge m'\), the quotient map \(\wt\cF^\bullet_{m, m''}\to \wt \cF^\bullet_{m, m'}\) induces the obvious quotient map \(j^{-1}\sC^\bullet_{m,m''}\to j^{-1}\sC^\bullet_{m,m'}\). We first do this for the rows of~\eqref{eq:Nu} separately. For the second row of~\eqref{eq:Nu}, we must check that the following commutes:
    \[
    \begin{tikzcd}[row sep =1.4em, column sep = 4.5em]
      \frac{s^{-m}R_\infty}{s^{m''} R_\infty} \otimes_R \oL
      \arrow[d, twoheadrightarrow, "\text{projection}"]
      \arrow[r, "\cong"', "s^{m}"]
      &
      R_{m+m''} \otimes_R \oL
      \arrow[r, "\nu_{m+m''}"]
      \arrow[d, twoheadrightarrow, "\text{projection}"]
      &
      j^{-1} \sK^\bullet_{m+m''}(1-m)
      \arrow[d, twoheadrightarrow, "\text{projection}"]
      \\
      \frac{s^{-m}R_\infty}{s^{m'} R_\infty} \otimes_R \oL
      \arrow[r, "\cong"', "s^{m}"]
      &
      R_{m+m'} \otimes_R \oL
      \arrow[r, "\nu_{m+m'}"]
      &
      j^{-1} \sK^\bullet_{m+m'}(1-m).
    \end{tikzcd}
    \]
    It is clear for the left hand square, and the right hand square is again Theorem \ref{thm:MHC}\eqref{part:inclusionProjection}. Moreover, note that the projection $\sK^\bullet_{m+m''}(1-m)\to \sK^\bullet_{m+m'}(1-m)$ is the rational part of a morphism of mixed Hodge complexes of sheaves by Theorem~\ref{thm:MHC}~\eqref{part:inclusionProjection}. The proof for the first row of~\eqref{eq:Nu} is the same, as it is the particular case of \(m=0\), and also note that the projection $\sK^\bullet_{m''}(1)\to \sK^\bullet_{m'}(1)$ is the rational part of a morphism of mixed Hodge complexes of sheaves by Theorem~\ref{thm:MHC}~\eqref{part:inclusionProjection}. Since both rows commute with the quotient maps, the corresponding map of cones commutes as well, and the corresponding map between $j^{-1}\sC^\bullet_{m,m''}$ and $j^{-1}\sC^\bullet_{m,m'}$ is $j^{-1}$ applied to the natural projection between $\sC^\bullet_{m,m''}$ and $\sC^\bullet_{m,m'}$, which is the rational part of a morphism of mixed Hodge complexes of sheaves. We have shown that the sequence \(\wt\cF^\bullet_{m,m'}\) corresponds through the quasi-isomorphisms $\mathcal{V}_{m,m'}$ to $j^{-1}$ applied to the following chain of natural projections,
    \[
      \cdots \twoheadrightarrow  \sC^{\bullet}_{m,2} \twoheadrightarrow 
      \sC^{\bullet}_{m,1}  \twoheadrightarrow  \sC^\bullet_{m,0} 
    \]
    and all of these projections are the rational part of a morphism of mixed Hodge complexes of sheaves, so they induce MHS morphisms in cohomology. In fact, these MHS morphisms will be isomorphisms by Remark~\ref{rem:quotientsFm}.
\end{enumerate}
\end{proof}

  \section{Proof of the main theorem}\label{sec:main}

\begin{theorem}\label{thm:main}
  Let \(U\), \(f\) and \(U^f\) be as in Definition \ref{def:Uf}. Let \(m\ge 1\). Let \(R = \Q[\pi_1(\C^*)]\cong \Q[t^{\pm 1}]\). Consider the MHS on \(\Tors_R H_i(U^f,\Q)\) from Definition~\ref{def:mhsalexander}, and the MHS on \(\frac{H_i(U^f, \Q)}{(t-1)^m H_i(U^f, \Q)}\) from Definition~\ref{def:proMHS}. Since they are a submodule and a quotient of the Alexander module, respectively, there is a natural composition map:
  \begin{equation}\label{eq:mainThm}
    \Tors_R H_i(U^f, \Q) \hookrightarrow H_i(U^f, \Q) \twoheadrightarrow \frac{H_i(U^f, \Q)}{(t-1)^mH_i(U^f, \Q)}.
  \end{equation}
  This composition map is a MHS morphism for all $m\geq 1$.
\end{theorem}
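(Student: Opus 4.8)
The plan is to transfer the statement to cohomology, where the mixed Hodge complexes of Section~\ref{sec:aux} live, and there to recognize the relevant map as the connecting morphism of the long exact sequence of a mixed cone — which is a morphism of MHS automatically.

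\emph{Reductions.} Since for $M\ge m$ the quotient $\frac{\Alex}{(t-1)^M\Alex}\twoheadrightarrow\frac{\Alex}{(t-1)^m\Alex}$ is a MHS morphism (Definition-Proposition~\ref{defprop:MHSquotients}) and~\eqref{eq:mainThm} for $m$ is~\eqref{eq:mainThm} for $M$ followed by it, we may assume $m=M$ with $M$ as large as we please. By Theorem~\ref{thm:eigenspaces} the eigenspace decomposition $\Tors_R\Alex=\bigoplus_\lambda(\Tors_R\Alex)_\lambda$ is one of MHS, and each $\lambda\ne1$ summand dies in $\frac{\Alex}{(t-1)^M\Alex}$ (where $(t-1)^M$ acts invertibly on it), so~\eqref{eq:mainThm} factors through the MHS projection onto $(\Tors_R\Alex)_1$ and it suffices to show that $(\Tors_R\Alex)_1\hookrightarrow\frac{\Alex}{(t-1)^M\Alex}$ is a MHS morphism for $M\gg1$. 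Using $\Alex\cong H_i(U,\cL)$ (Proposition~\ref{prop:UfvscL}) and the monomorphism $\frac{\Alex}{(t-1)^M\Alex}\hookrightarrow H_i(U,R_M\otimes_R\cL)$ of Proposition~\ref{prop:TorsIntoHomology}, which by~\cite{abeliancovers} is a morphism of MHS (hence realizes its source as a sub-MHS of its target), it is in turn enough to show that the composite $\iota_M\colon(\Tors_R\Alex)_1\hookrightarrow H_i(U,R_M\otimes_R\cL)$ is a MHS morphism. Finally dualize over $\Q$: by Remark~\ref{rem:RmcLDual} and Theorem~\ref{thm:dualHomology}\eqref{part:perfectPairing}, $\Hom_\Q(H_i(U,R_M\otimes_R\cL),\Q)\cong H^i(U,R_{-M}\otimes_R\oL)$ compatibly with the MHS of Definition~\ref{def:endowedMHSHomology}, and by Theorem~\ref{thm:tors1homology} (via~\eqref{eq:torsionSide}) $\Hom_\Q((\Tors_R\Alex)_1,\Q)\cong\Tors_R H^{i+1}(U,\oL)_1$ compatibly with the MHS of Theorem~\ref{thm:tors1cohomology}; since being a MHS morphism is preserved under $\Q$-duality and composition with MHS isomorphisms, it remains to prove that the $\Q$-dual of $\iota_M$,
\[
\Psi\colon H^i(U,R_{-M}\otimes_R\oL)\longrightarrow\Tors_R H^{i+1}(U,\oL)_1,
\]
is a morphism of MHS.

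\emph{The mixed-cone connecting morphism.} Fix $M'\gg1$ and use the mixed cone $\sC^\bullet_{M,M'}$ of Definition~\ref{def:mixedCone}, the rational part of a mixed Hodge complex of sheaves. By Definition-Proposition~\ref{defprop:aux} there is a canonical isomorphism $\bH^i(U,\sC^\bullet_{M,M'})\cong H^i(U,R_{-M}\otimes_R\oL)$, and by part~\eqref{part:defMHSF} of that statement together with the case $M'=0$ (where $\sC^\bullet_{M,0}=\sK^\bullet_M(1-M)\cong\sK^\bullet_{-M}$) the MHS it carries is precisely the one of Definition~\ref{def:endowedMHS} — the one whose $\Q$-dual occurs above. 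The long exact sequence of Remark~\ref{rem:lesMixedCone}, in these groups, reads
\[
\cdots\to H^i(U,R_{M+M'}\otimes_R\oL)(-M)\to H^i(U,R_{-M}\otimes_R\oL)\xrightarrow{\ \partial\ }H^{i+1}(U,R_{M'}\otimes_R\oL)\xrightarrow{\ s^M\ }H^{i+1}(U,R_{M+M'}\otimes_R\oL)(-M)\to\cdots,
\]
and all its maps, in particular $\partial$, are morphisms of MHS. Comparing with the $R_\infty$-coefficient version (Corollary~\ref{cor:UCTinfty}, the formal case $M'=\infty$), where the analogous connecting map lands onto $\ker\bigl(s^M\colon R_\infty\otimes_R H^{i+1}(U,\oL)\to R_\infty\otimes_R H^{i+1}(U,\oL)\bigr)$, which equals $\Tors_R H^{i+1}(U,\oL)_1$ once $M\gg1$ (Remark~\ref{rem:torsionRinfty}: $s^M$ annihilates the torsion and is injective on the free part), one deduces that for $M,M'\gg1$ the image of $\partial$ is exactly the submodule $\Tors_R H^{i+1}(U,\oL)_1\subseteq H^{i+1}(U,R_{M'}\otimes_R\oL)$ from Proposition~\ref{prop:TorsIntoHomology}. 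As that submodule is a sub-MHS by Theorem~\ref{thm:tors1cohomology}, $\partial$ corestricts to a morphism of MHS $\ov\partial\colon H^i(U,R_{-M}\otimes_R\oL)\to\Tors_R H^{i+1}(U,\oL)_1$.

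\emph{Identifying $\Psi$ with $\ov\partial$ — the main point.} It remains to show $\Psi=\ov\partial$. Both are built from the surjection of local systems $\cL\twoheadrightarrow R_M\otimes_R\cL$ — equivalently, from the inclusion $R_\infty\otimes_R\oL\hookrightarrow s^{-M}R_\infty\otimes_R\oL$ whose cone is $\cF^\bullet_M$ (Definition~\ref{def:quotientsFm}) — together with the perfect pairings of Remarks~\ref{rem:RmDual} and~\ref{rem:RmcLDual} and the connecting effect of a short exact sequence of complexes. The only nontrivial point is that the $\Tors_R\Alex$ side of~\cite{mhsalexander} encodes the passage from $H_i$ to $H^{i+1}$ through the residue map and the injective resolution $R\to K\to K/R$ (Theorem~\ref{thm:UCT} and Proposition~\ref{prop:ResUCT}), whereas $\ov\partial$ encodes it through the connecting morphism of $\sC^\bullet_{M,M'}$, whose underlying complex of sheaves is $\wt\cF^\bullet_{M,M'}$; these two descriptions are matched by Lemma~\ref{lem:commutes}, and it is exactly there that the explicit description of the morphism $\mathrm{UCT}$ in Theorem~\ref{thm:UCT} is needed, the lemma identifying the $\mathrm{UCT}/\Res$ duality of the torsion with the connecting morphism of $\cF^\bullet_M$ under the identification $\coker(R_\infty\hookrightarrow s^{-M}R_\infty)\cong R_{-M}$ and the residue pairing $\pairing_M$ of Remark~\ref{rem:RmDual}. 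Feeding this, along with the chain-level book-keeping of pairings in Theorem~\ref{thm:dualHomology}, into the identifications of the first paragraph gives $\Psi=\ov\partial$; since $\ov\partial$ is a morphism of MHS, so is $\Psi$, hence $\iota_M$, hence~\eqref{eq:mainThm}. The crux — and the main obstacle — is this identification: the compatibility, packaged in Lemma~\ref{lem:commutes}, between the ``residue'' duality of~\cite{mhsalexander} and the ``mixed cone'' duality introduced here.
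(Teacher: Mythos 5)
Your proposal follows essentially the paper's own route: the same reductions (large $m$, the eigenvalue-$1$ summand via Theorem~\ref{thm:eigenspaces}, dualizing over $\Q$ through $\Theta^\vee$ and $\mathrm{UCT}\circ\Res^{-1}$), the same auxiliary mixed-cone complexes of Section~\ref{sec:aux}, and for the crux you invoke Lemma~\ref{lem:commutes} itself rather than proving the matching of the residue/UCT duality with the cone construction, which is exactly where the paper's work lies (together with the compatibilities you use implicitly, which are the content of Lemmas~\ref{lem:pairingMHS} and~\ref{lem:XiMHS}). The only real repackaging is the finish: you realize $\Psi$ as the corestriction of the map $\partial$ in the long exact sequence of Remark~\ref{rem:lesMixedCone}, whereas the paper concludes from diagram~\eqref{eq:bigComposition} using that $p\circ\Theta^\vee\circ\pairing_m$ is a surjective MHS morphism and strictness. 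One caveat on your version: the claim that $\operatorname{im}\partial=\ker\bigl(s^M\bigr)$ equals $\Tors_R H^{i+1}(U,\oL)_1$ inside $H^{i+1}(U,R_{M'}\otimes_R\oL)$ is not fully justified by ``$s^M$ kills the torsion and is injective on the free part'': $H^{i+1}(U,R_{M'}\otimes_R\oL)$ also contains the $(t-1)^{M'}$-torsion of $H^{i+2}(U,\oL)$ as a quotient piece, and one must check (e.g.\ by the snake lemma applied to the two short exact sequences $0\to R_\infty\otimes\oL\xrightarrow{s^{M'}}R_\infty\otimes\oL\to R_{M'}\otimes\oL\to 0$ and its $M+M'$ analogue) that $s^M$ acts on that piece as the natural inclusion, so it contributes nothing to the kernel; this is true, and alternatively the whole image computation can be bypassed by proving $\iota_{M'}\circ\Psi=\partial$ directly and using strictness of the injective MHS morphism $\iota_{M'}$ from Theorem~\ref{thm:tors1cohomology}, which is in effect what the paper does.
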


In order to prove Theorem~\ref{thm:main}, it will be necessary to understand the following morphism.
\begin{definition}[Definition of $\Xi$]\label{def:Xi}
The morphism
  \[
   \Xi\colon\Hom_\Q\left(\frac{\Alex}{(t-1)^m\Alex},\Q\right)
   \to
 \Hom_\Q\left(\TorsH_1, \Q\right).
  \]
  is defined as the $\Q$-dual of the restriction of the map in~\eqref{eq:mainThm} to \(\TorsH_1\) for all $i$.
\end{definition}

Specifically, we will need to show that $\Xi$ is a MHS morphism. We will not do this directly, instead we will relate $\Xi$ to other morphisms that will be easier to understand. We start by defining these auxiliary morphisms.

\begin{definition}[Definition of $\wt\Xi$]\label{def:wtXi}
Let $m\geq 1$ and let $\cF_m^\bullet\coloneqq\left(R_{\infty}\otimes_R\oL\hookrightarrow s^{-m}R_{\infty}\otimes_R\oL \right)$ be a two-term complex in degrees $-1$ and $0$ (as in Definition~\ref{def:quotientsFm}). We define the morphism
$$\wt\Xi\colon \cF_m^\bullet \to R_\infty \otimes_R \oL[1]$$
as \(-\Id_{R_\infty \otimes_R \oL}\) in degree \(-1\) and vanishing in degree \(0\).
\end{definition}

\begin{definition}[Definition of $p$]\label{def:p}
Let $m\geq 0$. We define the epimorphism
$$
p:\Hom_\Q(H_i(U,R_m\otimes_R\cL),\Q)\twoheadrightarrow\Hom_\Q\left(\frac{H_i(U^f,\Q)}{(t-1)^mH_i(U^f,\Q)},\Q\right)
$$
as the $\Q$-dual of the map
\begin{equation}\label{eq:pdual}
\frac{H_i(U,\cL)}{(t-1)^m H_i(U,\cL)} \to H_i(U, R_m\otimes_R \cL)\end{equation}
induced in homology by the projection $\cL\twoheadrightarrow R_m\otimes_R\cL$, under the identification 
 \(H_i(U^f, \Q)\cong H_i(U, \cL)\) from Proposition \ref{prop:UfvscL}. Next, we consider the map induced by the projection \(\cL\twoheadrightarrow R_m\otimes_R \cL\), which, in homology, factors through
$$
H_i(U, \cL)\to
\frac{H_i(U,\cL)}{(t-1)^m H_i(U,\cL)} \to H_i(U, R_m\otimes_R \cL).$$
\end{definition}

Indeed, $p$ is surjective because the morphism in \eqref{eq:pdual} is injective by Proposition~\ref{prop:TorsIntoHomology}.

\begin{definition}[Definition of $\pairing_m$]\label{def:pairingSheaves}
 Let $m\geq 1$ and let $\cF_m^\bullet$ denote the same complex as in Definition~\ref{def:wtXi}.  Let $\pairing_m$ be as in Remark~\ref{rem:RmDual}. With a slight abuse of notation, we define the quasi-isomorphism
 $$
\pairing_m:\cF^\bullet_m\to R_{-m}\otimes \oL
 $$
 as vanishing in degree $-1$ and induced by the map
 $$
    \begin{array}{ccc}
     s^{-m}R_\infty    & \longrightarrow &  R_{-m}\\
       g(s)  &\longmapsto &\langle \ov{g(s)},\cdot\rangle_m
    \end{array}$$
    in degree $0$, where $\ov{g(s)}$ denotes the image of $g(s)$ through the projection $s^{-m}R_\infty\to \frac{s^{-m}R_\infty}{R_\infty}$.
\end{definition}

In this section we will prove Theorem~\ref{thm:main} in a series of lemmas, the first of which specifies the precise relation between $\Xi$ and the other morphisms that we have just defined. Note that the statement of the following result is not related to Hodge theory.

  \begin{lemma}\label{lem:commutes}
    Let $\cF_m^\bullet$ be as in Definitions~\ref{def:quotientsFm} and~\ref{def:wtXi}. Consider the maps $\Theta^\vee$, $\Xi$, $\wt\Xi$, $p$, $\pairing_m$ and \(\mathrm{UCT}\circ \Res^{-1} \) from Definitions~\ref{def:endowedMHSHomology}, \ref{def:Xi}, \ref{def:wtXi}, \ref{def:p} and \ref{def:pairingSheaves} and Theorem \ref{thm:tors1homology}, respectively. Then, the following diagram commutes:
    \begin{equation}\label{eq:bigComposition}
      \begin{tikzcd}[column sep = 1.2em, row sep = 1em]
H^i(U, R_{-m}\otimes_R \oL)
\arrow[r, twoheadrightarrow, "p\circ \Theta^\vee"]
&
\displaystyle 
\Hom_\Q\left(\frac{\Alex}{(t-1)^m\Alex},\Q\right)
\arrow[r, "\Xi"]
&
\Hom_\Q(\TorsH_1, \Q)
\arrow[d, "\mathrm{UCT}\circ \Res^{-1}", "\cong"']
\\
    \bH^i(U, \cF_m^\bullet)
    \arrow[u, "\langle \cdot{,} \cdot \rangle_m", "\cong"']
\arrow[r, "\wt\Xi"]
&
R_\infty \otimes_R H^{i+1}(U, \oL)
&
\Tors_R H^{i+1}(U, \oL)_1.
\arrow[l, hookrightarrow]
  \end{tikzcd}
  \end{equation}
  \end{lemma}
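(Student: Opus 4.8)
Since the assertion concerns only $\Q$-linear maps—no mixed Hodge structure is involved—the plan is to unwind all of $\Theta^\vee$, $p$, $\Xi$, $\Res$, $\mathrm{UCT}$ and $\wt\Xi$ at the level of the (co)chain complexes of Proposition~\ref{prop:chainHomology} and then chase~\eqref{eq:bigComposition}. The maps $\Res$ and $\mathrm{UCT}$ appearing there are the eigenvalue-$1$ restrictions from Proposition~\ref{prop:ResUCT}, so Theorem~\ref{thm:tors1homology} together with Corollary~\ref{cor:UCTinfty} and Remark~\ref{rem:torsionRinfty} lets us work over $R_\infty$ throughout: we replace $\Res$ and $\mathrm{UCT}$ by their $R_\infty$-linear analogues built from the injective resolution $R_\infty\to K_\infty\to K_\infty/R_\infty$, identify the lower-right term $\Tors_R H^{i+1}(U,\oL)_1$ with $\Tors_{R_\infty}H^{i+1}(U,R_\infty\otimes_R\oL)\subseteq R_\infty\otimes_R H^{i+1}(U,\oL)$, and regard $s^{-m}R_\infty$ and $R_{-m}=s^{-m}R_\infty/R_\infty$ as submodules of $K_\infty$ and of $K_\infty/R_\infty$, respectively.

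The next step is to identify $\wt\Xi$. By Definition-Proposition~\ref{def:mappingCone}, $\cF^\bullet_m=\Cone(R_\infty\otimes_R\oL\hookrightarrow s^{-m}R_\infty\otimes_R\oL)$, and by Definition~\ref{def:wtXi} the map $\wt\Xi$ is minus the tautological projection $\cF^\bullet_m\to(R_\infty\otimes_R\oL)[1]$; hence on hypercohomology $\wt\Xi$ is, up to that sign, the connecting morphism of the mapping-cone short exact sequence. The quasi-isomorphism $\pairing_m\colon\cF^\bullet_m\to R_{-m}\otimes_R\oL$ of Definition~\ref{def:pairingSheaves}—whose degree-$0$ component is the quotient $s^{-m}R_\infty\twoheadrightarrow s^{-m}R_\infty/R_\infty$ composed with the residue identification of Remark~\ref{rem:RmDual}—identifies that mapping-cone triangle with the triangle of the short exact sequence of sheaves $0\to R_\infty\otimes_R\oL\to s^{-m}R_\infty\otimes_R\oL\to R_{-m}\otimes_R\oL\to 0$. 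Consequently, after identifying $\bH^i(U,\cF^\bullet_m)$ with $H^i(U,R_{-m}\otimes_R\oL)$ via the left vertical isomorphism of~\eqref{eq:bigComposition}, the map $\wt\Xi$ becomes, up to sign, the connecting homomorphism $\delta\colon H^i(U,R_{-m}\otimes_R\oL)\to H^{i+1}(U,R_\infty\otimes_R\oL)$ of that sequence, and the Lemma becomes equivalent to the identity $\iota\circ\mathrm{UCT}\circ\Res^{-1}\circ\Xi\circ p\circ\Theta^\vee = \delta$, where $\iota$ is the inclusion of~\eqref{eq:bigComposition}.

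To prove that identity, map the short exact sequence of sheaves above into the one coming from the injective resolution, $0\to R_\infty\otimes_R\oL\to K_\infty\otimes_R\oL\to(K_\infty/R_\infty)\otimes_R\oL\to 0$, by the identity on the left term and by the inclusions $s^{-m}R_\infty\hookrightarrow K_\infty$ and $R_{-m}\hookrightarrow K_\infty/R_\infty$ on the middle and right terms. Naturality of the connecting homomorphism rewrites $\delta$ through the connecting homomorphism of the injective-resolution sequence, which—by exactly the computation in the proof of Theorem~\ref{thm:UCT} (the long exact sequence~\eqref{eq:UCTles}), transported to $R_\infty$ via Corollary~\ref{cor:UCTinfty}—equals $\iota\circ\mathrm{UCT}$ precomposed with the canonical surjection $H^i(U,(K_\infty/R_\infty)\otimes_R\oL)\cong\Hom_{R_\infty}(H_i(U,R_\infty\otimes_R\cL),K_\infty/R_\infty)\twoheadrightarrow\Ext^1_{R_\infty}(\Tors_{R_\infty}H_i(U,R_\infty\otimes_R\cL),R_\infty)$. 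Since $\mathrm{UCT}$ is injective, it remains to check that this surjection, precomposed with the inclusion $H^i(U,R_{-m}\otimes_R\oL)\hookrightarrow H^i(U,(K_\infty/R_\infty)\otimes_R\oL)$, agrees with $\Res^{-1}\circ\Xi\circ p\circ\Theta^\vee$. Unwinding $\Theta$ (Theorem~\ref{thm:dualHomology}\eqref{part:perfectPairing} and Remark~\ref{rem:RmcLDual}), the maps $p$ and $\Xi$ (Definitions~\ref{def:p} and~\ref{def:Xi} with Proposition~\ref{prop:TorsIntoHomology}), and $\Res$ (Proposition~\ref{prop:ResUCT}), this reduces to a concrete statement: for a cohomology class $\omega$ pulled back from $R_{-m}\otimes_R\oL$ and a torsion class $v$, the $K_\infty/R_\infty$-valued cap product $\langle\omega,v\rangle$ lies in $R_{-m}$ and its residue equals the $\Q$-valued pairing of $\omega$ with the image $\bar v$ of $v$ in $H_i(U,R_m\otimes_R\cL)$ under~\eqref{eq:pdual}. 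This in turn is precisely the compatibility of the residue pairing of Remark~\ref{rem:RmDual} with the residue map $\res\colon K_\infty/R_\infty\to\Q$, i.e.\ Remark~\ref{rem:comparisonResidues}, combined with the chain-level description of induced pairings in Theorem~\ref{thm:dualHomology}\eqref{part:naivePairingInHomology}.

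The main obstacle is organizational rather than conceptual: one must keep careful track of the degree shifts (the cone shift, the $[1]$ in the Universal Coefficient Theorem, the placement of $\cF^\bullet_m$ in degrees $-1$ and $0$) and of the signs—matching them is exactly the purpose of the sign chosen in Definition~\ref{def:wtXi}—while consistently unwinding the three ``transpose'' maps $\Theta^\vee$, $p$, $\Xi$, each of which is a $\Q$-dual of a chain-level map and tacitly uses finite-dimensionality for bidual identifications. The only genuine inputs are the naturality of connecting homomorphisms and the fact that the residue pairing computes the Universal Coefficient duality over $R_\infty$; everything else is bookkeeping at the chain level.
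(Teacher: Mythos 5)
Your proposal is correct, and its skeleton coincides with the paper's own proof: both unwind $\Theta^\vee$, $p$, $\Xi$ into the chain-level $\Q$-valued pairing, lift it through $\res_*$ to the $K_\infty/R_\infty$-valued pairing using Remark~\ref{rem:comparisonResidues} (this is the paper's diagram~\eqref{eq:resExpanded}), pass to $R_\infty$ and the resolution $R_\infty\to K_\infty\to K_\infty/R_\infty$ via Corollary~\ref{cor:UCTinfty}, and finally compare $\wt\Xi$ with the map induced by $K_\infty/R_\infty\to R_\infty[1]$. The one genuine divergence is in that final comparison: you identify $\wt\Xi$, through $\pairing_m$, with the connecting homomorphism of $0\to R_\infty\otimes_R\oL\to s^{-m}R_\infty\otimes_R\oL\to R_{-m}\otimes_R\oL\to 0$ and then invoke naturality of connecting maps along the inclusion into the injective-resolution sequence, whereas the paper writes down an explicit chain homotopy (the inclusion $s^{-m}R_\infty\otimes_R\oL\hookrightarrow K_\infty\otimes_R\oL$) between the two maps of complexes $\cF_m^\bullet\to I_\infty^\bullet[1]\otimes_R\oL$. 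These are the same computation in different packaging, and your version is arguably cleaner; the trade-off is that the two compatibilities you treat as standard (cone projection versus connecting morphism, and the sheaf-level connecting map versus the chain-level map induced by $K_\infty/R_\infty\to R_\infty[1]$ under Theorem~\ref{thm:dualHomology}\eqref{part:perfectPairing}) each hold only up to a convention-dependent sign, and checking that the net sign is $+1$ --- i.e.\ that the $-\Id$ in Definition~\ref{def:wtXi} is the correct normalization --- is exactly what the paper's explicit homotopy accomplishes. In your write-up that sign verification is asserted rather than performed, so to make the argument complete you would still need to fix conventions and carry it out (it is routine bookkeeping, as you note), but there is no conceptual gap.
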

  \begin{proof}
    Let us compute the composition of the top row of \eqref{eq:bigComposition}: it is given by composing \(\Theta^\vee\) with \(\Xi\circ p\). The composition \(\Xi\circ p\) is the dual of the map induced in homology by the projection \(\cL\to R_m\otimes_R \cL\), restricted to $\Tors_R H_i(U^f,\Q)_1$. Applying Theorem \ref{thm:dualHomology}~\eqref{part:functorialPairing}, the top row of \eqref{eq:bigComposition} is given 
by applying Theorem \ref{thm:dualHomology}, Parts~\eqref{part:functorialPairing} and~\eqref{part:naivePairingInHomology} to the pairing
     \[
      \begin{tikzcd}[row sep = 0em, column sep =0]
        \left(R_{-m}\otimes_R \oL\right)
        &
        \times
      &
  \cL
      \arrow[rr]
      &&
      \ul\Q
      \\
      \quad\quad\quad(\phi \otimes a
      &
      {,}
      &
      b)
      \arrow[rr, mapsto]
      &\phantom{a}&
      \phi\left(\ov{a(b)}\right).
    \end{tikzcd} 
    \]
    Here, \(\ov {a(b)}\) denotes the image of \(a(b)\in R\) inside \(R_m\) (see Definition \ref{def:exp}).
    
Next, let us consider the map \(\langle \cdot, \cdot\rangle_m\). If \(g\in s^{-m}R_{\infty}\), its image in \(R_{-m}\) is the element \(\langle g, \cdot \rangle_m\in R_{-m}\), by Remark~\ref{rem:RmDualMultiplication}. 
    Applying Theorem \ref{thm:dualHomology}~\eqref{part:functorialPairing} again, the composition
% \[
% \bH^i(U, \cF^\bullet_m) \xrightarrow{\Xi\circ p\circ \Theta \circ\langle\cdot,\cdot\rangle_m} \Hom_\Q\left(\frac{H_i(U^f, \Q)}{(t-1)^mH_i(U^f, \Q)}, \Q\right)
% \]
\(\Xi\circ p\circ \Theta^\vee \circ\langle\cdot,\cdot\rangle_m\) is given by the pairing that vanishes on \(\cF^{-1}\), and equals the following on \(\cF^0\):
\begin{equation}\label{eq:pairingFirstHalf}
      \begin{tikzcd}[row sep = 0.2em, column sep = 0.0em]
        \left(s^{-m}R_{\infty}\otimes_R \oL\right)
        &
        \times
      &
       \cL
      \arrow[rr]
      &&
      \ul\Q
      \\
      \quad\quad\quad(g \otimes a
      &
      {,}
      &
       b)
      \arrow[rr, mapsto]
      &\phantom{a}&
      \langle g, \ov{a(b)} \rangle_m = \res_0 (a(b)\cdot g).
    \end{tikzcd} 
\end{equation}
Now that we have understood the composition \(\Xi\circ p\circ \Theta ^\vee\circ\langle\cdot,\cdot\rangle_m\) as a map induced by Theorem~\ref{thm:dualHomology}~\eqref{part:functorialPairing} from a pairing, let us focus on the rest of the maps in the diagram \eqref{eq:bigComposition}. We will start by trying to understand the map \(\Res\circ\mathrm{UCT}^{-1}\), more specifically, the map $\Res$, as defined in Theorem~\ref{thm:tors1homology}. Recall that $\Res$ is defined as the composition (from left to right)
    \begin{equation}\label{eq:Reslemma}
      \Hom_\Q(A, \Q)
      \xleftarrow[\cong]{\res_*}
       \Hom_R(A, K/R) 
      \xrightarrow[\cong]{K/R \to R[1]}
      \Ext^1_R(A, R)
    \end{equation}
    applied to $A=\Tors_R H_i(U^f,\Q)_1$.

    With this description, diagram~\eqref{eq:bigComposition} becomes:
        \begin{equation}\label{eq:bigCompositionres}
      \begin{tikzcd}[column sep = 1.3em, row sep = 1em]
\Hom_\Q(\Tors_R H_i(U^f,\Q)_1,\Q)
&\Hom_R(\Tors_R H_i(U^f,\Q)_1,K/R)\arrow[l,"\res_*"', "\cong"]\arrow[r,"K/R\to R{[1]}"',outer sep=5pt]
\arrow[r, phantom, "\scriptstyle \cong", shift left = 0.5em]
&\Ext_R^1(\Tors_R H_i(U^f,\Q),R)_1\arrow[d,"\mathrm{UCT}"]\\
\mathbb{H}^i(U,\cF^\bullet_m)\arrow[u, "\Xi\circ p\circ\Theta^\vee\circ\pairing_m"]\arrow[r,"\wt\Xi"]
& R_\infty\otimes_R H^{i+1}(U,\oL)
&\Tors_R H^{i+1}(U,\oL)_1\arrow[l,hook]
  \end{tikzcd}
  \end{equation}
    
    \begin{comment}
    The fact that both arrows are isomorphisms for finite dimensional modules is proved in \cite[Lemma 2.15]{mhsalexander}. \end{comment}
    
    By Proposition~\ref{prop:TorsIntoHomology}, the natural projection $\cL\twoheadrightarrow R_m\otimes_R\cL$ induces monomorphisms $$\Tors_R H_i(U, \cL)_1 \hookrightarrow H_i(U, R_{m}\otimes_R \cL)$$ for all \(m \gg 1\).  Furthermore, note that all three functors \(\Ext^1_R(\cdot, R)\), \(\Hom_R(\cdot, K/R)\) and \(\Hom_\Q(\cdot, \Q)\) are right-exact, so the injection will turn into a dual surjection. The map~\eqref{eq:Reslemma} above is induced, via this surjection for \(m\gg 1\), by the analogous maps applied to \(A=H_i(U, R_m\otimes \cL)\):
\begin{equation*}%\label{eq:expandTorsionSide}
      \Hom_\Q(H_i(U, R_{m}\otimes_R \cL), \Q)
\xleftarrow{\cong}
      \Hom_R(H_i(U, R_{m}\otimes_R \cL), K/R)
      \xrightarrow{\cong}
      \Ext^1_R(H_i(U, R_{m}\otimes_R \cL), R).
    \end{equation*}
    Let us start by focusing on the left hand map. Applying Theorem~\ref{thm:dualHomology}~\eqref{part:perfectPairing}, we obtain the following commutative diagram (note that \(K/R\) is injective):
    \[
      \begin{tikzcd}[row sep = 1.2em]
        \Hom_\Q(H_i(U, R_{m}\otimes_R \cL), \Q)
&
\arrow[l, "\cong", "\res_*"']
\Hom_R(H_i(U, R_{m}\otimes_R \cL), K/R) 
\\
H^{i}(U, \Homm_\Q (R_{m}\otimes_R \cL, \Q))
\arrow[u, "\text{Theorem \ref{thm:dualHomology}}\eqref{part:perfectPairing}", "\cong"', outer sep = 0.3em]
&
H^{i}(U, \Homm_R (R_{m}\otimes_R \cL, K/R)).
\arrow[l, "\cong", "\res_*"']
\arrow[u, "\text{Theorem \ref{thm:dualHomology}}\eqref{part:perfectPairing}", "\cong"', outer sep = 0.3em]
\end{tikzcd}
    \]
    The bottom row map is induced in cohomology by the sheaf morphism \(\res_*\) (as a map of constant sheaves). % Note that the inclusion of \(R\) into \(R_\infty\) induces \(\frac{s^{-m}R_\infty}{R_\infty} \cong \frac{(t-1)^{-m}R}{R}\), and 
 We include this map into the following diagram, and claim that it commutes,
    \begin{equation}\label{eq:resExpanded}
      \begin{tikzcd}[column sep = 5em, row sep = 1.2em]
      \cF^\bullet_m
      \arrow[r]
      \arrow[d, equals]
      &
      \frac{s^{-m}R_\infty}{R_\infty}\otimes_R \oL
      \arrow[d, "\pairing_m\otimes \Id_{\oL}"]
      \arrow[r, "\mathrm{mult}"]
      &
      \Homm_R(R_{m}\otimes_R \cL, K/R)
      \arrow[r]
      \arrow[d, "\res_*", "\cong"']
      &
      \Homm_R(\cL, K/R)
      \arrow[d, "\res_*", "\cong"']
      \\
      \cF^\bullet_m
      \arrow[r, "\pairing_m"]
      &
      R_{-m} \otimes_R \oL
      \arrow[r]
      &
      \Homm_\Q(R_{m}\otimes_R\cL, \Q)
      \arrow[r]
      &
      \Homm_\Q(\cL, \Q).
    \end{tikzcd}
    \end{equation}
    In this diagram:
    \begin{itemize}
        \item The top left horizontal arrow is given by the projection $\cF^0_m=s^{-m}R_\infty\otimes_R\oL\twoheadrightarrow \frac{s^{-m}R_\infty}{R_\infty}\otimes_R\oL$.
        \item \(\pairing_m\) in the arrow labeled by \(\pairing_m\otimes\Id_{\cL}\) denotes the isomorphism $\frac{s^{-m}R_\infty}{R_\infty}\to R_{-m}$ from Remark~\ref{rem:RmDual}.
        \item The second horizontal arrow from the left in the bottom row is the one induced by the pairing in \eqref{eq:pairingTruncated}.
        \item The arrow labeled as $\mathrm{mult}$ is induced from the pairing $$
        \begin{array}{cccc}
\mathrm{mult}\colon&\left(\frac{s^{-m}R_\infty}{R_\infty} \otimes_R \oL \right)\times \left(R_m\otimes_R \cL \right)&\longrightarrow &K/R\\
&(g(s)\otimes a, \alpha(s)\otimes b) &\longmapsto& g(s)\cdot \left( \alpha(s)\cdot a(b)\right).
\end{array}$$
Let us break down this definition: $\alpha(s)\cdot a(b)$ is an element in $R_m$ (recall that $R_m$ has an $R$-module structure by Definition~\ref{def:exp}). Hence, $g(s)\cdot \left( \alpha(s)\cdot a(b)\right)\in \frac{s^{-m}R_\infty}{R_\infty}$. Identifying $\frac{s^{-m}R_\infty}{R_\infty}$ with $\frac{(t-1)^{-m}R}{R}$ through the isomorphism induced by the monomorphism $R\hookrightarrow R_\infty$ in Definition~\ref{def:exp}, $g(s)\cdot \left(\alpha(s)\cdot a(b)\right)\in \frac{(t-1)^{-m}R}{R}\subset K/R$.
%defined as the multiplication pairing \((t-1)^{-m}R\times R\to K/R\), descended to the quotients \((t-1)^{-m}R\twoheadrightarrow \frac{(t-1)^{-m}R}{R}\cong \frac{s^{-m}R_\infty}{R_\infty}\) and \(R\twoheadrightarrow R_m\), and tensored with the duality pairing \(\oL \times \cL\to R\).
    \end{itemize}
    The first square commutes by definition. It is straightforward to check that the second square commutes using Remark~\ref{rem:comparisonResidues}. The commutativity of the third square is clear, since both vertical arrows are induced by the same map \(\res\colon K/R\to \Q\). Note that the composition of the bottom row of~\eqref{eq:resExpanded} is the pairing~\eqref{eq:pairingFirstHalf}. The composition of the top row of~\eqref{eq:resExpanded} is induced by the $R$-bilinear pairing
    $$
    \begin{array}{ccc}
\left(s^{-m}R_{\infty}\otimes_R \oL\right)\times \cL&\longrightarrow & K/R\\
 \quad\quad\quad (g(s)\otimes a\,,\, b) &\longmapsto &a(b)\cdot g(s),
    \end{array}
    $$
where $a(b)\cdot g\in s^{-m}R_\infty$ is seen in the quotient $\frac{s^{-m}R_\infty}{R_\infty}$, which, as in the definition of $\mathrm{mult}$, is identified with $\frac{(t-1)^{-m}R}{R}\subset K/R$.

Note that the bottom row of \eqref{eq:resExpanded} (which is induced by the same pairing as $\Xi\circ p\circ\Theta^\vee\circ\pairing_m$) composed with $\res_*^{-1}$ equals the top row of \eqref{eq:resExpanded}, and compare that to \eqref{eq:bigCompositionres}. To prove the lemma, it remains to show that the following diagram commutes, where the horizontal arrow at the top is given by the pairing corresponding to the top row of~\eqref{eq:resExpanded}:
\[
\begin{tikzcd}[column sep = 4em]    &
    \bH^i(U,\cF^\bullet_m)
    \arrow[dl, "\wt\Xi"']
    \arrow[r, "(g\otimes a{,}b)\mapsto a(b) \cdot g"]
    % \Hom_R(H_i(U, R_m\otimes_R \cL), K/R)
    % \arrow[d, "K/R \to R{[1]}"]
    &
    \Hom_R(\Tors_R H_i(U,\cL)_1, K/R)
    \arrow[d, "K/R \to R{[1]}"]
    % \arrow[dl, "*"']
\\
    R_\infty \otimes_R H^{i+1}(U, \oL)
    & 
    \Tors_R
    H^{i+1}(U, \oL)_1
    \arrow[l, hookrightarrow]
    % \arrow[r, "?"]
    \arrow[from=r,"\mathrm{UCT}"', "\cong"]
    % \Ext^1_R(H_i(U, R_m\otimes_R \cL), R)
    % \arrow[r, twoheadrightarrow]
    &
    \Ext^1_R(\Tors_R H_i(U, \cL)_1, R)
    .
\end{tikzcd}
\]
Note that all objects here are \(R_\infty\) modules (since the ones on the right side of the diagram are torsion and supported at \(t=1\)). Therefore, the diagram does not change it we tensor by \(R_\infty\). Let us denote by \(K_\infty\) the field of fractions of \(R_\infty\). We obtain the following diagram:\[
\begin{tikzcd}[column sep = 4em]    \bH^i(U,\cF^\bullet_m)
    \arrow[d, "\wt\Xi"']
    \arrow[r, "(g\otimes a{,}b)\mapsto a(b) \cdot g"]
    % \Hom_{R_\infty}(H_i(U, {R_\infty}_m\otimes_{R_\infty} R_\infty \otimes_R \cL), {K_\infty}/{R_\infty})
    % \arrow[d, "{K_\infty}/{R_\infty} \to {R_\infty}{[1]}"]
    &
    \Hom_{R_\infty}(\Tors_{R_\infty} H_i(U,R_\infty \otimes_R \cL), {K_\infty}/{R_\infty})
    \arrow[d, "{K_\infty}/{R_\infty} \to {R_\infty}{[1]}"]
    \arrow[dl, "*"']
\\
 H^{i+1}(U, R_\infty \otimes_R \oL)
    % \arrow[r, "?"]
    \arrow[from=r,"\mathrm{UCT}"', "\cong"]
    % \Ext^1_{R_\infty}(H_i(U, {R_\infty}_m\otimes_{R_\infty} R_\infty \otimes_R \cL), {R_\infty})
    % \arrow[r, twoheadrightarrow]
    &
    \Ext^1_{R_\infty}(\Tors_{R_\infty} H_i(U, R_\infty \otimes_R \cL), {R_\infty})
    .
\end{tikzcd}
\]
Here, the arrow ``$*$'' is defined as the composition of $K_\infty/R_\infty\to R_\infty[1]$ and $\mathrm{UCT}$.
Now, if we apply Corollary~\ref{cor:UCTinfty}, we have a description of the arrow \(*\), namely, it is the map \(\Hom_{R_\infty}(\Tors_{R_\infty}H_i(U,R_\infty \otimes_R L), K_\infty /{R_\infty})\to H^{i+1}(U, \Homm_{R_\infty}(R_\infty \otimes_R L, {R_\infty}))\) in the statement of said theorem applied to $L=\cL$. Using the maps defined in the statement of Theorem \ref{thm:UCT}, we see that the previous diagram commutes if and only if the following diagram commutes:
\[
\begin{tikzcd}[column sep = 6em, row sep = 1.5em]
    \bH^i(U,\cF^\bullet_m)
    \arrow[d, "\wt\Xi"']
    \arrow[r, "(g\otimes a{,}b)\mapsto a(b) \cdot g"]
    &
    \Hom_{R_\infty}(\Tors_{R_\infty} H_i(U, R_\infty \otimes_R \cL),K_\infty/R_\infty)
    \arrow[d, "K_\infty/R_\infty \to R{[1]}"]
    \\
    H^{i+1}(U, R_\infty \otimes_R \oL)
    % \arrow[r, "?"]
    &
    H^i(\Hom_{R_\infty}^\bullet(C_\bullet(U,  R_\infty \otimes_R \cL), R_\infty[1])) .
    \arrow[from=l, "\text{Theorem \ref{thm:dualHomology}\eqref{part:perfectPairing}}", "\cong"']
\end{tikzcd}
\]
Note that the top horizontal arrow factors through the whole module, not just the torsion quotient. Let us denote \(I_\infty^\bullet = R_\infty \otimes_R I^\bullet\), and let us also compose with the isomorphism induced by \(R_\infty \cong I_\infty^\bullet\). We obtain the following diagram (now transposed).
\begin{equation}\label{eq:quisoBroken}
\begin{tikzcd}[column sep = 4em, row sep = 1.5em]
    \bH^i(U,\cF^\bullet_m)
    \arrow[r, "\wt\Xi"']
    \arrow[d, "(g\otimes a{,}b)\mapsto a(b) \cdot g"]
    &
        H^{i+1}(U, R_\infty \otimes_R \oL)
         \arrow[d, "\text{Theorem \ref{thm:dualHomology}\eqref{part:perfectPairing}}"]
\\
    \Hom_{R_\infty}(H_i(U,R_\infty \otimes_R \cL), K_\infty/{R_\infty})
    \arrow[d, "K_\infty/{R_\infty} = I_\infty^1"]
    &
    H^i(\Hom_{R_\infty}^\bullet(C_\bullet(U, R_\infty \otimes_R \cL), {R_\infty}[1])) 
    \arrow[d, "\cong"']
    \\
H^i(\Hom_{R_\infty}^\bullet(C_\bullet(U, R_\infty \otimes_R \cL), I_\infty^\bullet[1])) 
\arrow[r, equals]
&
    H^i(\Hom^\bullet_{R_\infty}(C^\bullet(U, R_\infty \otimes_R \cL), I_\infty^\bullet[1]))
    .
\end{tikzcd}
\end{equation}
Applying Theorem \ref{thm:dualHomology}~\eqref{part:perfectPairing} (i.e. tensor-hom adjunction) we can replace the complexes in the right hand column that have the form \(\Hom_{R_\infty}^\bullet(C_\bullet(U, R_\infty \otimes_R \cL), M)\), by
\begin{align*}
 \MoveEqLeft[3] \Hom^\bullet_{{R_\infty}[\pi_1]}(C_\bullet(\wt U), \Homm_{R_\infty}(R_\infty \otimes_R \cL_x, M)) 
\\
&\cong
\Hom^\bullet_{{R_\infty}[\pi_1]}(C_\bullet(\wt U),  M \otimes_{R_\infty } (R_\infty \otimes_R \oL))  
&\text{(\(R_\infty \otimes_R \cL_x\) is free)}
\\
&= C^\bullet( U,M \otimes_{R_\infty } (R_\infty \otimes_R \oL)  )
&\text{(Proposition \ref{prop:chainHomology})}
\\
&\cong  C^\bullet( U,M \otimes_R \oL)  .
\end{align*}
We apply this to the right hand column of \eqref{eq:quisoBroken}, with \(M = R_\infty[1]\) and \(M = I_\infty^\bullet[1]\). Note that we can apply the isomorphism to \(I_\infty^\bullet\) even though it is a complex, since we just apply Theorem~\ref{thm:dualHomology}~\eqref{part:perfectPairing} for \(I^0_\infty\) and \(I^1_\infty\). We obtain the following diagram.

\[
\begin{tikzcd}[column sep = 4em, row sep = 1.5em]    \bH^i(U,\cF^\bullet_m)
    \arrow[r, "\wt\Xi"']
    \arrow[d, "(g\otimes a{,}b)\mapsto a(b) \cdot g"]
    &
        H^{i+1}(U, R_\infty \otimes_R \oL)
         \arrow[d, "\text{Theorem \ref{thm:dualHomology}\eqref{part:perfectPairing}}"]
\\
    \Hom_{R_\infty}(H_i(U,R_\infty \otimes_R \cL), K_\infty/{R_\infty})
    \arrow[d, "K_\infty/{R_\infty} = I_\infty^1"]
    &
    H^i(C^\bullet(U, R_\infty[1] \otimes_R \oL) )
    \arrow[d, "\cong"']
    \\
H^i(\Hom_{R_\infty}^\bullet(C_\bullet(U, R_\infty \otimes_R \cL), I_\infty^\bullet[1]))
\arrow[r, "\cong"]
&
    H^i(C^\bullet(U, I_\infty^\bullet[1] \otimes_R \oL) )
    .
\end{tikzcd}
\]

Next, we apply Proposition~\ref{prop:chainHomology} to conclude that the cohomology of the complexes on the bottom right is the sheaf cohomology. Note that we can apply it to \(I^\bullet_\infty\otimes_R \oL\), even though it is a complex. Indeed, the quasi-isomorphism \(I^\bullet_\infty \cong R_\infty\) induces a quasi-isomorphism \(C^\bullet(U, I^\bullet_\infty\otimes_R\oL) \cong  
C^\bullet(U, R_\infty\otimes_R\oL)\), so we have:
\[
H^i(C^\bullet(U, I^\bullet_\infty\otimes_R\oL)) \xleftarrow[\cong]{K_\infty \hookleftarrow R_\infty} 
H^i(C^\bullet(U, R_\infty\otimes_R\oL)) \cong 
H^i(U, M\otimes_R\oL) \xrightarrow[\cong]{R_\infty\hookrightarrow K_\infty}
\bH^i(U, I^\bullet_\infty \otimes_R\oL).
\]
We obtain the following diagram:
\begin{equation}\label{eq:peach}
\begin{tikzcd}[column sep = 6em, row sep = 1.5em]    \bH^i(U,\cF^\bullet_m)
    \arrow[r, "\wt\Xi"']
    \arrow[d, "(g\otimes a{,}b)\mapsto a(b) \cdot g"]
    &
        H^{i+1}(U, R_\infty \otimes_R \oL)
        \arrow[dd, "\cong"', "R_\infty \to I_\infty^\bullet"]
\\
    \Hom_{R_\infty}(H_i(U,R_\infty \otimes_R \cL), K_\infty/{R_\infty})
    \arrow[d, "K_\infty/{R_\infty} = I_\infty^1"]
        \\
H^i(\Hom_{R_\infty}^\bullet(C_\bullet(U, R_\infty \otimes_R \cL), I_\infty^\bullet[1]))
\arrow[r, "\text{Theorem~\ref{thm:dualHomology}~\eqref{part:perfectPairing}}", "\sim"']
&
    \bH^i(U, I^\bullet_\infty[1] \otimes_R \oL) 
    .
\end{tikzcd}
\end{equation}
We can explicitly write both compositions in this diagram, at the level of chain complexes. The right hand path is simply induced by the map of sheaves that vanishes in degree \(0\) and in degree \(-1\) it is the map \(R_\infty \otimes_R \oL \to K_\infty \otimes_R \oL\) induced by \(-\Id_{R_\infty}\).

The left hand path factors through the map \(\cF^\bullet_m\to \cF^0_m\) induced by the identity of \(\cF^0\). Let us write the corresponding map of (co)chain complexes on any degree \(i\)
\[
\begin{tikzcd}[column sep = 1em, row sep = 1em]
  &
\Hom_{R_\infty}^i(C_\bullet(U,R_\infty\otimes_R \cL), I_\infty^\bullet[1]) 
\arrow[from = d, rotate = 90, "\subset"']
\arrow[r]
  &
  C^\bullet(U, I^\bullet_\infty[1]\otimes_R \oL)
\arrow[from = d, rotate = 90, "\subset"']
  \\
    C^i(U,\cF^0_m)
    \arrow[r]
    &
    % \Hom_{R_\infty}(C_\bullet(U,R_\infty \otimes_R \cL), K_\infty /{R_\infty})
    % \arrow[r]
    % &
\Hom_{R_\infty}(C_i(U,R_\infty\otimes_R \cL), I_\infty^0[1]) 
\arrow[r]
&
    C^i(U, I_\infty^0[1]\otimes_{R}\oL )
    \\
    (\gamma \mapsto g\otimes a)
    \arrow[r, mapsto] & 
    (\gamma \otimes (h\otimes b) \mapsto a(b)\cdot h \cdot g)
    \arrow[r, mapsto] & 
    % (\gamma \otimes f\otimes b) \mapsto a(b)\cdot f \cdot g)
    % \arrow[r, mapsto] & 
    (\gamma \mapsto g\otimes a).
\end{tikzcd}
\]
It is the map in cohomology induced by the map of complexes of sheaves \(\cF^\bullet_m\to I^\bullet_\infty [1]\otimes_R \oL\) that vanishes in degree \(-1\) and is induced by the map \(s^{-m}R_{\infty}\to K_\infty / R_\infty\) in degree \(0\).

Finally, note that both paths in diagram~\eqref{eq:peach} we have computed are homotopic, so they induce the same map in cohomology. The homotopy is the inclusion of \(s^{-m}R_\infty \otimes_R \oL \to K_\infty \otimes_R \oL\):
\[
\begin{tikzcd}[column sep = 7em]
    R_\infty \otimes_R \oL
    \arrow[r, hookrightarrow]
    \arrow[d, shift left = 0.7ex, "-\Id_{K_\infty}"]
    \arrow[d, shift right = 0.7ex, "0"']
    &
    s^{-m}R_\infty \otimes_R \oL
    \arrow[d, shift left = 0.7ex, "0"]
    \arrow[d, shift right = 0.7ex, "\Id_{K_\infty}"']
    \arrow[dl, "\Id_{K_\infty}"']
    \\
    K_\infty \otimes_R \oL
    \arrow[r, twoheadrightarrow]
    &
    K_\infty/R_\infty \otimes_R \oL.
\end{tikzcd}
\]

  \end{proof}

Our goal now is to show that all the morphisms from diagram~\eqref{eq:bigComposition} in Lemma~\ref{lem:commutes} except for $\Xi$ are MHS morphisms. We do this in a series of lemmas.

\begin{lemma}\label{lem:quotientSideMHS}
    Let $m\geq 1$. Let $\Theta^\vee$ be as in Definition~\ref{def:endowedMHSHomology} and let $p$ be as in Definition~\ref{def:p}. The map
   $$p\circ\Theta^\vee\colon H^i(U, R_{-m}\otimes_R \oL) \to \Hom_\Q\left(\frac{H_i(U^f, \Q)}{(t-1)^m H_i(U^f, \Q)}, \Q\right)$$
    is a MHS morphism for all $i$, where the MHS on \(H^i(U, R_{-m}\otimes_R \oL)\) is given by Definition \ref{def:endowedMHS}, and the (dual) MHS on the target is the one given by Definition-Proposition~\ref{defprop:MHSquotientsN}.
\end{lemma}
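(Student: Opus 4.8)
The plan is to factor \(p\circ\Theta^\vee\) and reduce the statement to the fact that a single inclusion of homology groups is a morphism of MHS; no genuine Hodge-theoretic input beyond the constructions recalled above is needed. Concretely, \(p\circ\Theta^\vee\) is the composite
\[
H^i(U, R_{-m}\otimes_R \oL)\xrightarrow{\;\Theta^\vee\;}\Hom_\Q\bigl(H_i(U, R_m\otimes_R\cL),\Q\bigr)\xrightarrow{\;p\;}\Hom_\Q\Bigl(\tfrac{\Alex}{(t-1)^m\Alex},\Q\Bigr).
\]
The arrow \(\Theta^\vee\) is a morphism (indeed an isomorphism) of MHS by Definition~\ref{def:endowedMHSHomology}, for the MHS of Definition~\ref{def:endowedMHS} on the source and the \(\Q\)-dual of the MHS of Definition~\ref{def:endowedMHSHomology} on the target; here one uses that these spaces are finite dimensional, so each is canonically isomorphic, as a MHS, to its double \(\Q\)-dual. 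Thus it remains to prove that \(p\) is a morphism of MHS for the \(\Q\)-dual MHS on both sides.

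By Definition~\ref{def:p}, together with Proposition~\ref{prop:UfvscL} and Remark~\ref{rem:quotientVStensor}, the map \(p\) is the \(\Q\)-dual of the monomorphism
\[
\iota\colon\frac{\Alex}{(t-1)^m\Alex}\cong\frac{H_i(U,\cL)}{(t-1)^m H_i(U,\cL)}\hookrightarrow H_i(U, R_m\otimes_R\cL)
\]
induced by the projection \(\cL\twoheadrightarrow R_m\otimes_R\cL\), injectivity being Proposition~\ref{prop:TorsIntoHomology}. Since the \(\Q\)-dual of a morphism of MHS is again a morphism of MHS for the dual structures, it suffices to prove that \(\iota\) is a morphism of MHS, with the MHS of Definition-Proposition~\ref{defprop:MHSquotients} (i.e.\ Definition-Proposition~\ref{defprop:MHSquotientsN} with \(N=1\)) on the source and the MHS of Definition~\ref{def:endowedMHSHomology} on the target.

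This last step is essentially the very way the MHS on \(\frac{\Alex}{(t-1)^m\Alex}\) is built: by \cite[Corollary 6.16]{abeliancovers}, recalled in Definition-Proposition~\ref{defprop:MHSquotients}, it is the unique MHS for which \(\iota\) is a morphism of MHS — concretely, the sub-MHS of \(H_i(U, R_m\otimes_R\cL)\) carried by the submodule \(\iota\bigl(\tfrac{\Alex}{(t-1)^m\Alex}\bigr)\), which is also the quotient of the pro-MHS on \(R_\infty\otimes_R\Alex\) from Definition~\ref{def:proMHS} through which the canonical map \(R_\infty\otimes_R H_i(U,\cL)\to H_i(U, R_m\otimes_R\cL)\) factors. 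I expect the only delicate point — bookkeeping rather than Hodge theory — to be verifying that the map \(\iota\) induced by \(\cL\twoheadrightarrow R_m\otimes_R\cL\) really is the one through which that structure map factors, so that the two \emph{a priori} distinct MHS on \(R_m\otimes_R\Alex\) (one as a quotient of the pro-MHS, one as a submodule of \(H_i(U, R_m\otimes_R\cL)\)) coincide; this uses the flatness identifications of Remark~\ref{rem:torsionRinfty} and the compatibility of the projections \(R_\infty\twoheadrightarrow R_m\) and \(R_{m'}\twoheadrightarrow R_m\), and is exactly the content of \cite[Corollary 6.16]{abeliancovers}, which one simply invokes. Granting that \(\iota\) is a morphism of MHS, dualizing shows \(p\) is, and composing with \(\Theta^\vee\) finishes the argument.
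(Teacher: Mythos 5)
Your reduction coincides with the paper's: \(\Theta^\vee\) is a MHS morphism by Definition~\ref{def:endowedMHSHomology}, so everything comes down to showing that the monomorphism \(\iota\colon \frac{H_i(U,\cL)}{(t-1)^mH_i(U,\cL)}\hookrightarrow H_i(U,R_m\otimes_R\cL)\) induced by \(\cL\twoheadrightarrow R_m\otimes_R\cL\) is a MHS morphism, and then dualizing. The gap is in how you dispose of that last step. You claim that the coincidence of the two a priori different structures on \(R_m\otimes_R H_i(U^f,\Q)\) --- the quotient of the pro-MHS of Definition~\ref{def:proMHS} and the sub-MHS of \(H_i(U,R_m\otimes_R\cL)\) via \(\iota\) --- ``is exactly the content of \cite[Corollary 6.16]{abeliancovers}, which one simply invokes.'' As that corollary is used in this paper (Definition-Proposition~\ref{defprop:MHSquotients}), it only gives the existence and uniqueness of a MHS on the quotient making the quotient map a MHS morphism; it says nothing about compatibility with the MHS of Definition~\ref{def:endowedMHSHomology} on \(H_i(U,R_m\otimes_R\cL)\), which arises from a genuinely different construction (duality with \(H^i(U,R_{-m}\otimes_R\oL)\)). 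That compatibility is precisely the substance of the lemma, so deferring it to the citation is where the actual mathematical content gets skipped, and it risks circularity: within this note, the statement you are ``invoking'' is essentially the lemma itself.

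The missing step is short, and it is what the paper's proof supplies. The projection \(H_i(U,R_\infty\otimes_R\cL)\to H_i(U,R_m\otimes_R\cL)\) is a MHS morphism by the very definition of the pro-MHS (Definition~\ref{def:proMHS}: it is the projection from the inverse limit to one of its terms). Since \(t-1=s\cdot(\text{unit})\) in \(R_\infty\), this map kills \((t-1)^m\), hence factors as \(H_i(U,R_\infty\otimes_R\cL)\twoheadrightarrow \frac{H_i(U,R_\infty\otimes_R\cL)}{(t-1)^m H_i(U,R_\infty\otimes_R\cL)}\xrightarrow{\ \iota\ } H_i(U,R_m\otimes_R\cL)\), where the first arrow carries the quotient MHS (\cite[Corollaries 6.15 and 6.16]{abeliancovers}, which also identify this quotient with \(\frac{H_i(U,\cL)}{(t-1)^mH_i(U,\cL)}\) as MHS); strictness of the Hodge and weight filtrations then forces the second arrow \(\iota\) to be a MHS morphism. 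Adding this argument (together with the bookkeeping identification of \(\iota\) with the induced map, which you already flag) turns your sketch into the paper's proof; without it, the key step is asserted rather than proved.
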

\begin{proof}
    Note that $\Theta^\vee$ is a MHS morphism by Definition~\ref{def:endowedMHSHomology}. Therefore, it suffices to show that the dual map \(
    \frac{H_i(U^f, \Q)}{(t-1)^mH_i(U^f, \Q)} \to H_i(U, R_m\otimes_R \cL)
    \) to $p$ is a MHS morphism. Note that, by definition of $p$, this dual map is induced by the projection $\cL\twoheadrightarrow R_m\otimes_R\cL$.

    The canonical isomorphism \( \frac{H_i(U,\cL)}{(t-1)^mH_i(U, \cL)}
        \cong
        \frac{H_i(U,R_\infty \otimes_R \cL)}{(t-1)^mH_i(U, R_\infty \otimes_R \cL)}\) is a MHS isomorphism, by construction (\cite[Corollary 6.16]{abeliancovers}). Now, the morphism induced by the projection $R_\infty\otimes_R\cL\twoheadrightarrow R_m\otimes_R\cL$ in homology factors as:
    \[
        H_i(U, R_\infty \otimes \cL)        % \arrow[dr, twoheadrightarrow] % Mittag-Leffler
        \twoheadrightarrow
        \frac{H_i(U,R_\infty \otimes_R \cL)}{(t-1)^mH_i(U, R_\infty \otimes_R \cL)}
        % \arrow[r] % Remark 6.5
        \to
        H_i(U, R_m\otimes \cL),
    \]
    where the second map is the one that we need to show is a MHS morphism.
    
    The composition is a MHS morphism by the definition of the pro-MHS on its domain, in Definition~\ref{def:proMHS}. The first arrow is a quotient MHS by the construction in \cite[Corollary 6.15]{abeliancovers}. Therefore, the map we are interested in must also be a MHS morphism, since the Hodge and weight filtrations are strict (\cite[Corollary 3.6]{peters2008mixed}).
\end{proof}

\begin{lemma}\label{lem:pairingMHS}
    The map
$$\pairing_m\colon\mathbb{H}^i(U,\cF_m^\bullet) \to H^i(U,R_{-m}\otimes_R\oL)$$
   defined in Lemma~\ref{lem:commutes} is a MHS morphism for all $i$, where the MHS on \(H^i(U, R_{-m}\otimes_R \oL)\) is given by Definition~\ref{def:endowedMHS}, and the MHS on $\mathbb{H}^i(U,\cF_m^\bullet)$ is defined in Definition-Proposition~\ref{defprop:aux}.
\end{lemma}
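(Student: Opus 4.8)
The plan is to reduce the statement to the duality isomorphism $\bD_m$ of mixed Hodge complexes of sheaves from Theorem~\ref{thm:MHC}\eqref{part:dual}, by factoring $\pairing_m$ through $\wt\cF^\bullet_{m,0}$.

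First I would observe that, by Definition~\ref{def:pairingSheaves}, the degree‑$0$ component of $\pairing_m$ sends $g(s)\otimes a$ to $\langle\ov{g(s)},\cdot\rangle_m\otimes a$, which depends only on the class of $g(s)$ in $\frac{s^{-m}R_\infty}{R_\infty}$. Hence $\pairing_m$ factors as $\cF^\bullet_m\twoheadrightarrow\wt\cF^\bullet_{m,0}\xrightarrow{\pairing_m}R_{-m}\otimes_R\oL$ (abusing notation for the second arrow, as in Definition~\ref{def:pairingSheaves}), where the second map is $\Id_{\oL}$ tensored with the isomorphism $\frac{s^{-m}R_\infty}{R_\infty}\xrightarrow{\cong}R_{-m}$ of Remark~\ref{rem:RmDual}. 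By Definition-Proposition~\ref{defprop:aux}\eqref{part:defMHSF} the quotient map induces a MHS isomorphism on hypercohomology — indeed, this is exactly how the MHS on $\bH^i(U,\cF^\bullet_m)$ is defined — so it suffices to show that the induced map $\wt\cF^\bullet_{m,0}\to R_{-m}\otimes_R\oL$ gives a MHS morphism on $\bH^i$.

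The key step is to check that this map sits in a commutative square of complexes of sheaves on $U$
\[
\begin{tikzcd}[column sep = 4.5em, row sep = 1.4em]
\wt\cF^\bullet_{m,0}\arrow[d, "\cV_{m,0}"']\arrow[r, "\pairing_m"] & R_{-m}\otimes_R\oL\arrow[d, "\nu_{-m}"]\\
j^{-1}\sK^\bullet_m(1-m)\arrow[r, "j^{-1}\bD_m"'] & j^{-1}\sK^\bullet_{-m},
\end{tikzcd}
\]
where $\cV_{m,0}=\nu_m\circ s^m$ is the quasi-isomorphism from Definition-Proposition~\ref{defprop:aux}\eqref{part:defNu0} that defines the MHS on $\bH^i(U,\wt\cF^\bullet_{m,0})$ via $\sC^\bullet_{m,0}=\sK^\bullet_m(1-m)$, and $\nu_{-m}$ is the quasi-isomorphism of Theorem~\ref{thm:MHC}\eqref{part:nu} that defines the MHS on $H^i(U,R_{-m}\otimes_R\oL)$ in Definition~\ref{def:endowedMHS} (for the negative index $-m$), using the adjunction of Theorem~\ref{thm:MHC}\eqref{part:toX}. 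By Theorem~\ref{thm:MHC}\eqref{part:dual} one has $j^{-1}\bD_m\circ\nu_m=\nu_{-m}\circ\phi$, with $\phi$ equal to $\Id_{\oL}$ tensored with the isomorphism $R_m\cong R_{-m}$ of Remark~\ref{rem:RmDualMultiplication}; thus the square commutes if and only if $\phi\circ s^m=\pairing_m$ as maps $\frac{s^{-m}R_\infty}{R_\infty}\otimes_R\oL\to R_{-m}\otimes_R\oL$, and this is precisely the content of Remark~\ref{rem:RmDualMultiplication}, which says that postcomposing the isomorphism $\frac{s^{-m}R_\infty}{R_\infty}\cong R_{-m}$ with $R_{-m}\cong R_m$ of \cite[Example 2.60]{abeliancovers} is multiplication by $s^m$. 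Once the square is known to commute, applying $\bH^i(U,-)$ and the adjunction isomorphisms $\bH^i(U,j^{-1}\sK^\bullet_\star)\cong\bH^i(X,\sK^\bullet_\star)$ of Theorem~\ref{thm:MHC}\eqref{part:toX} identifies $\bH^i$ of our map with $\bH^i(X,\bD_m)$, which is a MHS isomorphism since $\bD_m$ is an isomorphism of mixed Hodge complexes of sheaves. Hence $\bH^i(\pairing_m)$ is a MHS morphism.

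The only delicate point is bookkeeping: one must keep straight the two identifications $\frac{s^{-m}R_\infty}{R_\infty}\cong R_{-m}$ (the residue pairing of Remark~\ref{rem:RmDual}) and $R_m\cong R_{-m}$ (from \cite[Example 2.60]{abeliancovers}), together with the Tate twist by $(1-m)$, so that $\phi\circ s^m$ genuinely coincides with $\pairing_m$. Once Remark~\ref{rem:RmDualMultiplication} is unwound this is immediate, so I do not expect a serious obstacle; in fact the argument shows that $\pairing_m$ induces a MHS \emph{isomorphism}, since it is a quasi-isomorphism and $\bD_m$ is a MHS isomorphism.
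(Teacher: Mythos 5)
Your proof is correct and follows essentially the same route as the paper: factor $\pairing_m$ through the quotient $\cF^\bullet_m\twoheadrightarrow\wt\cF^\bullet_{m,0}$ (which is a MHS isomorphism on hypercohomology by Definition-Proposition~\ref{defprop:aux}\eqref{part:defMHSF}, indeed by the very definition of the MHS on $\bH^i(U,\cF^\bullet_m)$), and then realize the remaining map by the duality isomorphism $\bD_m$ via the commutative square with $\cV_{m,0}$ and $\nu_{-m}$, using Theorem~\ref{thm:MHC}\eqref{part:dual} and the bookkeeping of Remark~\ref{rem:RmDualMultiplication}. The paper's proof checks exactly this square, so no further comment is needed.
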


\begin{proof}
    Note that the map $\pairing_m\colon\cF_m^\bullet \to R_{-m}\otimes_R\oL$ factors as composition of the quotient $\cF_m^\bullet\to \wt\cF^\bullet_{m,0}$ and the map $\pairing_m\colon\wt\cF^\bullet_{m,0}=\frac{s^{-m}R_\infty}{R_\infty}\otimes_R\oL\to R_{-m} \otimes_R\oL$ induced by the pairing $\pairing_m$ from Remark~\ref{rem:RmDual}. The quotient map induces an isomorphism of MHS in cohomology by Definition-Proposition~\ref{defprop:aux}~\eqref{part:defMHSF}. Hence, it suffices to show that the morphism induced by $\pairing_m\colon\wt\cF^\bullet_{m,0}=\frac{s^{-m}R_\infty}{R_\infty}\otimes_R\oL\to R_{-m} \otimes_R\oL$ in cohomology is a MHS morphism. We will do this by showing that $\pairing_m$ is induced by the rational part of a morphism of mixed Hodge complexes, namely by \(\bD_m\colon 
      \sC_{m,0}^\bullet = \sK^\bullet_{m}(1-m) \xrightarrow{\cong}
      \sK^\bullet_{-m}
    \) as defined in Theorem~\ref{thm:MHC}~\eqref{part:dual}. In other words, we need to see that the following diagram commutes:
    \[
      \begin{tikzcd}[row sep = 1.2em]
        \wt \cF^\bullet_{m,0} 
      \arrow[d, "\cong"', "\cV_{m,0}"]
      \arrow[r, equals]
      &
      \frac{s^{-m}R_\infty}{R_\infty} \otimes_R \oL
      \arrow[d , "\cong"', "\nu_{m}"]
      \arrow[r, "\langle\cdot {,}\cdot\rangle_m", "\cong"']
      &
      R_{-m}\otimes_R \oL
      \arrow[d, "\nu_{-m}", "\cong"']
      \\
      j^{-1}\sC^\bullet_{m,0}
      \arrow[r, equals]
      &
      j^{-1}\sK^\bullet_{m}(1-m)
      \arrow[r, "\cong"', "\bD_m"]
      &
      j^{-1}\sK^\bullet_{-m}
    \end{tikzcd}
    \]
    where $\cV_{m,0}$ is defined in Definition-Proposition~\ref{defprop:aux}~\eqref{part:defNu0}. Indeed, the diagram commutes, by Theorem~\ref{thm:MHC}~\eqref{part:dual}.
\end{proof}

\begin{lemma}\label{lem:XiMHS}
    Let $m\geq 1$. The map
$$\wt\Xi\colon\cF_m^\bullet \to R_\infty\otimes_R\oL[1]$$ from Definition~\ref{def:wtXi} induces (pro)-MHS morphisms in cohomology
$$
\wt\Xi\colon\mathbb{H}^i(U,\cF_m^\bullet) \to R_\infty\otimes_RH^{i+1}(U,\oL)
$$
for all $i$, where the MHS on $\mathbb{H}^i(U,\cF_m^\bullet)$ is defined in Definition-Proposition~\ref{defprop:aux} and the pro-MHS on  $R_\infty\otimes_RH^{i+1}(U,\oL)$ is defined in Definition~\ref{def:proMHS}.
%given by
%\[
      %R_\infty \otimes_R H^{i+1}(U, \oL) \cong %H^{i+1}(U, R_\infty \otimes_R \oL) \cong %\varprojlim_{m'} H^{i+1}(U, R_{m'} \otimes_R \oL)
   % \]
%and the MHS on $H^{i+1}(U, R_{m'} \otimes_R \oL)$ for all $m'\geq 1$ from . The second isomorphism comes from the obvious maps for every \(m'\), and it is proven to be an isomorphism in \cite[Corollary 2.28]{abeliancovers}. All the spaces \(H^{i+1}(U,R_{m'}\otimes_R \oL)\) have compatible MHS's (with respect to the projections $R_{m''}\twoheadrightarrow R_{m'}$ for all $m''\geq m'\geq 1$) induced by the quasi-isomorphisms $\nu_{m'}$ between \(R_{m'}\otimes_R \oL\) and \(j^{-1}\sK^\bullet_{m'}\).
\end{lemma}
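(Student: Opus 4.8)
The plan is to realize $\wt\Xi$ as (up to sign) the connecting morphism of a mapping cone and to transport it, level by level in $m'$, to the connecting morphism of the mixed cone $\sC^\bullet_{m,m'}$, which by Theorem~\ref{thm:mixedCone} is the rational part of a morphism of mixed Hodge complexes of sheaves.

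First I would observe that $\cF_m^\bullet=\Cone^\bullet(\iota)$, where $\iota\colon R_\infty\otimes_R\oL\hookrightarrow s^{-m}R_\infty\otimes_R\oL$ is the inclusion, and that under this identification $\wt\Xi$ is minus the projection $\cF_m^\bullet\to(R_\infty\otimes_R\oL)[1]$ from Definition-Proposition~\ref{def:mappingCone}. Likewise, for each $m'\geq 1$ one has $\wt\cF^\bullet_{m,m'}=\Cone^\bullet(\iota_{m'})$ with $\iota_{m'}\colon R_{m'}\otimes_R\oL\hookrightarrow\frac{s^{-m}R_\infty}{s^{m'}R_\infty}\otimes_R\oL$ and connecting morphism $c_{m'}\colon\wt\cF^\bullet_{m,m'}\to(R_{m'}\otimes_R\oL)[1]$; moreover the quotient $q_{m'}\colon\cF_m^\bullet\to\wt\cF^\bullet_{m,m'}$ of Remark~\ref{rem:quotientsFm} is the morphism of cones induced by the pair of projections $R_\infty\otimes_R\oL\twoheadrightarrow R_{m'}\otimes_R\oL$ and $s^{-m}R_\infty\otimes_R\oL\twoheadrightarrow\frac{s^{-m}R_\infty}{s^{m'}R_\infty}\otimes_R\oL$. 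Functoriality of the mapping cone and of its connecting morphism then gives $\bH^i(U,r_{m'})\circ\bH^i(U,\wt\Xi)=-\,\bH^i(U,c_{m'})\circ\bH^i(U,q_{m'})$, where $r_{m'}\colon(R_\infty\otimes_R\oL)[1]\to(R_{m'}\otimes_R\oL)[1]$ is the shift of the projection.

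Next, fixing $m'\geq 1$, I would apply the same functoriality to diagram~\eqref{eq:Nu}. By Definition-Proposition~\ref{defprop:aux}~\eqref{part:coneColumn} the first and last columns of~\eqref{eq:Nu} have cones $\wt\cF^\bullet_{m,m'}$ and $j^{-1}\sC^\bullet_{m,m'}$, and by part~\eqref{part:defNu} the morphism of cones they induce is exactly $\cV_{m,m'}$; functoriality of the connecting morphism then yields the relation $(j^{-1}c)\circ\cV_{m,m'}=\nu_{m'}[1]\circ c_{m'}$, where $c\colon\sC^\bullet_{m,m'}\to\sK^\bullet_{m'}(1)[1]$ is the connecting morphism of the mixed cone. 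By Theorem~\ref{thm:mixedCone} (equivalently, Remark~\ref{rem:lesMixedCone}), $c$ is the rational part of a morphism of mixed Hodge complexes of sheaves onto $\sK^\bullet_{m'}(1)[1](-1)=\sK^\bullet_{m'}[1]$; the net Tate twist is zero, which is exactly what is needed, since the MHS on $H^{i+1}(U,R_{m'}\otimes_R\oL)$ is the one transported from $\bH^{i+1}(X,\sK^\bullet_{m'})(1)=\bH^i(X,\sK^\bullet_{m'}[1])$ via $\nu_{m'}[1]$, $Rj_*$ and the adjunction isomorphism (Definition~\ref{def:endowedMHS}), while the MHS on $\bH^i(U,\wt\cF^\bullet_{m,m'})$ is the one transported from $\bH^i(X,\sC^\bullet_{m,m'})$ via $\cV_{m,m'}$, $Rj_*$ and the adjunction isomorphism (Definition-Proposition~\ref{defprop:aux}~\eqref{part:defMHSquotient}). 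Combining these identifications with naturality of $Rj_*$ and of the adjunction, $c_{m'}$ induces a MHS morphism $\bH^i(U,\wt\cF^\bullet_{m,m'})\to H^{i+1}(U,R_{m'}\otimes_R\oL)$ for all $i$.

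To conclude, $\bH^i(U,q_{m'})$ is an isomorphism of MHS by Definition-Proposition~\ref{defprop:aux}~\eqref{part:defMHSF}, so the identity of the second paragraph exhibits $\bH^i(U,r_{m'})\circ\bH^i(U,\wt\Xi)$ as a composition of MHS morphisms, for every $m'\geq 1$. Since the pro-MHS on $R_\infty\otimes_R H^{i+1}(U,\oL)\cong\varprojlim_{m'}H^{i+1}(U,R_{m'}\otimes_R\oL)$ of Definition~\ref{def:proMHS} has the maps $\bH^i(U,r_{m'})$ as its structure maps, and a map into such an inverse limit is a (pro-)MHS morphism exactly when each of its compositions with a structure map is, we conclude that $\wt\Xi$ induces a morphism of pro-MHS in cohomology. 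I expect the only delicate point to be the bookkeeping of shifts and Tate twists in the third paragraph---in particular that the connecting morphism of $\sC^\bullet_{m,m'}$ carries net-zero twist onto $\sK^\bullet_{m'}[1]$, matching the shift and twist in the MHS on $H^{i+1}(U,R_{m'}\otimes_R\oL)$---but this is all contained in the cited statements, so the substantive content is the two functoriality-of-cone observations.
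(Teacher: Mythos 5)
Your proposal is correct and follows essentially the same route as the paper: reduce via the quotient maps to the finite levels $\wt\cF^\bullet_{m,m'}$, identify the resulting map with the connecting morphism $c$ of the mixed cone $\sC^\bullet_{m,m'}$ through $\cV_{m,m'}$ and $\nu_{m'}$ (with the Tate twists cancelling as in Remark~\ref{rem:lesMixedCone} and Definition~\ref{def:endowedMHS}), and handle the sign since negatives of MHS morphisms are MHS morphisms. The only difference is cosmetic: where you invoke functoriality of the mapping cone and its connecting morphism applied to diagram~\eqref{eq:Nu}, the paper defines $\wt\Xi_{m'}$ explicitly and checks the corresponding square by direct verification in degrees $-1$ and $0$.
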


\begin{proof}
MHS morphisms between two given $\Q$-MHS form a \(\Q\)-vector space, so we can equivalently show that \(-\wt \Xi\) induces a pro-MHS morphism. Let $c: \sC^\bullet_{m,m'} \to \sK^\bullet_{m'}[1]$ be the morphism of mixed Hodge complexes from Remark~\ref{rem:lesMixedCone} for all $m'\geq 0$.
%The following diagram is commutative for all $m''\geq m\geq 1$:
%    $$
%    \begin{tikzcd}
%    \sC_{m,m''}^\bullet\arrow[r, c]\arrow[d, two heads] & \sK_{m''}^\bullet[1]\arrow[d, two heads]\\
%    \sC_{m,m'}^\bullet\arrow[r]& \sK_{m'}^\bullet[1]
%    \end{tikzcd}
%    $$
%    Here, the vertical arrows are the natural projections, which were shown to be the rational part of morphisms of mixed Hodge complexes of sheaves in the proof of Definition-Proposition~\ref{defprop:aux}~\ref{part:defMHSF}.

Let us define the morphism
    $$
    \wt \Xi_{m'} \colon \wt\cF^\bullet_{m,m'}\to R_{m'}\otimes_R\cL[1]
    $$
    as the identity in degree $-1$ and vanishing in degree $0$. Since the diagram
    $$
    \begin{tikzcd}
    \cF^\bullet_m\arrow[r, "-\wt \Xi "]\arrow[d, two heads,"\text{projection}"] & R_{\infty}\otimes_R\cL[1]\arrow[d, two heads, "\text{projection}"]\\
    \wt\cF^\bullet_{m,m'}\arrow[r, "\wt \Xi_{m'}"] & R_{m'}\otimes_R\cL[1]
    \end{tikzcd}
    $$
    commutes, it suffices to show that $\wt \Xi_{m'}$ is a MHS morphism for all $m'\geq 1$, where the MHS on the cohomology of $R_{m'}\otimes_R\oL$ is the one from Definition~\ref{def:endowedMHS}. Indeed, $\mathbb{H}^i(U,\cF_m^\bullet)$ is finite dimensional, so its image through \(\wt \Xi \) is also finite dimensional. This amounts to showing that the diagram

    $$
    \begin{tikzcd}
       \wt\cF^\bullet_{m,m'}\arrow[r, "\wt \Xi_{m'}"]\arrow[d, "\cV_{m,m'}"] & R_{m'}\otimes_R\cL[1]\arrow[d, "\nu_{m'}"]\\
       \sC_{m,m'}^\bullet \arrow[r, "c"]& \sK_{m'}[1]
    \end{tikzcd}
    $$
    commutes, which is a straightforward verification in degrees $-1$ and $0$ which follows immediately from applying the definition of $\mathcal{V}_{m,m'}$ in Definition-Proposition~\ref{defprop:aux}~\eqref{part:defNu}, the definition of $c$ from Remark~\ref{rem:lesMixedCone} and the definition of $\wt \Xi_{m'}$.
\end{proof}

\begin{lemma}\label{lem:bottomInclusion}
 The inclusion
$$\Tors_R H^{i+1}(U,\oL)_1\to R_\infty\otimes_R H^{i+1}(U,\oL)$$
   is a MHS morphism for all $i$, where the MHS on the domain (resp. the target) is given in Theorem \ref{thm:tors1cohomology} (resp. Definition \ref{def:proMHS}).
\end{lemma}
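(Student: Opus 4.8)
The plan is to reduce the statement, through the pro-structure on the target, to the already-established Theorem~\ref{thm:tors1cohomology}. Recall from Definition~\ref{def:proMHS} that $R_\infty\otimes_R H^{i+1}(U,\oL)$ carries its (pro-)MHS via the isomorphism with $\varprojlim_m H^{i+1}(U,R_m\otimes_R\oL)$, and that, by Theorem~\ref{thm:MHC}~\eqref{part:inclusionProjection} together with the identifications of Definition~\ref{def:endowedMHS} (all of which use the same cohomological shift for indices $m\ge 1$), the transition maps $H^{i+1}(U,R_{m'}\otimes_R\oL)\to H^{i+1}(U,R_m\otimes_R\oL)$ for $m'\ge m\ge 1$ are MHS morphisms. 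First I would record that the natural maps $R_\infty\otimes_R H^{i+1}(U,\oL)\to H^{i+1}(U,R_m\otimes_R\oL)$ are therefore morphisms of (pro-)MHS, and that, since the source $\Tors_R H^{i+1}(U,\oL)_1$ of our inclusion is a single finite-dimensional MHS, it suffices to check that the composition of the inclusion with each of these natural maps is a MHS morphism; because the transition maps are MHS morphisms, it is in fact enough to do so for $m$ ranging over a cofinal set of indices.

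Next I would fix $m\gg 1$ (large enough that the embeddings~\eqref{eq:commutetensorhomologyTors} of Proposition~\ref{prop:TorsIntoHomology} are available, as in Construction~\ref{con:mhsalexander}) and identify the composition explicitly. Using the flatness isomorphism $R_\infty\otimes_R H^{i+1}(U,\oL)\cong H^{i+1}(U,R_\infty\otimes_R\oL)$ of Remark~\ref{rem:torsionRinfty}, the natural map to $H^{i+1}(U,R_m\otimes_R\oL)$ is induced by the sheaf projection $R_\infty\otimes_R\oL\twoheadrightarrow R_m\otimes_R\oL$, and the inclusion $\Tors_R H^{i+1}(U,\oL)_1\hookrightarrow R_\infty\otimes_R H^{i+1}(U,\oL)$ factors through $H^{i+1}(U,\oL)\to R_\infty\otimes_R H^{i+1}(U,\oL)$ (the effect on $H^{i+1}$ of $\oL\to R_\infty\otimes_R\oL$). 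Composing, the map under scrutiny becomes $\Tors_R H^{i+1}(U,\oL)_1\hookrightarrow H^{i+1}(U,\oL)\to H^{i+1}(U,R_m\otimes_R\oL)$ with the last arrow induced by $\oL\twoheadrightarrow R_m\otimes_R\oL$; by the (purely homological-algebraic) proof of Proposition~\ref{prop:TorsIntoHomology}, this is exactly the composition of the monomorphisms~\eqref{eq:commutetensorhomologyTors} and~\eqref{eq:commutetensorhomology} in degree $i+1$, that is, the inclusion $\Tors_R H^{i+1}(U,\oL)_1\hookrightarrow H^{i+1}(U,R_m\otimes_R\oL)$ appearing in Theorem~\ref{thm:tors1cohomology}. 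That theorem asserts precisely that this is a MHS morphism, for the Definition~\ref{def:endowedMHS}-MHS on the target (the one used in Definition~\ref{def:proMHS}) and the Theorem~\ref{thm:tors1cohomology}-MHS on the source (the one in the present statement). Combining with the first step finishes the argument.

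The only genuine work is the diagram-chase in the second step: confirming that the composition through $R_\infty\otimes_R H^{i+1}(U,\oL)$ literally reproduces the map of Theorem~\ref{thm:tors1cohomology} and that the various MHS in play are the intended ones; there is no new geometric input, everything being bookkeeping on top of Proposition~\ref{prop:TorsIntoHomology} and Theorem~\ref{thm:tors1cohomology}. The subtlest point is the interchange of $R_\infty\otimes_R(-)$ with cohomology of $\oL$ (Remark~\ref{rem:torsionRinfty}), which is what lets one see that the inclusion in the statement factors through $H^{i+1}(U,\oL)$ and hence matches, after projection to $R_m\otimes_R\oL$, the maps in~\eqref{eq:commutetensorhomologyTors}–\eqref{eq:commutetensorhomology}.
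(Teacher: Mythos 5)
Your proposal is correct and follows essentially the same route as the paper: reduce, via the inverse-limit definition of the pro-MHS on $R_\infty\otimes_R H^{i+1}(U,\oL)$, to showing that the composition with the projection to $H^{i+1}(U,R_m\otimes_R\oL)$ for $m\gg 1$ is a MHS morphism, and then recognize that composition as exactly the inclusion of Theorem~\ref{thm:tors1cohomology}. The paper's proof is just a terser version of yours, leaving the identification of the composed map implicit.
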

\begin{proof}
By Definition \ref{def:proMHS} of the pro-MHS on \(R_\infty\otimes_R H^{i+1}(U,\oL)\) as an inverse limit, it suffices to show that for any large enough \(m'\), \(\Tors_R H^{i+1}(U,\oL)_1\to H^{i+1}(U,R_{m'}\otimes_R \oL)\) is a MHS morphism. This is the content of Theorem \ref{thm:tors1cohomology}.

%The inclusion $\Tors_R H^{i+1}(U,\oL)_1\to R_\infty\otimes_R H^{i+1}(U,\oL)$ can be interpreted as a series of inclusions $\Tors_R H^{i+1}(U,\oL)_1\to R_\infty\otimes_R H^{i+1}(U,R_{m'}\otimes_R\cL)$ for large enough $m'$, using the isomorphism $R_\infty\otimes_R \oL\cong \varprojlim_{m'} R_{m'} \otimes_R \oL$ and the fact that $R_\infty$ is flat over $R$ \eva{check if this agrees with whatever is new in the intro}. By \cite[Corollary 4.3]{EvaMoises}, the MHS on \(\Tors_R H^{i+1}(U, \oL)_1\) can be also defined as a substructure of the MHS on $H^{i+1}(U,R_{m'}\otimes_R\oL)$ for (any) large enough $m'$ through the inclusion we just described, where $H^{i+1}(U,R_{m'}\otimes_R\oL)$ is endowed with the MHS described in \cite[Theorem 5.24]{mhsalexander}. The result follows from the fact that the MHS on $H^{i+1}(U,R_{m'}\otimes_R\oL)$ from \cite[Theorem 5.24]{mhsalexander} coincides with the one from Theorem~\ref{thm:MHC} by \cite[Remark 10.12]{abeliancovers}.
\end{proof}

  We are now ready to prove the main theorem of this paper.
  \begin{proof}[Proof of Theorem~\ref{thm:main}]
  First, by Proposition-Definition~\ref{defprop:MHSquotients}
, all the quotient maps of the form
\[
  \displaystyle\frac{H_i(U^f, \Q)}{(t-1)^{m'}H_i(U^f, \Q)}\to \frac{H_i(U^f, \Q)}{(t-1)^mH_i(U^f, \Q)}
\]
are MHS morphisms. Therefore, we may assume that \(m\gg 1\).

  By Theorem~\ref{thm:eigenspaces},  \(\TorsH\) is a direct sum of MHS:
  \[
  \TorsH = \TorsH_1 \oplus
  \Big(
  \bigoplus_{\lambda\neq 1} \TorsH_\lambda
  \Big).
  \]
  Note that the restriction of the composition~\eqref{eq:mainThm} to the second summand vanishes. Therefore, it suffices to show that the restriction to \( \TorsH_1 \to \frac{\Alex}{(t-1)^m\Alex}
  \) is a MHS morphism. Equivalently, it suffices to show that its dual \(\Xi\) is a MHS morphism. By Lemma~\ref{lem:commutes}, it suffices to show that every other map except for \(\Xi\) in the commutative diagram~\eqref{eq:bigComposition} is a MHS morphism, which follows from Definition~\ref{def:endowedMHSHomology},  Lemmas~\ref{lem:quotientSideMHS}, \ref{lem:pairingMHS}, \ref{lem:XiMHS}, and \ref{lem:bottomInclusion} and Theorem~\ref{thm:tors1homology}.
\end{proof}

\begin{corollary}\label{cor:mainN}
  Let \(U\), \(f\) and \(U^f\) be as in Definition \ref{def:Uf}. Let \(m, N\ge 1\). Let \(R = \Q[\pi_1(\C^*)]\cong \Q[t^{\pm 1}]\). Consider the MHS on \(\Tors_R H_i(U^f,\Q)\) from Definition~\ref{def:mhsalexander}, and the MHS on \(\frac{H_i(U^f, \Q)}{(t^N-1)^m H_i(U^f, \Q)}\) from Definition-Proposition~\ref{defprop:MHSquotientsN}. Since they are a submodule and a quotient of the Alexander module, respectively, there is a natural composition map:
  \[
    \Tors_R H_i(U^f, \Q) \hookrightarrow H_i(U^f, \Q) \twoheadrightarrow \frac{H_i(U^f, \Q)}{(t^N-1)^mH_i(U^f, \Q)}.
  \]
  This composition map is a MHS morphism for all $m, N\geq 1$. Moreover, there exists $N\in \Z_{\geq 1}$ such that this composition map is injective for $m\gg 1$.
\end{corollary}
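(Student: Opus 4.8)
The plan is to reduce the ``MHS morphism'' assertion to the already established Theorem~\ref{thm:main} by passing to a finite cyclic cover, exactly along the lines of Construction~\ref{con:mhsalexander}, and to deduce the injectivity assertion from a short computation in the module theory of the PID $R$.

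For the first assertion I would start by fixing, via \cite[Proposition 2.24]{mhsalexander} (based on \cite[Proposition 4.1]{BudurLiuWang}), an integer $N_0\ge 1$ such that for every $i$ and every $m\gg 1$ the element $(t^{N_0}-1)^m$ annihilates $\Tors_R H_i(U^f,\Q)$ --- equivalently, every $t$-eigenvalue of the torsion is an $N_0$-th root of unity. Given an arbitrary $N\ge 1$, put $M=\operatorname{lcm}(N,N_0)$, so that $N\mid M$ and $N_0\mid M$ (and hence $(t^M-1)^m$ also annihilates the torsion for $m\gg 1$), and pass to the degree-$M$ cyclic cover $U_M\to U$ and its associated infinite cyclic cover $(U_M)^{f_M}\to U_M$; write $t_M=t^M$, $R_M=\Q[t_M^{\pm 1}]$, and let $\theta_M\colon (U_M)^{f_M}\xrightarrow{\cong}U^f$ be the isomorphism of complex analytic varieties of \cite[Lemma 2.26]{mhsalexander}. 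Since $\theta_M$ induces an isomorphism of $R_M$-modules on each $H_i$ carrying $\Tors_{R_M}(-)$ onto $\Tors_R(-)$ and $(t_M-1)^m(-)$ onto $(t^M-1)^m(-)$, it produces a commutative ladder whose top row is the composition map of Theorem~\ref{thm:main} for $(U_M,f_M)$ (deck generator $t_M$, exponent $m$) and whose bottom row is the composition map for $U^f$ with $(t^M-1)^m$. The crucial step is to check that the two outer verticals of this ladder are MHS isomorphisms: on the torsion this is precisely how the MHS of Definition~\ref{def:mhsalexander} is produced in Construction~\ref{con:mhsalexander} (using that $N_0\mid M$ places $\Tors_{R_M}H_i((U_M)^{f_M},\Q)$ entirely in the eigenvalue-$1$ part for $t_M$, and that this MHS is canonical, hence independent of which admissible cover is used); on the quotients it is built into the construction of the MHS of Definition-Proposition~\ref{defprop:MHSquotientsN} in \cite[Propositions 8.3, 8.4]{abeliancovers}, which proceeds through this same cover. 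Granting this, Theorem~\ref{thm:main} applied to $(U_M,f_M)$ makes the top row, hence the bottom row, a MHS morphism for all $m\ge 1$; and since $N\mid M$ gives $(t^M-1)^mH_i(U^f,\Q)\subseteq (t^N-1)^mH_i(U^f,\Q)$, composing the bottom row with the natural surjection onto $\frac{H_i(U^f,\Q)}{(t^N-1)^mH_i(U^f,\Q)}$ --- a MHS morphism by Definition-Proposition~\ref{defprop:MHSquotientsN} --- recovers the composition map in the statement, which is therefore a MHS morphism for all $m,N\ge 1$.

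For the injectivity clause I would simply take $N=N_0$ and $m\gg 1$ so that $(t^{N_0}-1)^m$ kills $T\coloneqq\Tors_R H_i(U^f,\Q)$. Using that $R$ is a PID, write $H_i(U^f,\Q)=T\oplus F$ with $F$ free; then $(t^{N_0}-1)^mH_i(U^f,\Q)=(t^{N_0}-1)^mF\subseteq F$, so $T\cap (t^{N_0}-1)^mH_i(U^f,\Q)=0$. Since this intersection is exactly the kernel of $T\hookrightarrow H_i(U^f,\Q)\twoheadrightarrow \frac{H_i(U^f,\Q)}{(t^{N_0}-1)^mH_i(U^f,\Q)}$, that composition is injective, which is all that is required.

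I expect the main obstacle to be bookkeeping rather than conceptual: one must verify that the MHS of Construction~\ref{con:mhsalexander} on $\Tors_R H_i(U^f,\Q)$ and the MHS of Definition-Proposition~\ref{defprop:MHSquotientsN} on $\frac{H_i(U^f,\Q)}{(t^N-1)^mH_i(U^f,\Q)}$ genuinely coincide with the ones obtained by transporting, along $\theta_M$, the MHS of Definition~\ref{def:mhsalexander} and Definition-Proposition~\ref{defprop:MHSquotients} for $(U_M,f_M)$ --- and, relatedly, that the torsion MHS does not depend on the admissible cover chosen. These compatibilities are implicit in \cite{mhsalexander,abeliancovers}; once they are recorded, the rest is a routine diagram chase with $\theta_M$ together with the elementary $R$-module computation above.
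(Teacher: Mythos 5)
Your proposal is correct and follows essentially the same route as the paper: reduce to Theorem~\ref{thm:main} by passing to a finite cover of degree a common multiple $M$ of $N$ and the unipotency level (the paper takes $M=kN$, you take $\operatorname{lcm}(N,N_0)$ — the same idea), transport along $\theta_M$ using the compatibilities built into Construction~\ref{con:mhsalexander} and Definition-Proposition~\ref{defprop:MHSquotientsN}, and finish with the projection to the $(t^N-1)^m$-quotient. Your explicit PID argument for the injectivity clause is exactly what the paper leaves implicit when it cites the first sentence of Construction~\ref{con:mhsalexander}.
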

\begin{proof}
The ``moreover'' part of the statement is a consequence of the first sentence in Construction~\ref{con:mhsalexander}.

Now, let $N\geq 1$, and notice that the previous sentence implies that there exists $k\geq 1$ such that $\Tors_R H_i(U^f,\Q)$ is annihilated by a big enough power of $t^{kN}-1$ for all $i\geq 0$. Let $M=kN$. Consider the degree $M$ covering space $U_M$ of $U$, the map $f_M:U_M\to\C^*$ and the isomorphism of complex analytic varieties $\theta_M:(U_M)^{f_M}\to U^f$ as in Construction~\ref{con:mhsalexander}. We have he following commutative diagram:
    $$
    \begin{tikzcd}
      \Tors_{\Q[t^{\pm M}]} H_i\left((U_M)^{f_M},\Q\right)_1\arrow[r,hook]\arrow[dd,"\cong", "\theta_M"']&H_i\left((U_M)^{f_M},\Q\right)\arrow[r,two heads]\arrow[dd,"\cong", "\theta_M"'] & \frac{H_i\left((U_M)^{f_M},\Q\right)}{(t^M-1)^mH_i\left((U_M)^{f_M},\Q\right)}\arrow[d,two heads, , "\theta_M"']\\
      & & \frac{H_i(U^f, \Q)}{(t^M-1)^m H_i(U^f, \Q)}\arrow[d,two heads, "\text{projection}"]\\
       \Tors_RH_i(U^f, \Q)\arrow[r, hook] & H_i(U^f, \Q) \arrow[r, two heads]& \frac{H_i(U^f, \Q)}{(t^N-1)^m H_i(U^f, \Q)}
    \end{tikzcd}
    $$
    The composition of the maps in the top row is a MHS morphism by Theorem~\ref{thm:main}. The vertical arrow on the left is a MHS isomorphism by the definition of the MHS on $\Tors_RH_i(U^f, \Q)$ in \cite{mhsalexander} (see Construction \ref{con:mhsalexander}). Similarly, the vertical arrow induced by $\theta_M$ on the right is a MHS morphism by definition of the MHS on $\frac{H_i(U^f, \Q)}{(t^M-1)^m H_i(U^f, \Q)}$ from Definition-Proposition~\ref{defprop:MHSquotientsN} (made explicit in \cite[Proposition 8.3]{abeliancovers}). The arrow labeled by ``projection'' is a MHS morphism by Definition-Proposition~\ref{defprop:MHSquotientsN} as well. The result now follows from the commutativity of the diagram.
\end{proof}

\bibliographystyle{plain}
\bibliography{Bibliography}
\end{document}